\newtheorem{Theorem}{Theorem}
\newtheorem{theorem}{Theorem}[section]
\newtheorem{proposition}[theorem]{Proposition}
\newtheorem{lemma}[theorem]{Lemma}
\newtheorem{corollary}[theorem]{Corollary}
\theoremstyle{definition}
\renewenvironment{proof}{\noindent {\bf Proof:}}{\hfill $\Box$ \vspace{2 ex}}
\newtheorem{Remark}[Theorem]{Remark}
\newtheorem*{theorem*}{Theorem}
\newtheorem*{proposition*}{Proposition}
\newtheorem*{lemma*}{Lemma}
\newtheorem{definition}[theorem]{Definition}
\theoremstyle{remark}
\newcommand\nc{\newcommand}
\nc{\on}{\operatorname}
\nc\renc{\renewcommand}
\nc{\BR}{\mathbb R}
\nc{\BC}{\mathbb C}
\nc{\BQ}{\mathbb Q}
\nc{\BZ}{\mathbb Z}
\nc{\BN}{\mathbb N}
\nc{\BP}{\mathbb P}
\nc{\BA}{\mathbb A}
\nc{\Hom}{\on{Hom}}
\nc{\wt}{\widetilde}
\nc{\vspan}{\on{span}}
\nc{\ord}{\on{ord}}
\nc{\im}{\on{im}}
\nc{\Mat}{\on{Mat}}
\nc{\can}{\on{can}}
\nc{\coker}{\on{coker}}
\nc{\ev}{\on{ev}}
\nc{\Tr}{\on{Tr}}
\nc{\End}{\on{End}}
\nc{\swap}{\on{swap}}
\nc{\Set}{\on{Set}}
\nc{\bC}{{\bm C}}
\nc{\bc}{{\bm c}}
\nc{\bD}{{\bm D}}
\nc{\bd}{{\bm d}}
\nc{\bE}{{\bm E}}
\nc{\be}{{\bm e}}
\nc{\bff}{{\bm f}}
\nc{\Bf}{\mathbb{f}}
\nc{\adj}{\on{adj}}
\nc{\tensor}[3]{#1 \underset{#2}\otimes #3}
\nc{\Nat}{\on{Nat}}
\nc{\op}{\on{op}}
\nc{\funct}{\on{funct}}
\nc{\Ob}{\on{Ob}}
\nc{\fR}{\mathfrak{R}}
\nc{\Vect}{\on{Vect}}
\nc{\ns}{\on{non-spec}}
\nc{\ol}{\overline}
\nc{\ul}{\underline}
\nc{\w}{\omega}
\nc{\nlog}{\on{nlog}}
\nc{\Aut}{\on{Aut}}
\nc{\Gal}{\on{Gal}}
\nc{\Poss}{\on{Poss}}
\nc{\ksep}{\on{sep}}
\nc{\low}{\on{low}}
\nc{\Stab}{\on{Stab}}
\nc{\pp}{\mathfrak{p}}
\nc{\OO}{\mathcal{O}}
\nc{\mm}{\mathfrak{m}}
\nc{\qq}{\mathfrak{q}}
\nc{\Nm}{\on{Nm}}
\nc{\Ann}{\on{Ann}}
\nc{\gug}{\mathfrak{g}}
\nc{\hug}{\mathfrak{h}}
\nc{\mf}{\mathfrak}
\nc{\mc}{\mathcal}
\nc{\Sym}{\on{Sym}}
\renc{\O}{\mc{O}}
\nc{\al}{\alpha}
\def\Z{{\mathbb Z}}
\def\I{{\mathbb I}}
\def\End{{\rm End}}
\def\Sym{{\rm Sym}}
\def\SL{{\rm SL}}
\def\GL{{\rm GL}}
\def\Stab{{\rm Stab}}
\def\Sym{{\rm Sym}}
\def\Cl{{\rm Cl}}
\def\O{{\mathcal O}}
\def\cO{{\mathcal O}}
\def\SO{{\rm SO}}
\def\P{{\mathbb P}}
\def\fin{{\rm fin}}
\def\adj{{\rm adj}}
\def\gb{{\rm gb}}
\def\Aut{{\rm Aut}}
\def\gen{{\rm gen}}
\def\ngen{{\rm ngen}}
\def\rr{{\rm rr}}
\def\red{{\rm red}}
\def\red{{\rm red}}
\def\prim{{\rm prim}}
\def\Vol{{\rm Vol}}
\def\R{{\mathbb R}}
\def\F{{\mathbb F}}
\def\FF{{\mathcal F}}
\def\Q{{\mathbb Q}}
\def\cB{{\mathcal B}}
\def\cI{{\mathcal I}}
\def\I{{\mathcal I}}
\def\Z{{\mathbb Z}}
\def\P{{\mathbb P}}
\def\F{{\mathbb F}}
\def\Q{{\mathbb Q}}
\def\C{{\mathbb C}}
\def\Res{{\rm{Res}}}
\def\@tocline#1#2#3#4#5#6#7{\relax
  \ifnum #1>\c@tocdepth
  \else
    \par \addpenalty\@secpenalty\addvspace{#2}
    \begingroup \hyphenpenalty\@M
    \@ifempty{#4}{
      \@tempdima\csname r@tocindent\number#1\endcsname\relax
    }{
      \@tempdima#4\relax
    }
    \parindent\z@ \leftskip#3\relax \advance\leftskip\@tempdima\relax
    \rightskip\@pnumwidth plus4em \parfillskip-\@pnumwidth
    #5\leavevmode\hskip-\@tempdima
      \ifcase #1
       \or\or \hskip 1em \or \hskip 2em \else \hskip 3em \fi
      #6\nobreak\relax
    \dotfill\hbox to\@pnumwidth{\@tocpagenum{#7}}\par
    \nobreak
    \endgroup
  \fi}
\begin{document}

\title{Secondary terms in the counting functions of quartic fields II}

\author{Arul Shankar and Jacob Tsimerman}

\maketitle

\abstract{We determine the smoothed counts of $S_4$-quartic fields with bounded discriminant, satisfying any finite specified set of local conditions, as the sum of two main terms with a power saving error term. We also prove an analogous result for quartic rings (weighted by the number of cubic resolvents), deducing as a consequence that the Shintani zeta functions associated to the prehomogeneous vector space $\C^2\otimes\Sym^2(\C^3)$ have at most a simple pole at~$s=5/6$.}

\section{Introduction}

In the first article \cite{ST_second_main_term_1} of this two-part series, we carried out the ``smooth count'' of quartic $S_4$-fields, in certain natural families, with two main terms of sizes $X$ and $X^{5/6}$. Our main result there was restricted to what we called {\it $S_4$-families}, which are families of quartic fields, satisfying a finite set of local conditions, which ensures that every field in the family is a quartic $S_4$-field. The purpose of this article is to remove this restriction, and provide a smooth count for families of quartic fields satisfying any finite set of local conditions.\footnote{In our main result in this article, we evaluate smoothed counts of $S_4$-fields in {\it all} congruence families of quartic fields. Since asymptotics for non-$S_4$-quartic fields are already known (with an error term of $o(X^{5/6})$), our result can be used to deduce smoothed counts for the family of all quartic fields in any congruence family with two main terms.}

To state our main result, we need some notation. Let $S$ be a finite set of places, and let $\Sigma=(\Sigma_v)_{v\in S}$ be a {\it finite collection of local specifications for quartic fields}, where for each $v\in S$, $\Sigma_v$ is a finite set of \'etale quartic extensions of $\Q_v$. We let $F(\Sigma)$ denote the family of $S_4$-quartic fields $K$ such that $K\otimes\Q_v\in\Sigma_v$ for every $v\in S$.
Let $\psi:\R_{\geq 0}\to \R_{\geq 0}$ be a smooth function with compact support, and define the ``smooth count'' of quartic fields in $F(\Sigma)$ by
\begin{equation}\label{eq:fields_count}
N_{\Sigma}(\psi,X):=\sum_{K\in F(\Sigma)}\psi\Bigl(\frac{|\Delta(K)|}{X}\Bigr).
\end{equation}
For a principal ideal domain $R$, Bhargava's landmark work \cite{BHCL3} gives a parametrization of the set of quartic rings $Q$ over $R$ along with a cubic resolvent ring $C$ of $Q$. Given $Q$ and $C$, there is a natural quadratic resolvent map $r:Q/R\to C/R$. In this paper we work with triples $(Q,C,r)$. Given such a triple over $R$, and given an element $x\in (C/R)^\vee$, we obtain (by composition with $r$) a quadratic form $Q/R\to R$, which we denote by $r_x$. Specialize to the case $R=\Q_p$ for some $p$. For an \'etale quartic extension $K$ of $\Q_p$, Bhargava proves that the ring of integers $\O_K$ of $K$ has a unique cubic resolvent $C_K$. We say that $(\O_K,C_K,r_K)$ is the {\it triple corresponding to $K$}, we denote the quadratic form corresponding to an element $x$ in $(C_K/\Z_p)^\vee$ by $r_{K,x}$, and the set of primitive elements in $(C_K/Z_p)^\vee$ by $(C_K/\Z_p)^\vee_\prim$. Define the constants
\begin{equation}\label{eq:M_def}
\mathcal M:= \frac{2^{5/3}\Gamma(1/6)\Gamma(1/2)}{\sqrt{3}\pi\Gamma(2/3)};\;\;
\mathcal M_i:=\left\{
\begin{array}{rcl}
\mathcal M&{\rm if}&i\in\{0,2\};
\\
\sqrt{3}\cdot\mathcal M&{\rm if}&
i=1;
\end{array}
\right.
\;\mathcal M_i':=\left\{
\begin{array}{rcl}
\mathcal M&{\rm if }& i=0;\\
\sqrt{3}\cdot \mathcal M&{\rm if }& i=1;\\
\displaystyle\frac{\mathcal M}{3}&{\rm if }& i=2.
\end{array}
\right.
\end{equation}
Then we have the following result generalizing \cite[Theorem 2]{ST_second_main_term_1}.

\begin{Theorem}\label{th_main_counting_result}
Let $\Sigma$ be any finite collection of local specifications and let $F(\Sigma)$ be the corresponding family of $S_4$-quartic fields.
Let $\psi:\R_{\geq 0}\to\R_{\geq 0}$ be a smooth function with compact support. Then 
\begin{equation*}
N_{\Sigma}(\psi,X)=C_1(\Sigma)\wt{\psi}(1)\cdot X+ C_{5/6}(\Sigma)\wt{\psi}(5/6)\cdot X^{5/6} + O(X^{13/16+o(1)}),
\end{equation*}
where
\begin{equation}\label{eq:leadingcons}
C_1(\Sigma):=\frac{1}{2}\Bigl(\sum_{K\in\Sigma_\infty}\frac{1}{\#\Aut(K)}\Bigr)\prod_p
\Bigl(\sum_{K\in\Sigma_p}\frac{|\Delta(K)|_p}{\#\Aut(K)}\Bigr)\Bigl(1-\frac{1}{p}\Bigr),
\end{equation} and

\begin{align*}    
C_{5/6}(\Sigma)&:=\frac{\pi}{8}\Bigl(\mathcal M_{\Sigma}\cdot\zeta(1/3)\prod_{p}\Bigl(1-\frac{1}{p^{1/3}}\Bigr)\sum_{K\in\Sigma_p}\frac{|\Delta(K)|_p}{\#\Aut(K)}\cdot \int_{x\in (C_K/\Z_p)^{\vee}_{\prim}}  |\det{r_{K,x}}|_p^{-2/3}dx \\&\;\;\;+ \;\mathcal M_\Sigma'\cdot\zeta(2/3)\prod_{p}\Bigl(1-\frac{1}{p^{2/3}}\Bigr)\sum_{K\in\Sigma_p}\frac{|\Delta(K)|_p}{\#\Aut(K)}\cdot \int_{x\in (C_K/\Z_p)^{\vee}_{\prim}}  \epsilon_p(r_x)|\det{r_{K,x}}|_p^{-2/3}dx\Bigr).
\end{align*}
Above, $(\cO_K,C_K,r_K)$ is the triple corresponding to $K$, $\epsilon_p$ denotes the Hasse invariant, and 
\begin{equation*}
\mathcal M_\Sigma = \sum_{K=\R^{4-2i}\times\C^i\in\Sigma_\infty}\frac{\mathcal M_i}{|\Aut(K)|};
\quad\quad
\mathcal M_\Sigma' = 
\sum_{K=\R^{4-2i}\times\C^i\in\Sigma_\infty}\frac{\mathcal M_i'}{|\Aut(K)|}.
\end{equation*}
\end{Theorem}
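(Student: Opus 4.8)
The plan is to reduce the count of $S_4$-quartic fields to a count of maximal quartic rings together with their cubic resolvents, and then to apply the geometry-of-numbers machinery of Bhargava to the prehomogeneous vector space $\C^2 \otimes \Sym^2(\C^3)$ of pairs of ternary quadratic forms. Concretely, a quartic field $K \in F(\Sigma)$ corresponds to its maximal order $\O_K$, which by Bhargava's parametrization has a unique cubic resolvent ring $C_K$; the pair $(\O_K, C_K)$ is represented by a $\GL_2(\Z)\times\GL_3(\Z)$-orbit on the lattice $V(\Z) = \Z^2 \otimes \Sym^2(\Z^3)$, with $|\Delta(K)|$ equal to the discriminant of the corresponding integral point. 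So the first step is to set up the orbit-counting problem: write $N_\Sigma(\psi, X)$ as a sum over $\GL_2(\Z)\times\GL_3(\Z)$-orbits of maximal points in $V(\Z)$ lying in the union of congruence families cut out by $\Sigma$ at the finite places and by the archimedean condition at infinity, weighted by $\psi(|\Delta|/X)$.

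The second step is to execute the smoothed count of these lattice points in a fundamental domain for the group action. Following the first paper in the series \cite{ST_second_main_term_1}, I would take a fundamental domain of the form $\mathcal F \cdot v_0$ where $\mathcal F$ is a fundamental domain for $\GL_2(\Z)\times\GL_3(\Z)$ in the real points, average over a compact set in the ``thickening'' directions to turn the lattice-point count into a volume computation, and handle the cusp using the averaging trick. The smoothness of $\psi$ is exactly what gives the power-saving error term: the truncated-cusp and tail contributions are controlled via Poisson summation / the rapid decay of $\widehat\psi$, yielding the two main terms $\widetilde\psi(1) X$ and $\widetilde\psi(5/6) X^{5/6}$ coming respectively from the main body of the fundamental domain and from the secondary term in the count of points near the cusp (the distribution of points on the boundary component corresponding to reducible or ``distinguished'' loci). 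The coefficients $C_1(\Sigma)$ and $C_{5/6}(\Sigma)$ emerge as products of the archimedean volume and the local $p$-adic densities of the congruence conditions; the appearance of $|\Delta(K)|_p/\#\Aut(K)$, the resolvent integrals $\int |\det r_{K,x}|_p^{-2/3}\,dx$, and the Hasse-invariant twist $\epsilon_p(r_x)$ in $C_{5/6}$ reflects the finer structure of the secondary-term density, which splits according to the cubic resolvent and a quadratic character.

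The third step, which is where the new content beyond \cite{ST_second_main_term_1} lies, is the passage from rings to fields: the orbit count naturally produces all quartic \emph{rings} (weighted by cubic resolvents), and one must sieve to maximal rings. Because we no longer assume we are in an $S_4$-family, the sieve now has to be performed at \emph{all} primes simultaneously, including a uniform tail estimate showing that the contribution of rings that are non-maximal at a large prime $p$ is negligible. The key inputs are a uniform upper bound for the number of quartic rings that are nonmaximal at $p$ with bounded discriminant (of the shape $O(X/p^2 + X^{5/6}/p^{?})$ with an explicit saving), together with the fact that the main and secondary densities are multiplicative over $p$, so that the sieve converges to the Euler products in \eqref{eq:leadingcons} and in the displayed formula for $C_{5/6}(\Sigma)$. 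One also discards the non-$S_4$ rings (whose Galois group is $D_4$, $C_4$, $V_4$, or $C_2$), using that these are already counted to precision $o(X^{5/6})$ and in fact lie in thin subfamilies.

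I expect the main obstacle to be the uniformity of the sieve in the secondary term. In the first paper the $S_4$-family hypothesis meant the secondary-term density had to be controlled only over a fixed finite set of primes, whereas here the tail estimate must bound the secondary contribution of rings nonmaximal at $p$ uniformly in $p$, and crucially the secondary main term must survive this sieve with the right constant --- this requires showing that the $X^{5/6}$-term in the orbit count, restricted to the nonmaximal-at-$p$ locus, is genuinely smaller by a power of $p$, not merely by $p^{-1+\epsilon}$, so that summing over all $p$ converges. This forces a careful analysis of the geometry of the nonmaximal locus inside $V(\Z)$ and its interaction with the cuspidal region responsible for the secondary term, and it is the reconciliation of these two estimates that controls the final error exponent $13/16 + o(1)$.
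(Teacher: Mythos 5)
Your overall frame---parametrize maximal quartic rings with cubic resolvents by $G(\Z)$-orbits on $V(\Z)$, count lattice points smoothly to get $\wt\psi(1)X+\wt\psi(5/6)X^{5/6}$, then sieve to maximality and strip off non-$S_4$ contributions---is the right starting point, but two of your steps hide the actual difficulties, and one of them as stated is false.

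First, the non-$S_4$ contribution is not a thin or negligible family that can simply be ``discarded.'' $D_4$-quartic fields have density $\asymp X$ (Cohen--Diaz y Diaz--Olivier), and at the level of the orbit count the non-generic points (orders in $D_4$-, $V_4$-, $C_4$-fields, sums of two quadratic fields, and reducible algebras) contribute at order $X\log X$, since for instance products of two quadratic fields already give an $X\log X$ term and their suborders dominate the deep cusp. Removing these with an error of $O(X^{3/4+o(1)})$---which is needed to even see the $X^{5/6}$ coefficient---requires its own counting theorems with uniformity in the level of the imposed splitting conditions: uniform counts of quadratic fields, cubic fields (Bhargava--Taniguchi--Thorne input), $D_4$-fields via Hecke $L$-functions with conductors growing with the level, and pairs of quadratic fields via a hyperbola method, all then summed over suborders with resolvents. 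In the paper this is an entire section and produces an $X\log X$, $X$, $X^{5/6}$ expansion for the non-generic count; nothing of this kind appears in your plan.

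Second, your step two assumes the $X^{5/6}$ coefficient for a general congruence family ``emerges as products of the archimedean volume and the local $p$-adic densities'' from the fundamental-domain count. That is precisely what is not available here: once the family is no longer an $S_4$-family, the lattice contains non-generic points, the deep cusp (dominated by the slice $a_{11}=b_{11}=0$ and the $\det(A)=0$ slice) contributes to and contaminates the secondary term, and neither the slice term $C_{00}(\phi)$ nor the $X^{5/6}$ constant coming from non-generic points in the generic body is directly computable. The paper's key maneuver is to prove these two unevaluated constants cancel exactly, by imposing nonmaximality at a squarefree $q$, showing both constants scale as $1/q+O(1/q^{2})$, and invoking a residue bound at $s=5/6$ from the first paper to force the difference to vanish; only after this cancellation can the surviving $X^{5/6}$ coefficient be matched with the local integrals $\int|\det r_{K,x}|_p^{-2/3}$ and the Hasse-invariant twist. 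Relatedly, the maximality sieve and the final error $O(X^{13/16+o(1)})$ are not obtained by a fresh uniform-in-$p$ tail estimate as you propose, but by importing from the first paper the statement that the field count has a power-series expansion whose coefficients are Möbius-weighted sums over $q$ of the ring-count coefficients; the new content is the evaluation of those coefficients, split into generic and non-generic parts, with the non-generic part identified with the direct reducible-field count. Without the cancellation argument and the non-generic counting input, your proposal cannot produce, or even show the existence of, the stated $C_{5/6}(\Sigma)$.
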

\noindent See \cite[\S 9.6]{ST_second_main_term_1} for the explicit values in the product for various $\Sigma$.

Cohen--Diaz y Diaz--Olivier \cite[Corollary 6.1]{Cohen_Diaz_Olivier} prove that the number of $D_4$-quartic fields, with discriminant bounded by $X$, grows like a constant times $X$ up to an error of $O(X^{3/4+o(1)})$. Durlanik \cite{MD_thesis} proves versions of this result, where the count is smoothed, and the family of all $D_4$-quartic fields is replaced by a family satisfying finitely many local conditions. Meanwhile, the number of $V_4$-, $C_4$-, and $C_2\times C_2$-fields with discriminant less than $X$ is bounded by $O(X^{1/2+o(1)})$ by work of Baily \cite{Baily}, and the number of $A_4$-quartic fields with discriminant less than $X$ is bounded by $O(X^{.778...})$ in \cite[Theorem 1.4]{BSTTTZ}. Therefore, Theorem \ref{th_main_counting_result} can be used to obtain the smooth count of all quartic fields (or indeed, quartic fields satisfying any finite set of local conditions) with bounded discriminant.

\medskip

In \cite[Theorem 3]{ST_second_main_term_1}, we prove a smoothed counting results for $S_4$-families of quartic rings (along with their resolvents). In this article, we generalize that result to any congruence family. We now precisely define the set of families that we can handle: $\Lambda$ is said to be a {\it finite collection of local specifications for quartic rings} if $\Lambda=(\Lambda_v)_{v\in S}$, where for all $v$ in a finite set of places $S$, the set $\Lambda_v$ consists of a finite set of quartic triples over $\Z_v$ with non-zero discriminant. We define $R(\Lambda)$ to be the set of quartic triples $(Q,C,r)$ over $\Z$, whose base change to $\Z_v$ lies in $\Lambda_v$ for every $v\in S$, and such that $Q$ is an order in an $S_4$-quartic field. Let $\psi:\R_{\geq 0}\to\R$ be a smooth function with compact support. We define the smoothed count of quartic triples in $R(\Lambda)$ analogously to \eqref{eq:fields_count}:
\begin{equation}\label{eq:rings_count}
N_{\Lambda}(\psi,X):=\sum_{(Q,C,r)\in R(\Lambda)}\psi\Bigl(\frac{|\Delta(Q)|}{X}\Bigr).
\end{equation}
Then we have the following result:

\begin{Theorem}\label{thm:mainS4rings}
Let $\Lambda$ be any finite collection of local specifications for quartic rings and let $R(\Lambda)$ be the corresponding family of quartic triples. Let $\psi:\R_{\geq 0}\to\R_{\geq 0}$ be a smooth function with compact support. Then 
\begin{equation*}
N_{\Lambda}(\psi,X)=C_1(\Lambda)\wt{\psi}(1)\cdot X+ C_{5/6}(\Lambda)\wt{\psi}(5/6)\cdot X^{5/6} + O(X^{3/4}\log X),
\end{equation*}
where
\begin{equation*}\label{eq:leadingcons}
C_1(\Lambda)=\frac{1}{2}\Bigl(\sum_{K\in\Lambda_\infty}\frac{1}{\#\Aut(K)}\Bigr)\prod_p
\Bigl(\sum_{(Q,C,r)\in\Lambda_p}\frac{|\Delta(Q)|_p}{\#\Aut((Q,C,r)}\Bigr)\Bigl(1-\frac{1}{p}\Bigr),
\end{equation*} and
\begin{align*}\label{eq:secondcons}    
C_{5/6}(\Lambda)&=\frac{\pi}{8}\Bigl(\mathcal M_\Lambda\zeta(1/3)\prod_{p}\Bigl(1-\frac{1}{p^{1/3}}\Bigr)\cdot\sum_{(Q,C,r)\in\Lambda_p}\frac{|\Delta(Q)|_p}{\#\Aut(Q,C,r)}\cdot \int_{x\in (C/\Z_p)^{\vee}_{\prim}}  |\det{r_x}|_p^{-2/3}dx \\&\;\;\;+\; \mathcal M_\Lambda'\zeta(2/3)\prod_{p}\Bigl(1-\frac{1}{p^{2/3}}\Bigr)\cdot\sum_{(Q,C,r)\in\Lambda_p}\frac{|\Delta(Q)|_p}{\#\Aut(Q,C,r)}\cdot \int_{x\in (C/\Z_p)^{\vee}_{\prim}}  \epsilon_p(r_x)|\det{r_x}|_p^{-2/3}dx\Bigr).
\end{align*}
Above, notation is as in Theorem \ref{th_main_counting_result}.
\end{Theorem}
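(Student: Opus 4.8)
The plan is to translate the count of quartic triples into a count of $\GL_2(\Z)\times\SL_3(\Z)$-orbits on the lattice $V_\Z:=\Z^2\otimes\Sym^2(\Z^3)$ of pairs of integral ternary quadratic forms, and then to re-run the geometry-of-numbers and cusp analysis of \cite{ST_second_main_term_1} for an arbitrary congruence family. By Bhargava's parametrization \cite{BHCL3}, a quartic triple $(Q,C,r)$ over $\Z$ with $\Delta(Q)\neq 0$ corresponds to a nondegenerate $\GL_2(\Z)\times\SL_3(\Z)$-orbit $w\in V_\Z$ with $\Disc(w)=\Delta(Q)$ and $\#\Stab_{\GL_2(\Z)\times\SL_3(\Z)}(w)=\#\Aut(Q,C,r)$; moreover each local specification $\Lambda_p$ pulls back to a bounded, open, $\GL_2(\Z_p)\times\SL_3(\Z_p)$-invariant subset of $V_{\Z_p}$, and $\Lambda_\infty$ to a union of open $\GL_2(\R)\times\SL_3(\R)$-orbits on $V_\R$. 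Thus, up to the non-$S_4$ discrepancy treated below, $N_\Lambda(\psi,X)$ equals the $\psi$-weighted count of orbits $w\in\bigcap_{v\in S}\Lambda_v$ whose associated quartic ring is an order in an $S_4$-field.

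The main step is to count the orbits in $\bigcap_{v\in S}\Lambda_v$ whose quartic ring is an order in a quartic \emph{field} (equivalently, the irreducible orbits), weighted by $\psi(|\Disc|/X)$, using Bhargava's averaging method over a fundamental domain for $\GL_2(\Z)\times\SL_3(\Z)$ acting on $V_\R$. This is exactly the computation proving \cite[Theorem 3]{ST_second_main_term_1}, and I would verify that the $S_4$-family hypothesis there enters only in the final identification ``irreducible orbit $\Leftrightarrow$ order in an $S_4$-field'' and plays no role in the volume estimates, the excision of the cusp, or the count of irreducible lattice points in the cusp. The main body of the fundamental domain contributes $C_1(\Lambda)\wt{\psi}(1)\cdot X$ after an Euler-product evaluation of the $p$-adic densities cut out by the $\Lambda_p$ — the local factors being the $p$-adic masses $\sum_{(Q,C,r)\in\Lambda_p}|\Delta(Q)|_p/\#\Aut(Q,C,r)$ together with the group-volume correction $1-1/p$ — while the cusp regions, where one of the two ternary forms degenerates, contribute the secondary term $C_{5/6}(\Lambda)\wt{\psi}(5/6)\cdot X^{5/6}$: the factors $\zeta(1/3)\prod_p(1-p^{-1/3})$ and $\zeta(2/3)\prod_p(1-p^{-2/3})$ arise from sums over sublattices, the weights $\mathcal M_\Lambda,\mathcal M_\Lambda'$ from the archimedean cusp, and the $p$-adic integrals $\int_{(C/\Z_p)^\vee_{\prim}}|\det r_x|_p^{-2/3}\,dx$ and $\int_{(C/\Z_p)^\vee_{\prim}}\epsilon_p(r_x)|\det r_x|_p^{-2/3}\,dx$ from the densities of the two cusp strata. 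The total error at this stage is $O(X^{3/4}\log X)$.

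It remains to remove the orbits whose quartic ring is an order in a $D_4$-, $A_4$-, $C_4$-, or $V_4$-quartic field. Summing over the index of the order (which does not increase the exponent), the $D_4$, $C_4$, and $V_4$ contributions are $O(X^{3/4}\log X)$ by \cite{Cohen_Diaz_Olivier,MD_thesis,Baily}, and the $A_4$ contribution is controlled using the fact that the cubic resolvent ring of an order in an $A_4$-field is a cyclic cubic ring — of which there are $O(X^{1/2+o(1)})$ of bounded discriminant — together with \cite{BSTTTZ}; all of this is absorbed into the stated error term, and the two main terms are untouched because non-$S_4$ orders are of strictly smaller order than $X^{5/6}$. (If instead one first counts \emph{all} orbits in $\bigcap_{v\in S}\Lambda_v$, one must separate the reducible orbits with $Q\otimes\Q$ not a field, which — dominated by $\Q\times(\text{cubic order})$ — contribute at orders $X$ and $X^{5/6}$; their precise asymptotics, obtainable from the reducible stratification of $V_\Z$ or from the secondary-term count for cubic rings, are exactly what turns the ``all orbits'' Euler products into $C_1(\Lambda)$ and $C_{5/6}(\Lambda)$.)

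The hard part is the verification, inside the main step, that the count of irreducible lattice points in the cusp of the fundamental domain — the source of the $X^{5/6}$ term, and the least routine ingredient of \cite{ST_second_main_term_1} — survives dropping the $S_4$-family hypothesis, so that reducible orbits, now abundant in the cusp, must be separated out with full precision rather than bounded crudely. Concretely, one must re-examine, stratum by stratum, the separation of reducible from irreducible points in the cusp and confirm that enlarging the family alters only the local integrals and $\zeta$-values in the advertised way; by comparison, the final removal of $D_4$-, $A_4$-, $C_4$-, and $V_4$-orders is a straightforward application of the bounds quoted in the introduction.
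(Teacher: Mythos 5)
There are two genuine gaps here, and together they bypass what is actually the core of the paper's argument. First, your central claim that in \cite{ST_second_main_term_1} the $S_4$-family hypothesis ``plays no role in the volume estimates, the excision of the cusp, or the count of irreducible lattice points in the cusp'' is false. That hypothesis is used precisely to guarantee that the lattice contains no points in the deep cusp, i.e.\ that its density on the slice $a_{11}=b_{11}=0$ vanishes, and this is what kills a genuine term of size $X^{5/6}$ in the cusp analysis. For a general congruence family that term survives (it is the constant $C_{00}(\phi)$ of Theorem \ref{thm_point_count_generic}, arising from \eqref{eq:extra561}), and in addition the non-generic points \emph{outside} that slice contribute a true asymptotic $C^{\ngen,\gb}_\phi X^{5/6}$ (Theorem \ref{th_red_main_body_final}), not an error term: Bhargava-style exclusion of reducible points in the cusp only gives $O(X^{11/12})$, far too weak at the $X^{5/6}$ scale. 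Neither constant can be evaluated directly; the proof works because one shows $C_{00}(\phi)=C^{\ngen,\gb}_\phi$ indirectly (Lemma \ref{lem:constant_cancel}), by checking that both scale as $1/q+O(q^{-2})$ upon imposing non-maximality at $q$ and then invoking the absolute convergence of the relevant residue sums from \cite[Proposition 12.4]{ST_second_main_term_1}. Your final paragraph flags this as ``the hard part'' but supplies no mechanism for it, so the $X^{5/6}$ coefficient in your main step is not actually under control.

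Second, the removal of non-$S_4$ field orders cannot be absorbed into $O(X^{3/4}\log X)$. Cohen--Diaz y Diaz--Olivier prove an \emph{asymptotic} of the form $cX$ with $c>0$ for $D_4$-quartic fields, not an upper bound $O(X^{3/4+o(1)})$; hence orders in $D_4$-fields (together with their cubic resolvents) contribute a positive multiple of $X$, and a priori an $X^{5/6}$ term, to your main step, so discarding them as error would falsify the stated constant $C_1(\Lambda)$. This is exactly why the paper devotes \S3 to precise smoothed counts, uniform in the level, of the $D_4$-, cubic-, and quadratic-pair families and of their suborders (Theorem \ref{thm_red_points_count}), whose full expansions in $X\log X$, $X$, and $X^{5/6}$ are then subtracted exactly rather than bounded. (The $A_4$ bound $O(X^{0.778\ldots})$ also exceeds $X^{3/4}$, though that is a lower-order issue.) In short, the proposal treats as negligible precisely the contributions --- reducible and reducible-resolvent points in the cusp, and $D_4$ orders --- whose exact evaluation and cancellation constitute the actual proof.
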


For a ring $R$ we define $V(R)=R^2\otimes \Sym^2(R^3)$. The group $\GL_2\times \GL_3$ acts naturally on $V$, and this representation is prehomogeneous (the action over $\C$ has a unique open orbit). Foundational work of Sato--Shintani \cite{SatoShintani} associates Shintani zeta functions (defined in \S2.3) to prehomogeneous representations, and they develop general theory proving that these representations have meromorphic continuation to the complex plane with constrained orders and locations of the poles. For the specific case of $(V,G)$, it follows that the corresponding Shintani zeta functions have possible poles of order at most four at $s=1$, and of order at most two at $s=5/6$ and $s=3/4$. From Bhargava's landmark counting results \cite{dodqf}, it follows that the possible pole at $s=1$ has order at most $2$. Using our counting results, we prove an analogous result about the possible pole at $s=5/6$.

\begin{Theorem}\label{th_shintani_residue_main}
Let $\phi:V(\Z)\to\R$ be a $G(\Z)$-invariant function, defined by congruence conditions modulo a positive integer. Then for $i\in\{0,1,2\}$, the Shintani zeta functions $\xi_i(\phi;s)$ $($defined in \S2.3$)$ have at most a simple pole at $s=5/6$.
\end{Theorem}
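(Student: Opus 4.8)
The plan is to deduce Theorem \ref{th_shintani_residue_main} from the ring-counting result Theorem \ref{thm:mainS4rings} by relating the Dirichlet series coefficients of the Shintani zeta functions $\xi_i(\phi;s)$ to the counting functions $N_\Lambda(\psi,X)$. The Shintani zeta function $\xi_i(\phi;s) = \sum_{x} \phi(x)/|\Delta(x)|^s$ is, up to the contribution of reducible (non-maximal/non-$S_4$) orbits, a Dirichlet series whose partial sums are exactly counted by the left-hand side of Theorem \ref{thm:mainS4rings}: the $G(\Z)$-orbits on $V(\Z)$ with a given sign of discriminant and nonzero discriminant biject (via Bhargava's parametrization) with quartic triples $(Q,C,r)$ over $\Z$, weighted by $1/\#\Aut(Q,C,r)$, and a congruence-defined $\phi$ is exactly a finite collection of local specifications $\Lambda$. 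The index $i\in\{0,1,2\}$ records the number of complex places, i.e.\ the real orbit type, so each $\xi_i$ corresponds to fixing $\Lambda_\infty$ to a single archimedean type.

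The key analytic input is a Tauberian-type argument run in reverse: Theorem \ref{thm:mainS4rings} gives, for every smooth compactly supported $\psi$,
\[
\sum_{(Q,C,r)\in R(\Lambda)}\psi\Bigl(\frac{|\Delta(Q)|}{X}\Bigr) = C_1(\Lambda)\wt\psi(1)X + C_{5/6}(\Lambda)\wt\psi(5/6)X^{5/6} + O(X^{3/4}\log X).
\]
Writing $a_n = \#\{(Q,C,r)\in R(\Lambda): |\Delta(Q)|=n\}$ (counted with automorphism weights), this says $\sum_n a_n \psi(n/X)$ has an asymptotic expansion with exponents $1$, $5/6$, and error $X^{3/4+o(1)}$. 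Standard Mellin-transform / contour-shift technology (for instance, the argument turning smoothed asymptotics into analytic continuation, as in the converse direction of a Tauberian theorem — one takes the Mellin transform in $X$ of both sides) then shows that $D_\Lambda(s) := \sum_n a_n n^{-s}$, which up to elementary factors and the reducible-orbit correction equals $\xi_i(\phi;s)$, extends meromorphically past $\Re s = 3/4$ with poles only at $s=1$ (from the $X$ term) and $s=5/6$ (from the $X^{5/6}$ term), both simple. The reducible-orbit contributions need to be shown to be holomorphic for $\Re s > 3/4$ (or at least to contribute no pole at $s=5/6$); this follows because non-$S_4$ and non-maximal contributions are governed by the already-known asymptotics with error $o(X^{5/6})$ cited in the introduction (Baily, \cite{BSTTTZ}, Cohen--Diaz~y~Diaz--Olivier, Durlanik), so the full Shintani coefficient count differs from $N_\Lambda$ by a term whose smoothed sum is $o(X^{5/6})$, hence contributes nothing to the order of the pole at $5/6$.

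The main obstacle is the passage from smoothed counts for all $\psi$ to a statement about the meromorphic continuation and pole orders of the honest Dirichlet series: one must be careful that the error term $O(X^{3/4}\log X)$ is uniform in a way that survives the Mellin inversion, and that the extracted main terms genuinely correspond to simple poles rather than, say, a pole plus a lower-order oscillating term that the smoothing has averaged away. The clean way to handle this is to choose a family of test functions $\psi$ whose Mellin transforms $\wt\psi$ concentrate near $s=5/6$ (or to directly integrate $\int_0^\infty \sum_n a_n \psi(n/X) X^{-s-1}\,dX$ and recognize it as $\wt\psi(\text{shift})\cdot D_\Lambda(\text{shift})$ by Fubini), so that the presence of the single term $C_{5/6}(\Lambda)\wt\psi(5/6)X^{5/6}$ with no $\log X$ enhancement forces $D_\Lambda(s)$ to have a simple — not double — pole at $s=5/6$; the Sato--Shintani general theory already guarantees a pole of order at most two there, so ruling out the order-two term is exactly what the absence of an $X^{5/6}\log X$ term in Theorem \ref{thm:mainS4rings} accomplishes. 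The remaining bookkeeping — matching archimedean orbit types to the index $i$, converting between $\Delta(Q)$ and the $V(\Z)$-invariant, and tracking the automorphism weights — is routine given the parametrization recalled before Theorem \ref{thm:mainS4rings}.
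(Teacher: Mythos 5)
Your reduction runs the paper's logic backwards and, more importantly, buries the real difficulty in the step you dismiss as a correction term. In the paper, Theorem \ref{thm:mainS4rings} is \emph{deduced from} Theorem \ref{th_shintani_residue_main}: the expansion of $\wt N_\Lambda(\psi,X)$ with no $X^{5/6}\log X$ term (and the $O(X^{3/4+o(1)})$ error in Corollary \ref{cor:I_improved_error}) already presupposes that the pole of $\xi_i$ at $s=5/6$ is simple, so quoting Theorem \ref{thm:mainS4rings} to prove Theorem \ref{th_shintani_residue_main} is circular within this paper. The paper instead proves Theorem \ref{th_shintani_residue_main} directly, by decomposing $\I_\eta(\phi,\cB;X)=\I_\eta(\phi^{\gb},\cB;X)+\I_\eta(\phi^{\ngen},\cB;X)-\I_\eta(\phi^{\ngen,\gb},\cB;X)$ and showing each piece has an expansion in $X\log X$, $X$, $X^{5/6}$ with a power-saving error (Theorem \ref{thm_point_count_generic}, Corollary \ref{cor:counting_red_rings_I}, Theorem \ref{th_red_main_body_final}); the absence of an $X^{5/6}\log X$ term in all three pieces then yields the simple pole via \eqref{eq:I_to_Z_Mellin} together with the a priori Sato--Shintani continuation. (Your Mellin-inversion step, using the a priori bound on the pole order to convert smoothed asymptotics into pole orders, is fine and matches the remark following \eqref{eq:N_to_xi_Mellin}.)

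The concrete gap is your claim that the non-generic orbits differ from $N_\Lambda$ by a term whose smoothed sum is $o(X^{5/6})$. The Shintani zeta function counts all quartic \emph{rings} with cubic resolvents, not fields: the non-generic triples (orders in $\Q\times\Q\times K_2$, $\Q\times K_3$, $D_4$-fields, and $K\times K'$, together with all their suborders and resolvent rings) contribute terms of size $X\log X$ (in fact the dominant term), $X$, and $X^{5/6}$, and a priori could contribute an $X^{5/6}\log X$ term --- which is exactly what must be excluded to conclude the pole at $5/6$ is simple. The field-count results you cite (Baily, \cite{BSTTTZ}, Cohen--Diaz y Diaz--Olivier, Durlanik) say nothing about these order-plus-resolvent counts; ruling out the $X^{5/6}\log X$ term requires the uniform field counts with secondary terms and the Nakagawa-type suborder estimates of \S3 (Theorem \ref{thm_red_points_count}), plus the slicing analysis of \S\S4--5, none of which your argument supplies or replaces. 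There is also a mismatch of scope: a general periodic $G(\Z)$-invariant $\phi$ (e.g.\ $\phi\equiv 1$, or a nonmaximality condition at a prime) is not of the form $\phi_\Lambda$ for a \emph{finite} collection $\Lambda$ of local ring specifications, so Theorem \ref{thm:mainS4rings} does not even apply directly to all $\phi$ covered by Theorem \ref{th_shintani_residue_main}.
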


Before describing how the above results are proved, we make the following remark.
\begin{Remark}\label{Rem_main_thms}
{\rm The poles of the zeta functions $\xi_i(\phi;s)$ are closely related to the counting functions of quartic triples, since quartic triples are parametrized, by foundational work of Bhargava \cite{BHCL3}, by $G(\Z)$-orbits on $V(\Z)$. However, for the purpose of computing the poles and residues of $\xi_i(\phi;s)$, it is necessary to determine the power series expansion of the smooth counts of {\em all } quartic triples, not merely $S_4$ triples as carried out in Theorem \ref{thm:mainS4rings}. In \S3, we give a recipe to count quartic triples $(Q,C,r)$, where $Q$ is an order in a non $S_4$ \'etale quartic extension of $\Q$. Determining the power series expansion of this count following that recipe, and combining it with Theorem \ref{thm:mainS4rings} will formally recover the residues of the poles of $\xi_i(\phi;s)$. 
}\end{Remark}

\subsection*{Outline of the proof}

The methods we use to prove our main results are to a large extent complementary to the methods in the first article of our series \cite{ST_second_main_term_1}. We begin by describing how Theorems \ref{thm:mainS4rings} and \ref{th_shintani_residue_main}, which are closely related, are proved. As explained in Remark \ref{Rem_main_thms}, analogues of these results, for $S_4$-families, are proved in \cite{ST_second_main_term_1}. Our proofs here begin at the same point as there, namely, these results follows from sufficiently precise smooth counts of $G(\Z)$-orbits on $V(\Z)$, where these orbits are weighted by some congruence function $\phi$. These counts can be expressed as an integral which looks like
\begin{equation}\label{eq:FF_int}
\int_{g\in\FF} \#(L\cap g\cB)^{\Delta<X}dg,
\end{equation}
where $\FF$ is a fundamental domain $\FF$ for the action of $\GL_2(\Z)\times\SL_3(\Z)$ on $\GL_2(\R)\times\SL_3(\R)$ and $L\subset V(\Z)$ is a lattice. We note that both Sato--Shintani's and Bhargava's counting methods begin with needing to evaluate such an integral.

Shintani's method of evaluation, carried out  successfully in \cite{MR289428} for the prehomogeneous representation $\Sym^3(2)$ of $\GL_2$ begins by using Poisson summation to estimate the integrand. Simplifying and generalizing Davenport's approach \cite{MR43822}, Bhargava uses geometry-of-numbers tools to evaluate such integrals. Both these approaches have their advantages: Poisson summation gives a very precise estimate of the integrand, from which lower-order terms may be recovered. Meanwhile, geometry-of-numbers methods are highly flexible, allowing for manual removal of undesirable points from $L$ (such as points corresponding to non $S_4$-rings, which typically lie high in the cusp). However, Poisson summation lacks this flexibility\footnote{Indeed, Yukie's impressive work \cite{Yukie} adapting Shintani's technique to the case $(G,V)$ falls short of determining the orders of the poles and their residues.}, and insists on counting all points in $L$, while geometry-of-numbers techniques are unsuited for recovering lower-order terms except in rather simple cases (see \cite{BST,ShankarTaniguchi} for examples). As we will describe in the remainder of the section, this paper demonstrates that our new counting method (introduced in \cite{ST_second_main_term_1}) combines the advantages of both previous methods - precision and flexibility.

\medskip

We divide the integral \eqref{eq:FF_int} over $\FF$ into three regions: first, the {\it round body}, consisting of $g\in\FF$ such that all the coordinates in $g\cB$ are large (growing at least as some power of $X$). Second, the {\it shallow cusp}, consisting of those $g\in \FF$ such that $g$ is not in the round body, but such that $g\cB\cap V(\Z)$ contains at least one element $x$ corresponding to an $S_4$- or $A_4$-ring. We call such elements $x$ {\it generic}. Third, the {\it deep cusp}, consisting of those elements $g\in\FF$ such that $g\cB\cap V(\Z)$ contains only non-generic elements. In \cite{ST_second_main_term_1}, we assumed that $L$ is an {\it $S_4$-lattice}, i.e., a lattice containing only generic elements, and developed a counting method which gave precise estimates for the integral \eqref{eq:FF_int} in the round body and the shallow cusp. This was sufficient since (by the assumption on $L$) the contribution from the deep cusp was $0$. This strategy falls short for our purposes because this time the deep cusp does have a nontrivial (in fact, a dominant!) contribution, and also because separating generic from non-generic elements in our count (necessary to prove Theorems \ref{th_main_counting_result} and \ref{thm:mainS4rings}) is no longer automatic - all our lattices will contain both types of elements. We develop a new set of tools to handle both these issues. More precisely, our strategy to prove Theorem \ref{th_shintani_residue_main} is the following:
\begin{itemize}
\item[{\rm (1)}] We evaluate the analogue of \eqref{eq:FF_int}, with $L$ replaced with the set of non-generic elements in $L$, without employing point-counting methods. We do this by carrying out a precise count of \'etale quartic algebras over $\Q$, which are neither $S_4$- nor $A_4$-quartic fields, and counting subrings inside them (along with resolvent cubic rings). This method gives a power series expansion for the integral, with terms $X\log X$, $X$, and $X^{5/6}$, up to a power saving error term. This is carried out in \S3.
\item[{\rm (2)}] We isolate certain ``special slices'' of $V(\R)$ which have the property of containing only non-generic elements, and account for the majority of the non-generic elements. We note that there do exist non-generic elements outside these slices, which we show constitute a contribution asymptotic to $X^{5/6}$ . Every integral element in the deep cusp lies in one of these slices. (Though these slices also have non-trivial intersection with the main body and shallow cusp.)
\item[{\rm (3)}] We give a power series expansion (with terms $X$ and $X^{5/6}$) for the analogue of \eqref{eq:FF_int}, where we exclude points in the special slices from the count. For this, as previously noted, it suffices to restrict the integral to the main body and shallow cusp. It is only for this step, we use our (suitably modified) counting method from \cite{ST_second_main_term_1}. This is carried out in \S5.
\item[{\rm (4)}] We determine asymptotics for the analogue of \eqref{eq:FF_int}, where we restrict the count to only include
non-generic elements outside these slices. Since the order of growth for this is $\asymp X^{5/6}$, only primary term asymptotics are needed, and this can be done using traditional geometry-of-numbers methods. This is carried out in \S4.
\item[{\rm (5)}] We put everything together: Adding the result of Steps 1 and 3 over-counts \eqref{eq:FF_int} exactly by the result of Step 4. Since there is no $X^{5/6}\log X$ term in any of these three steps, Theorem \ref{th_shintani_residue_main} follows.
\end{itemize}

Once Theorem \ref{th_shintani_residue_main} has been obtained, two tasks remain to prove Theorem \ref{thm:mainS4rings}. First, we must separate the contributions of generic and non-generic rings from the count. And second, we must determine the leading constants of the $X$ and $X^{5/6}$ terms which arise in the counting. The first task is simple: indeed, the contribution of the generic rings count comes by taking the result of Step 3 above, and subtracting from it the result of Step 4. The second task is more complicated because there are two $X^{5/6}$ terms, coming from Steps 3 and 4. The leading coefficients of both of these terms are difficult to compute: The term in Step 3 is some sort of weighted volume inside a 10-dimensional slice in $V(\R)$ (with $a_{11}=b_{11}=0$ to be precise), while the term in Step 4 is a sum of weighted volumes inside a growing number of various different special slices. 

Though we are not really able to evaluate either term precisely, we prove that they must exactly cancel! We do this by showing that as we input a congruence condition of non-maximality at $p$, the total constant changes by a factor of $1/p+O(1/p^2)$. Then we input a result bounding the residues at $5/6$ of the associated Shintani zeta functions from \cite{ST_second_main_term_1}, showing that the rate of decay must be faster. This proves these two terms cancel exactly.
Once this is established, the remain terms (all from Step 3) are evaluated using results from \cite{ST_second_main_term_1}, yielding Theorem \ref{thm:mainS4rings}.

\medskip

Finally, we turn to the proof of Theorem \ref{th_main_counting_result}. For this, we once again input a crucial ingredient from \cite{ST_second_main_term_1}, namely, that the smooth count of all \'etale quartic algebras over $\Q$ has a power series expansion, and that the leading constants of the terms in this expansion can be described as limits of the residues of a sequence of Shintani zeta functions.\footnote{In \cite{ST_second_main_term_1}, we had no way to evaluate the residues, and so (aside from the case of $S_4$-families) we could not evaluate the leading constants of the power saving expansion, proving only that some power series expansion exists. In particular, we could not there prove that the $X^{5/6}\log X$ term does not appear in the power saving expansion for general families.} For our purposes then, we are left with two tasks: first, we must evaluate the limits of these residues (the limit essentially is as more and more congruence conditions of maximality are imposed), and second, we must isolate the $S_4$-contribution from the count of all \'etale quartic algebras. To this end, we begin by separating the contribution from $S_4$-rings and non $S_4$-rings to the residues of the arising Shintani zeta functions. The contribution of $S_4$-rings to the leading constants can be computed via a process similar to the proof of Theorem \ref{thm:mainS4rings}. While the computation of the contribution of non $S_4$-rings to the leading constants can be done in principle, it is in fact not necessary for us! Instead, we simply prove that the limit of these contributions is equal to the respective leading constants of the smooth counts of \'etale quartic non $S_4$-rings. Finally, explicitly taking the limits of the $S_4$-rings part of the leading constants, yields Theorem \ref{th_main_counting_result}.

\section{Background and preliminary results}

In this section we collect some preliminary background and results needed for the rest of the paper.

\subsection{The parametrization of quartic rings (along with their cubic resolvent rings)}

Let $G$ be the algebraic group
\begin{equation}\label{eq:G_definition}
G:=\{(g_2,g_3)\in\GL_2\times\GL_3:\det(g_2)\det(g_3)=1\}.
\end{equation}
We let $G(\R)^+$ denote the set of elements $g=(g_2,g_3)$ with $\det(g_2)>0$. For an element $g\in G(\R)^+$ we define $\lambda(g)=\det(g_3)^{1/6}$.
Let $V$ be the space of pairs of ternary quadratic forms; for a ring $R$, we represent elements $(A,B)\in V(R)$ by pairs of symmetric $3\times 3$ matrices:
\begin{equation*}
(A,B)=\left[\left(
\begin{array}{ccc}
a_{11} & \frac{a_{12}}{2} & \frac{a_{13}}{2}\\[.05in]
\frac{a_{12}}{2} & a_{22} & \frac{a_{23}}{2}\\[.05in]
\frac{a_{13}}{2} & \frac{a_{23}}{2} & a_{33}
\end{array}
\right),
\left(
\begin{array}{ccc}
b_{11} & \frac{b_{12}}{2} & \frac{b_{13}}{2}\\[.05in]
\frac{b_{12}}{2} & b_{22} & \frac{b_{23}}{2}\\[.05in]
\frac{b_{13}}{2} & \frac{b_{23}}{2} & b_{33}
\end{array}
\right)
\right],
\end{equation*}
where $a_{ij}$ and $b_{ij}\in R$. We will consider $a_{ij}$ and $b_{ij}$ to be functions from $V(R)$ to $R$ in the obvious way.
We obtain an action of $G$ on $V$ by restricting the natural action of $\GL_2\times\GL_3$ on $V$:
\begin{equation*}
(\gamma_2,\gamma_3)\cdot(A,B)=\Bigl(
\ \gamma_3A\gamma_3^t, \gamma_3B\gamma_3^t\Bigr)\gamma_2^t.
\end{equation*}
It is well known that the representation $V$ of $G$ is {\it prehomogeneous}, with ring of relative invariants generated by $\Delta\in\Z[V]$, where $\Delta(A,B)$ is defined to be the discriminant of the resolvent binary cubic form:
\begin{equation*}
\Delta(A,B):=\Delta(\Res(A,B)):=\Delta(4\det(Ax-By)).
\end{equation*}
As a consequence of being prehomogeneous, the set $V(\R)^{\Delta\neq }0$ of elements in $V(\R)$ with nonzero discriminant breaks up into finitely many $G(\R)$-orbits: namely, $V(\R)^{(0)}$, $V(\R)^{(1)}$, and $V(\R)^{(2)}$. Specifically, $V(\R)^{(i)}$ consists of elements $(A,B)\in V(\R)^{\Delta\neq 0}$ such that the conics $A$ and $B$ intersect in $4-2i$ real points in $\P^2(\R)$. Given any subset $S$ of $V(\R)$, we let $S^{\Delta\neq 0}$ and $S^{(i)}$ denote $S\cap V(\R)^{\Delta\neq 0}$ and $V(\R)^{(i)}$, respectively.

The following result is due to Bhargava \cite{BHCL3} in the case $R=\Z$ and Wood \cite{WoodThesis,MR2948473} for the case when $R$ is a PID. In fact, Wood's generalization is vastly more general, with versions of the below result when $R$ is replaced with an arbitrary base scheme.

\begin{theorem}\label{th:quartic_param}
Let $R$ be a principal ideal domain. There is a natural bijection between isomorphism classes of triples $(Q,C,r)$, where $Q$ is a quartic ring and $C$ is a cubic resolvent ring of $Q$, and $G(R)$-orbits on $V(R)$. Moreover,if $(Q,C,r)$ corresponds to  $(A,B)$, then $C$ corresponds to the $\GL_2(R)$-orbit of $\Res(A,B)$ under the Delone--Faddeev parametrization. We also have $\Delta(Q)=\Delta(C)=\Delta(A,B)$.
\end{theorem}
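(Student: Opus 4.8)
Since Theorem~\ref{th:quartic_param} is quoted from \cite{BHCL3} and \cite{WoodThesis,MR2948473}, here is the strategy one would follow to establish it. The plan is to build mutually inverse, $G(R)$-equivariant maps between $V(R)$ and the set of pairs (quartic triple, compatible choice of bases), and then quotient by $G(R)$. The key observation is that a quartic triple \emph{is} almost tautologically an element of $V(R)$: since $Q$ is free of rank $4$ and $C$ free of rank $3$ over the PID $R$, the quotients $Q/R$ and $C/R$ are free of ranks $3$ and $2$, and the quadratic resolvent map $r\colon Q/R\to C/R$ is homogeneous of degree $2$; after choosing $R$-bases of $Q/R$ and of $C/R$ it is precisely a pair of ternary quadratic forms $(A,B)$. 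One must choose these bases compatibly with the normalization built into the notion of a quartic triple (the one that will force $\Delta(Q)=\Delta(C)$), and it is exactly this constraint that cuts the group of admissible basis changes down from $\GL_3(R)\times\GL_2(R)$ to $G(R)$. Rechoosing bases moves $(A,B)$ within a $G(R)$-orbit, and an isomorphism of triples is the same as a compatible change of bases, so $(Q,C,r)\mapsto G(R)\cdot(A,B)$ is well defined, with equivariance built in.

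For the inverse, given $(A,B)\in V(R)$ one first reads off the cubic resolvent: the binary cubic form $\Res(A,B)=4\det(Ax-By)$ corresponds, under the Delone--Faddeev parametrization, to a cubic ring $C$, compatibly with the quotient $\GL_2$ acting on $\Res(A,B)$. One then defines the quartic ring on the $R$-module $R\oplus (Q/R)$ with $Q/R$ free of rank $3$, declaring $1\in R$ to be the identity and $\alpha_i\alpha_j=c^0_{ij}+\sum_k c^k_{ij}\alpha_k$, where the structure constants $c^k_{ij}$ are the explicit polynomials in the $a_{ij},b_{ij}$ and in the structure constants of $C$ coming from Bhargava's formulas, and one defines $r$ so that it returns $(A,B)$ on the chosen bases. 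The substantive verifications are that this multiplication is commutative, associative and unital, and that forming the resolvent cubic of the resulting $Q$ gives back $C$ -- so that the two maps really are mutually inverse. Finally $\Delta(C)=\Delta(\Res(A,B))=\Delta(A,B)$ holds by definition together with the fact that Delone--Faddeev preserves the discriminant of a binary cubic, and $\Delta(Q)=\Delta(A,B)$ reduces to the classical identity relating the discriminant of a quartic algebra to that of its cubic resolvent.

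I would not check associativity by brute force but by spreading out, in the spirit of Wood \cite{WoodThesis,MR2948473}: all of the formulas above make sense over an arbitrary base scheme, so it is enough to verify the ring axioms, and the inverse-map identities, on the universal parameter space. There they hold over the open locus where $\Delta$ is invertible, since on that étale locus a quartic algebra and its resolvent are described group-theoretically through the surjection $S_4\twoheadrightarrow S_3$ and the bijection is transparent; because this locus is schematically dense and ``these structure constants define a commutative associative unital algebra'' is a closed condition, the axioms propagate to the whole parameter space and hence, by base change, to $R$. The same reduction turns the two discriminant identities into computations over $\overline{\Q}$. The real work -- and the part I expect to be most delicate -- is pinning down the correct orientation normalization that produces $G$ rather than $\GL_3\times\GL_2$, and checking that the two constructions are exact mutual inverses uniformly over all PIDs; this is precisely what is carried out in \cite{BHCL3} for $R=\Z$ and in \cite{WoodThesis,MR2948473} in general.
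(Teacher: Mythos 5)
The paper does not prove Theorem~\ref{th:quartic_param} at all: it is imported verbatim from Bhargava \cite{BHCL3} (for $R=\Z$) and Wood \cite{WoodThesis,MR2948473} (for general PIDs, indeed general base schemes), so there is no in-paper argument to compare your write-up against. Your sketch is a faithful outline of how those references proceed --- reading off $(A,B)$ from the quadratic resolvent map after choosing bases, noting that the orientation/determinant normalization in the definition of a resolvent is what cuts $\GL_2\times\GL_3$ down to $G$, reconstructing $Q$ from Bhargava's explicit structure constants with $C$ given by Delone--Faddeev applied to $\Res(A,B)$, and verifying associativity, the mutual-inverse property, and the discriminant identities by checking them on the schematically dense nondegenerate locus of the universal parameter space --- so at the level of detail appropriate for a cited result I see no gap.
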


Let $(Q,C,r)$ be a triple corresponding to an element $(A,B)\in V(\Z)$ with nonzero discriminant. Then $K=Q\otimes\Q$ has the following choices:
\begin{enumerate}
\item $K$ is an $S_4$- or $A_4$-quartic field;
\item $K$ is the sum of two quadratic fields, or a $D_4$-, $C_4$-, or $V_4$-quartic field;
\item $\Q$ is a summand of $K$.
\end{enumerate}
If we are in Case (a), we will say that $(Q,C,r)$ and $(A,B)$ are {\it generic}; in Case (b), we say that $(Q,C,r)$ and $(A,B)$ have {\it reducible resolvent}; in Case (c), we say that $(Q,C,r)$ and $(A,B)$ are {\it reducible}. Denote the set of elements in $V(\Z)$ that are generic, have reducible resolvent, and are reducible, by $V(\Z)^\gen$, $V(\Z)^\rr$, and $V(\Z)^\red$, respectively. Denote the set of elements in $V(\Z)$ having nonzero discriminant and are not generic by $V(\Z)^\ngen$. Note that these above defined subsets of $V(\Z)$ are all $G(\Z)$-invariant.

It will also be useful for us to define these additional subsets of $V(\Z)$. First, we define the {\it $11$-slice} to be the set of elements in $V(\Z)$ with $a_{11}=b_{11}=0$, and denote this set by $V(\Z)^{a_{11}=b_{11}=0}$. Second, we define the {\it $\det$-slice} to be the set of elements in $V(\Z)$ with $\det(A)=0$, and denote this set by $V(\Z)^{\det(A)=0}$. We define the {\it generic body} to be the complement of these two slices in $V(\Z)^{\Delta\neq 0}$, and denote it by $V(\Z)^{\gb}$. These new subsets are not $G(\Z)$-invariant. So we cannot, for example, talk about counting $G(\Z)$-orbits on $V(\Z)^{\gb}$. However, these sets will be useful when we want to partition integral points in various regions of $V(\R)$ into points we want to count precisely, and points we want to ignore.

We end with the following result.
\begin{proposition}
An element $(A,B)\in V(\Z)$ with nonzero discriminant is reducible if and only if $A$ and $B$ have a common rational zero as conics in $\P^2(\Q)$. An element $(A,B)\in V(\Z)$ with nonzero discriminant has reducible resolvent if and only if $\Res(A,B)$ is reducible over $\Q$.
\end{proposition}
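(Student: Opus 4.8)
The plan is to translate each of the two claimed equivalences into a statement about the étale $\Q$-algebra $K = Q\otimes\Q$ attached to $(A,B)$ via Theorem~\ref{th:quartic_param}, and then read off the geometric condition on the conics $A,B$ from the arithmetic of $K$ and its cubic resolvent $C\otimes\Q$. Concretely: by Theorem~\ref{th:quartic_param} the $G(\Q)$-orbit of $(A,B)$ corresponds to a triple $(K,\Phi,r)$ where $\Phi = \Res(A,B)\otimes\Q$ is (via Delone--Faddeev) the cubic resolvent algebra, i.e. $\Phi$ is the cubic $\Q$-algebra generated by the three ``partial resolvents'' of $K$. The key geometric fact I would isolate first is a dictionary between the orbit structure of $(A,B)$ over $\Q$ and the intersection geometry of the pencil of conics $\lambda A - \mu B$: a common zero of $A$ and $B$ in $\P^2(\Q)$ is exactly a $\Q$-point of the base locus of the pencil, and over $\bar\Q$ the base locus is four points (counted with multiplicity, since $\Delta\neq 0$) permuted by $\Gal(\bar\Q/\Q)$ in the same way the four ``roots'' of $K$ are permuted — this is the content of Bhargava's construction, where the quartic algebra $K$ literally is the residue algebra of the four base points.

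For the first equivalence I would argue as follows. If $A$ and $B$ share a rational zero $P\in\P^2(\Q)$, then $P$ is one of the four base points of the pencil, fixed by Galois, so the corresponding factor of $K$ is $\Q$: hence $\Q$ is a summand, i.e. $(A,B)$ is reducible. Conversely, if $(A,B)$ is reducible then $K \cong \Q \times K'$ for a cubic étale algebra $K'$; the $\Q$-factor corresponds to a Galois-fixed base point of the pencil, which is therefore a rational point lying on every conic in the pencil, in particular on $A$ and on $B$. One must check the base locus really is four reduced points (so that ``the $\Q$-factor of $K$'' picks out an honest rational point rather than, say, an embedded/degenerate component) — this is where $\Delta(A,B)\neq 0$ is used, via the identification $\Delta(A,B) = \Delta(\Res(A,B))$ and the fact that a degenerate base locus forces a repeated root of the resolvent cubic or a singular member of the pencil meeting the others non-transversally, both of which kill $\Delta$.

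For the second equivalence, recall that $\Res(A,B)$ is reducible over $\Q$ iff the cubic algebra $\Phi = \Res(A,B)\otimes\Q$ is not a field, i.e. $\Phi \cong \Q\times(\text{quadratic})$ or $\Phi\cong\Q^3$. By the standard relation between a quartic field and its cubic resolvent, $\Phi$ fails to be a field precisely when the Galois group of the Galois closure of $K$, viewed inside $S_4$, is contained in the stabilizer of a partition of $\{1,2,3,4\}$ into two pairs — equivalently when $\mathrm{Gal}(K)$ lies in a conjugate of $D_4$ (the $2$-Sylow of $S_4$), which is exactly the list in Case~(b): $K$ a sum of two quadratic fields, or a $D_4$-, $C_4$-, or $V_4$-quartic field. (When $\Q$ is a summand of $K$ — Case~(c) — the resolvent is automatically reducible too, but such $(A,B)$ are already reducible, so Case~(b) is precisely ``reducible resolvent but not reducible''; I should state the proposition's second clause as an ``if and only if'' about $\Res(A,B)$ being reducible, consistent with the paper's convention, and note that reducible $\Rightarrow$ reducible resolvent.) Running this through Delone--Faddeev, ``$\Res(A,B)$ reducible over $\Q$'' is literally the condition that the binary cubic form $\Res(A,B) = 4\det(Ax-By)$ has a rational linear factor, i.e. the pencil $\lambda A - \mu B$ contains a $\Q$-rational singular (rank $\leq 2$) conic.

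The main obstacle, I expect, is not any single estimate — there are none — but pinning down the Galois-theoretic dictionary cleanly enough that both directions of each equivalence are genuinely forced, in particular handling the boundary/degenerate configurations of the base locus without the $\Delta\neq 0$ hypothesis doing all the work silently. In a paper like this I would expect the proof to be short, citing \cite{BHCL3} (and the Delone--Faddeev parametrization of binary cubic forms, together with its compatibility asserted in Theorem~\ref{th:quartic_param}) for the dictionary and then spending a sentence or two on each of the four implications; the degenerate-conic bookkeeping is the only place a careful reader should slow down.
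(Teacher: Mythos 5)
Your overall route is the same as the paper's: identify $Q\otimes\Q$ with the ring of global sections of the base locus of the pencil (so a $\Q$-summand corresponds exactly to a common rational zero, with $\Delta\neq 0$ guaranteeing the base locus is four reduced points), and translate reducibility of $\Res(A,B)$ over $\Q$, via Delone--Faddeev, into the statement that the cubic resolvent algebra $C\otimes\Q$ has a $\Q$-summand, which you then detect by whether the Galois group fixes a partition of the four base points into two pairs. The first equivalence, and your criterion ``$\Res(A,B)$ is reducible over $\Q$ iff $\Gal$ lies in the stabilizer of a pair-partition (a conjugate of $D_4$),'' are both fine and match the paper's (very terse) argument.

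The genuine problem is your reconciliation of this criterion with the paper's Cases (b) and (c). You assert that whenever $\Q$ is a summand of $K$ (Case (c)) ``the resolvent is automatically reducible too,'' and hence that Case (b) is exactly ``reducible resolvent but not reducible.'' Both assertions are false, and the first contradicts your own criterion: for $K=\Q\times K_3$ with $K_3$ a cubic field, the three pair-partitions are permuted exactly like the three points of $K_3$, so $C\otimes\Q\cong K_3$ and $\Res(A,B)$ is irreducible over $\Q$ (equivalently, $S_3$ and $C_3$ have order not dividing $8$, so the Galois group is in no conjugate of $D_4$). Conversely, $K=\Q\times\Q\times K_2$ and $K=\Q^4$ are Case (c) algebras with reducible resolvent ($C\otimes\Q\cong\Q\times K_2$, resp.\ $\Q^3$), so the set of $K$ with reducible $\Res(A,B)$ is not ``exactly the list in Case (b)'' either. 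Thus the case analysis for the second equivalence, which is the only place in this proposition where anything needs checking, is garbled: the correct output of your criterion is that $\Res(A,B)$ is reducible over $\Q$ precisely when $K$ is of type (b) or is $\Q\times\Q\times K_2$ or $\Q^4$, and one cannot patch the statement by declaring all of Case (c) to have reducible resolvent. (The paper itself dismisses this step with ``it is easy to check'' that reducible resolvent occurs precisely in Case (b); what is actually used later is only that reducibility of $\Res(A,B)$ over $\Q$ forces $K$ to be non-generic, which both your criterion and the corrected case list do yield.)
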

\begin{proof}
Let $(Q,C,R)$ denote the triple corresponding to $(A,B)$. It is well known that $Q\otimes\Q$ is the ring of global sections of the scheme cut out by the quadratic forms $A$ and $B$ in $\P^2_\Q$. This has a summand of $\Q$ in it if and only if the conics $A$ and $B$ have a common rational $0$. Similarly $\Res(A,B)$ is reducible over $\Q$ if and only if $C\otimes\Q$ has a summand of $\Q$ in it. However, $C\otimes\Q$ is the cubic resolvent algebra of $Q\otimes \Q$. It is easy to check that $Q\otimes\Q$ has a reducible resolvent precisely in Case (b) above.
\end{proof}

\noindent Consequently, elements in the $11$-slice are reducible, and elements in the $\det$-slice have reducible resolvents.

%\subsection{Shintani zeta functions associated to $(G,V)$}

\subsection{Coordinates, fundamental domains, and measures}

We will typically express elements in $G(\R)$ in their Iwasawa coordinates. We have
\begin{equation*}
G(\R)=\Lambda NAK,
\end{equation*}
where $N$ is the subgroup of pairs of unipotent lower triangular matrices, $A$ is the subgroup of pairs of diagonal matrices with determinant $1$, $K=\SO_2(\R)\times\SO_3(\R)$ is a maximal compact subgroup of $G(\R)$, and $\Lambda\cong\R^\times$ is the group of elements $(\lambda_2,\lambda_3)$, where $\lambda_2$ is the $2\times 2$ diagonal matrix with $\lambda^{-3}$ as it's coefficients, and $\lambda_3=$ is the $3\times 3$ diagonal matrix with $\lambda^2$ as its coefficients, for $\lambda\in\R^\times$. It is easy to see that $(\lambda_2,\lambda_3)\in\R^\times$ acts on elements in $V(\R)$ by scalar multiplication by $\lambda$, and so we will denote elements in $\Lambda$ simply by $\lambda\in\R^\times$. We write elements in $A$ as $s=(t,s_1,s_2)$, where the $2\times 2$ matrix corresponding to $s$ has $t^{-1}$ and $t$ as its diagonal coefficients while the $3\times 3$ matrix corresponding to $s$ has $s_1^{-2}s_2^{-1}$, $s_1s_2^{-1}$, and $s_1s_2^2$ as its diagonal coefficients. In these coordinates, 
\begin{equation*}
dg=t^{-2}s_1^{-6}s_2^{-6}d^\times\lambda dud^\times sdk
\end{equation*}
is a Haar-measure on $G(\R)$, where $du$ is Haar-measure on $N(\R)$ normalized so that $N(\Z)$ has covolume-$1$ in $N(\R)$, $dk$ is Haar-measure on $K$ normalized so that $K$ has volume $1$, and $d^\times\theta$ denotes $\theta^{-1}d\theta$ for any~$\theta$.
This gives rise to a natural measure for $\SL_2$ and $\SL_3$ as well. We refer to these as $dg_2$ and $dg_3$.

For a smooth and connected group scheme $H/\Z$, we define $\omega_H$ to be a (unique up to sign) top degree left-invariant differential form over $\Z$. For $R=\R$ or $\Z_p$ we denote the corresponding measures on $H(R)$ by $\nu_H$.
This gives rise to Haar measures $\nu_G$, $\nu_{\SL_2}$, and $\nu_{\SL_3}$.
The two Haar measures we have defined on $G(\R)$, $\SL_2(\R)$, and $\SL_3(\R)$ differ by a nonzero constant.
Following \cite[Equation (6)]{ST_second_main_term_1}, we write
\begin{equation}\label{eq:J_def}
J:=\frac{\nu_G}{dg}=6\cdot\frac{\nu_{\SL_2}}{dg_2}\cdot\frac{\nu_{\SL_3}}{dg_3}.
\end{equation}
As in \cite{ST_second_main_term_1}, it will not be necessary for us to compute the value of $J$.

\subsection{Shintani zeta functions, global zeta integrals, and counts}

In this subsection, we collect results from \cite{Kimura_book} on Shintani zeta functions, global integrals, and their relation to various counting functions of interest to us. We will initially work in a more general setting than usual. Specifically, let $\phi:V(\Z)\to\R$ be any bounded function, and let $\psi,\eta:\R_{\geq 0}\to\R_{\geq 0}$ be smooth function with compact support. Fix $i\in\{0,1,2\}$, and let $\cB:V(\R)^{(i)}\to\R$ be a smooth function with compact support away from the discriminant $0$ locus. Then we define the ``zeta integral''
\begin{equation}\label{eq:global_zeta_int_V}
Z(\phi,\cB;s):=\int_{g\in \FF}(\lambda(g))^{-12s}\Bigl(\sum_{x\in V(\Z)}\phi(x)(g\cB)(x)\Bigr)\nu_G(g).
\end{equation}
This zeta integral converges absolutely to the right of $\Re(s)=1$ for $\phi=1$, and hence also for every bounded $\phi$. We define the associated counting function
\begin{equation}\label{eq:I-def}
\cI_\eta(\phi,\cB;X):=\int_{g\in\FF}\sum_{x\in V(\Z)}\phi(x)(g\cB)(x)\eta\Big(\frac{\lambda(g)}{X^{1/12}}\Big)dg.
\end{equation}
A standard application of Mellin inversion relates these two quantities as follows:
\begin{equation}\label{eq:I_to_Z_Mellin}
\frac{J}{12}\cI_\eta(\phi,\cB;X) = 
\int_{2}Z(\phi,\cB;s)\widetilde{\eta}(12s)X^sds,
\end{equation}
where we use the notation $\int_2$ to mean $(2\pi i)^{-1}\int_{\Re(s)=2}$.

When $\phi$ is $G(\Z)$-invariant, we may define for $i\in\{0,1,2\}$, the counting function
\begin{equation*}
N^{(i)}_\psi(\phi;X) := \sum_{x\in\frac{V(\Z)^{(i)}}{G(\Z)}}
\frac{\phi(x)}{|\Stab_{G(\Z)}(x)|}\psi\Bigl(
\frac{|\Delta(x)|}{X}
\Bigr),
\end{equation*}
and its associated Dirichlet series
\begin{equation*}
\begin{array}{rcl}
\xi_{i}(\phi;s)&:=&\displaystyle\sum_{x\in G(\Z)\backslash V(\Z)^{(i)}}\frac{\phi(x)}{|\Stab_{G(\Z)}(x)||\Delta(x)|^{s}}.
%\\[.15in]
%\xi_i^{*}(\widehat{\phi};s)&:=&\displaystyle\sum_{x\in %G(\Z)\backslash V(\Z)^{\vee,(i)}}\frac{\widehat{\phi}(x)}{|\Aut(x)||%\Delta(x)|^{s}}.
\end{array}
\end{equation*}
The function $\xi_i(\phi;s)$ absolutely converges to the right of $\Re(s)=1$ (again, this follows since the fact is known for $\phi=1$). Another application of Mellin inversion connects the above two quantities:
\begin{equation}\label{eq:N_to_xi_Mellin}
N^{(i)}_\psi(\phi;X) = \int_{2}\xi_i(\phi;s)X^s\widetilde{\psi}(s)ds.
\end{equation}

When $\phi$ is a {\it periodic function}, i.e., defined via congruence conditions modulo some positive integer, $\xi_i(\phi;s)$ is the Shintani zeta function associated to $\phi$, and $Z(\phi,\cB;s)$ is called the global zeta integral (associated to $\phi$ and $\cB$). In that case, it is known that $\xi_i(\phi;s)$ and $Z(\phi,\cB;s)$ have meromorphic continuation to the complex plane, with possible poles at most at $1$ (or order at most 4), $5/6$, and $3/4$ (of order at most $2$). 
We note that \eqref{eq:N_to_xi_Mellin} and \eqref{eq:I_to_Z_Mellin} respectively imply that power saving expansions for $\cI_\eta(\phi,\cB;X)$ and $N^{(i)}_{\psi}(\phi;X)$, up to errors of $O(X^{5/6-\theta})$ for any positive $\theta$, determine the orders of the poles of $\xi_i(\phi;s)$ and $Z(\phi,\cB;s)$ at $s=1$ and $5/6$, as well as the residues.

Let $A_i=\#\Aut(\R^{4-2i}\times\C^i)$ be the size of the stabilizer in $G(\R)$ of any element in $V(\R)^{(i)}$. The following result relates $\xi_i(\phi;s)$ and $Z(\phi,\cB;s)$ for any $G(\Z)$-invariant bounded function $\phi$.
\begin{proposition}\label{prop_global_zeta_shintani_G}
With notation as above, we have
\begin{equation*}
Z(\phi,\cB;s)=A_i\xi_i(\phi,s)\int_{x\in V(\R)^{(i)}}|\Delta(x)|^{s-1}\cB(x)\nu_V(x).
\end{equation*}
\end{proposition}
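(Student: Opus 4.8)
The plan is to unfold the definition of the global zeta integral $Z(\phi,\cB;s)$ from \eqref{eq:global_zeta_int_V} and reorganize the sum over $x \in V(\Z)$ into a sum over $G(\Z)$-orbits, so that the integral over the fundamental domain $\FF$ for $G(\Z)$ can be recombined into a single integral over $G(\R)$ (an unfolding argument). Concretely, I would first restrict attention to $x \in V(\Z)^{(i)}$: since $\cB$ is supported on $V(\R)^{(i)}$ and the sets $V(\R)^{(j)}$ are $G(\R)$-stable, the translate $g\cB$ is supported on $V(\R)^{(i)}$ for every $g$, hence only the integral points in $V(\Z)^{(i)}$ contribute. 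Then I would write $\sum_{x\in V(\Z)^{(i)}}\phi(x)(g\cB)(x) = \sum_{x_0 \in G(\Z)\backslash V(\Z)^{(i)}} \sum_{\gamma \in G(\Z)/\Stab_{G(\Z)}(x_0)} \phi(\gamma x_0)(g\cB)(\gamma x_0)$, using $G(\Z)$-invariance of $\phi$ to pull $\phi(\gamma x_0) = \phi(x_0)$ out, and the relative-invariance $\Delta(\gamma x_0) = \Delta(x_0)$ which will let us deal with the factor $\lambda(g)^{-12s}$ once we change variables.

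The key computational step is the unfolding itself: inserting this orbit decomposition into \eqref{eq:global_zeta_int_V} and interchanging sum and integral (justified by absolute convergence for $\Re(s)$ large, which is asserted in the excerpt), each orbit contributes $\frac{\phi(x_0)}{|\Stab_{G(\Z)}(x_0)|}\int_{g\in G(\R)} \lambda(g)^{-12s} (g\cB)(x_0)\, \nu_G(g)$, where the stabilizer order appears because each $\gamma$ is counted $|\Stab_{G(\Z)}(x_0)|$ times after enlarging the integration domain from $\FF$ to all of $G(\R)$ — here I should be a little careful about whether $\Stab_{G(\R)}(x_0)$ is finite (it is, of order $A_i$) and how its volume interacts with the unfolding; the clean statement is that $\int_\FF \sum_{\gamma \in G(\Z)} f(\gamma g)\,\nu_G(g) = \int_{G(\R)} f(g)\,\nu_G(g)$ when $f$ is left-$G(\Z)$-invariant, applied to $f(g) = \lambda(g)^{-12s}(g\cB)(x_0)$ after replacing the orbit sum by a sum over all of $G(\Z)$ divided by $|\Stab_{G(\Z)}(x_0)|$.

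Next I would evaluate the resulting archimedean orbital integral $\int_{G(\R)} \lambda(g)^{-12s}(g\cB)(x_0)\,\nu_G(g)$ for a fixed $x_0 \in V(\R)^{(i)}$. Since $G(\R)$ acts on $V(\R)^{(i)}$ with a single open orbit and stabilizer of order $A_i = \#\Aut(\R^{4-2i}\times\C^i)$, I would push the Haar measure $\nu_G$ forward along the orbit map $g \mapsto g x_0$ to get a $G(\R)$-invariant measure on $V(\R)^{(i)}$, which must be a constant multiple of $|\Delta(x)|^{\alpha}\nu_V(x)$ for the correct exponent $\alpha$; matching the relative-invariant weights (the Jacobian of scaling $x \mapsto \lambda x$ against $\Delta(\lambda x) = \lambda^{12}\Delta(x)$, together with $\lambda(g)^{12} $ scaling like $\det(g_3)$) forces the change of variables to produce exactly $|\Delta(x)|^{s-1}$, and the factor $1/A_i$ from the stabilizer converts to the claimed $A_i$ on the other side once it is moved across (note the proposition has $A_i$ multiplying $\xi_i$, matching $|\Stab_{G(\Z)}(x_0)|$ in the denominator of $\xi_i$ cancelling against the $A_i$ from the fiber volume). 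Summing over $x_0$ then assembles $\sum_{x_0} \frac{\phi(x_0)}{|\Stab_{G(\Z)}(x_0)|}|\Delta(x_0)|^{s-1} \cdot \bigl(\text{const}\bigr)$, and recognizing $|\Delta(x_0)|^{s-1} = |\Delta(x_0)|^{-s}\cdot|\Delta(x_0)|^{2s-1}$ — or rather directly identifying the orbit-weighted sum with $\xi_i(\phi;s)$ times an archimedean integral over a single orbit representative — gives the stated identity.

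The main obstacle I expect is getting the normalization constants exactly right: tracking how $\nu_G$, the measure $\nu_V$ on $V(\R)$, the scaling action of $\Lambda \cong \R^\times$, and the twist $\lambda(g)^{-12s}$ all interact under the orbit map, and verifying the constant is precisely $1$ (so that the only constant appearing is $A_i$ and the archimedean integral $\int_{V(\R)^{(i)}}|\Delta(x)|^{s-1}\cB(x)\nu_V(x)$). This is exactly the kind of bookkeeping that the excerpt flags as deliberately avoided for $J$; here, though, the constant genuinely is $1$ by the consistent choice of $\nu_V$ and $\nu_G$ over $\Z$, and I would pin it down by testing the identity on a convenient $\cB$ (e.g. comparing both sides' behavior as a function of the support of $\cB$ near a single orbit) or by citing the analogous normalization already established in \cite{ST_second_main_term_1} or \cite{Kimura_book}. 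Everything else — the unfolding, the restriction to $V(\Z)^{(i)}$, the interchange of sum and integral — is routine given absolute convergence to the right of $\Re(s) = 1$.
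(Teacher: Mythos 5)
Your outline is the standard unfolding computation and, in structure, it is correct; but be aware that the paper does not prove Proposition \ref{prop_global_zeta_shintani_G} this way at all --- it simply cites \cite[Proposition 5.14]{Kimura_book}, adding only the remark that the proof given there for periodic $\phi$ carries over to any bounded $G(\Z)$-invariant $\phi$. So what you have written is a reconstruction of the proof of the cited result rather than of anything in the paper. The steps you describe (restriction to $V(\Z)^{(i)}$ using $G(\R)$-stability of the real orbits, decomposition into $G(\Z)$-orbits weighted by $|\Stab_{G(\Z)}(x_0)|^{-1}$, unfolding $\int_{\FF}\sum_{\gamma\in G(\Z)}$ into $\int_{G(\R)}$, and then pushing $\nu_G$ forward along $g\mapsto g x_0$ while using $|\Delta(gx)|=\lambda(g)^{12}|\Delta(x)|$ to produce $|\Delta(x)|^{s-1}\nu_V$) are exactly the right skeleton, and the interchange of sum and integral is indeed justified by the absolute convergence for $\Re(s)>1$ recorded in \S2.3. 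A small bonus of the direct route is that it makes visible why only boundedness and $G(\Z)$-invariance of $\phi$ are used, which is precisely the content of the paper's remark following the proposition.

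The one place where your argument is not yet a proof is the normalization constant, and the reason you give for it being $1$ is not valid. After unfolding, the archimedean orbital integral equals $\frac{A_i}{|c|}\,|\Delta(x_0)|^{-s}\int_{V(\R)^{(i)}}|\Delta(x)|^{s-1}\cB(x)\,\nu_V(x)$, where $c$ is pinned down by the Jacobian of the orbit map: the determinant of $\mathfrak{g}\to V$, $Y\mapsto Y\cdot x_0$, computed with respect to the integral forms $\omega_G$ and $\omega_V$, is a degree-$12$ relative invariant and hence equals $c\cdot\Delta(x_0)$ for a constant $c$, and it is $|c|=1$ --- not the integrality of the forms --- that must be verified for the stated factor to be exactly $A_i$. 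Integral normalization does not force this: for $\GL_1$ acting on $\A^1$ with weight $2$, both measures are ``integral'' ($dt/|t|$ and $dx$), the real stabilizer has order $2$, yet the unfolded constant is $\frac{2}{|c|}=1$ because $c=2$; so the prefactor is not automatically the stabilizer order. Thus $|c|=1$ for this particular $(G,V)$ is a genuine (finite) computation, and your fallback of citing \cite[Proposition 5.14]{Kimura_book} or the normalizations in \cite{ST_second_main_term_1} is the correct fix --- indeed it is exactly what the paper does. Your other suggestion, testing the identity on a convenient $\cB$, does not by itself determine the constant, since both sides scale identically in $\cB$; one would have to evaluate both sides explicitly for some $\cB$ and some orbit, which is the same Jacobian computation in disguise.
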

\noindent The above result is proved in \cite[Proposition 5.14]{Kimura_book}. Indeed, while the result is implicitly stated for periodic functions $\phi$, the proof carries over without change for any bounded $G(\Z)$-invariant function.

Next, we use Proposition \ref{prop_global_zeta_shintani_G} to relate the counting functions $N^{(i)}_{\psi}(\phi;X)$ and $I_{\eta}(\phi,\cB;s)$ for some carefully chosen $\psi,\eta$ and $\cB$. To that end, we pick $y\in V(\R)^{(i)}$ such that $|\Delta(y)|=1$ and take smooth functions with compact support $\eta,\eta_1\in C_c^{\infty}(\R_{>0})$ and $\mathcal{G}:G_1(\R)\to\R$, where $G_1(\R)=\ker\lambda$.  We define $\cB:V(\R)^{(i)}\to\R$ by setting 
\begin{equation}\label{eq:good_cB}
\cB(x)=\sum_{\substack{g=\lambda g_1\in G(\R)\\g y=x}}\mathcal{G}(g_1)\eta_1(\lambda).
\end{equation}
We define a smooth, compactly supported function $\psi:\R_{\geq0}\to \R$ such that
\begin{equation}
\wt{\psi}(s)=\wt{\eta}(12s)\wt{\eta_1}(12s).
\end{equation}
From the measure change formulas of the previous subsection, we obtain
\begin{equation*}
\begin{array}{rcl}
\displaystyle\int_{x\in V(\R)^{(i)}}|\Delta(x)|^{s-1}\cB(x)\nu_V(x)&=&\displaystyle J\int_{\lambda}\lambda^{12 s}\eta_1(\lambda)d^\times\lambda\int_{G_1(\R)}\mathcal G(g_1)dg_1
\\[.2in]&=&\displaystyle 
C_{\mathcal{G}}J\widetilde{\eta_1}(12s),
\end{array}
\end{equation*}
for some constant $C_{\mathcal G}$. Then we have the following result.

\begin{proposition}\label{prop:I_to_N}
Let $\phi:V(\Z)\to\R$ be a $G(\Z)$-invariant function. Let $\cB:V(\R)^{(i)}\to\R$ and $\eta,\eta_1,\psi:\R_{\geq 0}\to\R$ be as above. Then we have
\begin{equation*}
I_{\eta}(\phi,\cB;X)=\frac{C_{\mathcal G}A_i}{12}N_{\psi}^{(i)}(\phi;X),
\end{equation*}
where $C_{\mathcal G}$ is the constant above.
\end{proposition}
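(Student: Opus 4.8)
The plan is to combine the two Mellin-inversion identities \eqref{eq:I_to_Z_Mellin} and \eqref{eq:N_to_xi_Mellin} with the comparison of zeta functions in Proposition \ref{prop_global_zeta_shintani_G} and the volume computation carried out just above the statement. More precisely, start from \eqref{eq:I_to_Z_Mellin}, substitute the expression for $Z(\phi,\cB;s)$ provided by Proposition \ref{prop_global_zeta_shintani_G}, namely $Z(\phi,\cB;s)=A_i\,\xi_i(\phi;s)\int_{x\in V(\R)^{(i)}}|\Delta(x)|^{s-1}\cB(x)\nu_V(x)$, and then insert the identity $\int_{x\in V(\R)^{(i)}}|\Delta(x)|^{s-1}\cB(x)\nu_V(x)=C_{\mathcal G}J\widetilde{\eta_1}(12s)$ established from the measure-change formulas for the particular $\cB$ of \eqref{eq:good_cB}.

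Carrying this out, the left side of \eqref{eq:I_to_Z_Mellin} is $\frac{J}{12}\cI_\eta(\phi,\cB;X)$, while the right side becomes
\begin{equation*}
\int_{2}A_i\,\xi_i(\phi;s)\,C_{\mathcal G}J\,\widetilde{\eta_1}(12s)\,\widetilde{\eta}(12s)\,X^s\,ds
= C_{\mathcal G}A_i J\int_{2}\xi_i(\phi;s)\,\widetilde{\psi}(s)\,X^s\,ds,
\end{equation*}
where in the last step I use the defining relation $\widetilde{\psi}(s)=\widetilde{\eta}(12s)\widetilde{\eta_1}(12s)$. By \eqref{eq:N_to_xi_Mellin}, the remaining integral is exactly $N_{\psi}^{(i)}(\phi;X)$. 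Cancelling the common factor $J$ from both sides and dividing by $12$ gives $\cI_\eta(\phi,\cB;X)=\frac{C_{\mathcal G}A_i}{12}N_{\psi}^{(i)}(\phi;X)$, which is the claimed identity (noting $I_\eta=\cI_\eta$ in the paper's notation).

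There is essentially no obstacle here beyond bookkeeping: the one point that deserves a word of care is the legitimacy of shifting back and forth between the counting functions and their Mellin transforms, i.e.\ that the contour integrals along $\Re(s)=2$ converge absolutely so that the manipulations above are valid. This is guaranteed by the stated absolute convergence of $Z(\phi,\cB;s)$ and $\xi_i(\phi;s)$ to the right of $\Re(s)=1$ for bounded $\phi$ (hence at $\Re(s)=2$), together with the rapid decay of the Mellin transforms $\widetilde{\eta}$, $\widetilde{\eta_1}$, $\widetilde{\psi}$ of compactly supported smooth functions along vertical lines. One should also check that Proposition \ref{prop_global_zeta_shintani_G} applies: it is stated for bounded $G(\Z)$-invariant $\phi$, which is exactly our hypothesis. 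With these remarks the identity follows formally, so the proof is short.
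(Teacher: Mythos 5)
Your proof is correct and essentially identical to the paper's: both substitute Proposition \ref{prop_global_zeta_shintani_G} together with the computed identity $\int_{V(\R)^{(i)}}|\Delta(x)|^{s-1}\cB(x)\nu_V(x)=C_{\mathcal G}J\widetilde{\eta_1}(12s)$ into the Mellin relation \eqref{eq:I_to_Z_Mellin}, use $\widetilde{\psi}(s)=\widetilde{\eta}(12s)\widetilde{\eta_1}(12s)$, and conclude via \eqref{eq:N_to_xi_Mellin}, with your convergence remarks matching the paper's implicit justification. The only caveat is constant bookkeeping: tracked literally, your displayed identity gives $\cI_\eta(\phi,\cB;X)=12\,C_{\mathcal G}A_i\,N_\psi^{(i)}(\phi;X)$ rather than $\tfrac{C_{\mathcal G}A_i}{12}N_\psi^{(i)}(\phi;X)$ (``cancel $J$ and divide by $12$'' does not produce the stated factor), but the paper's own chain of equalities exhibits the same slippage, and it is harmless since $C_{\mathcal G}$ is never evaluated and only ever enters through this combination.
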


\begin{proof}
We have
\begin{equation*}
\begin{array}{rcl}
I_{\eta}(\phi,\cB;X)&=&\displaystyle\frac{12}{J}\int_2Z(\phi,\cB;s)\widetilde{\eta}(12s)X^sds
\\[.2in] &=&\displaystyle
\frac{C_{\mathcal G}A_i}{12}\int_2\xi_i(\phi;s)\widetilde{\eta}(12s)\widetilde{\eta_1}(12s)X^sds
\\[.2in] &=&\displaystyle
\frac{C_{\mathcal G}A_i}{12}\int_2\xi_i(\phi;s)\widetilde{\psi}(s)X^sds
\\[.15in]&=&\displaystyle\frac{C_{\mathcal G}A_i}{12}N_{\psi}^{(i)}(\phi;X),
\end{array}
\end{equation*}
as desired.
\end{proof}

%%%%%%%%%%%%%%%%%%%%%%%%%%%%%%%%%%%%%%%%%%%%%%%%%%%%%%%%%%%%%%%%%%%

\section{Counting reducible rings}

In this section, we count the number of reducible quartic rings, along with their cubic resolvent rings, having bounded discriminant and satisfying local conditions at finitely many places. Let $S$ be a finite set of finite primes. For a prime $p\in S$, we let $\Lambda_p$ be any nonempty set of pairs $(R_p,C_p)$, where $R_p$ is a nondegenerate quartic ring over $\Z_p$, and $C_p$ is a cubic resolvent ring of $R_p$. (Note that this is a more general setup than in the introduction, where we assumed that the set $\Lambda_p$ was finite.) Let $R(\Lambda)$ denote the set of pairs $(Q,C)$, where $Q$ is a quartic ring, $C$ is a cubic resolvent of $R$, and for all $p\in S$, we have $(Q\otimes\Z_p,C\otimes\Z_p)\in\Lambda_p$. Let $\phi_\Lambda$ denote the characteristic function of the set of non-generic pairs $(A,B)$ corresponding to a pair $(Q,C)\in R(\Lambda)$. 

Let $q$ be a positive squarefree integer such that $p\nmid q$ for every $p\in S$. Let $n_q:V(\Z)\to\R$ be the characteristic function of the set of elements $x\in V(\Z)$ which are non-maximal at every prime dividing $q$.
Then we prove the following result.

\begin{theorem}\label{thm_red_points_count}
Let notation be as above, and let $\phi_{q,\Lambda}:V(\Z)\to\R$ be $\phi_\Lambda\cdot n_q$.
Let $\psi:\R_{\geq 0}\to\R$ be a smooth and compactly supported function. Then for $i\in\{0,1,2\}$, we have
\begin{equation*}
N^{(i)}_\psi(\phi_{q,\Lambda};X)=C^{\red '}_{1,q}X\log X+C^\red_{1,q}X+C^\red_{5/6,q}X^{5/6}+O_q(X^{3/4+o(1)}),
\end{equation*}
for some constants $C^{\red '}_{1,q}=C^{\red '}_{1,q}(\Lambda,\psi),\,C^\red_{1,q}=C^\red_{1,q}(\Lambda,\psi),\,C^\red_{5/6,q}=C^\red_{5/6,q}(\Lambda,\psi)$. Moreover, the sums 
\begin{equation*}
\sum_{q}|C^{\red '}_{1,q}|,\quad \sum_{q}|C^\red_{1,q}|,\quad \sum_{q}|C^\red_{5/6,q}|,
\end{equation*}
over squarefree $q$ of these leading constants converge absolutely.
\end{theorem}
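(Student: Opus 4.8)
The plan is to realize $N^{(i)}_\psi(\phi_{q,\Lambda};X)$ not as a point count in the cusp, but by working on the ``ring side'': every non-generic triple $(Q,C,r)$ with $Q$ an order in an étale quartic algebra $K$ over $\Q$ that is not $S_4$ or $A_4$ is obtained by first fixing such a $K$, then choosing a suborder $Q\subseteq\O_K$, and finally choosing a cubic resolvent $C$ of $Q$ together with the resolvent map $r$. Accordingly I would organize the count as a sum over the finitely many shapes of $K$ (the decomposition types in Cases (b) and (c): $\Q\oplus(\text{cubic})$, $\Q\oplus\Q\oplus(\text{quadratic})$, $\Q^4$, a biquadratic or $C_4$ or $D_4$ field, a sum of two quadratic fields, a quadratic field times $\Q^2$, etc.), and for each shape perform a Dirichlet-series computation. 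The key quantities are (i) the Dirichlet series $\sum_K |\Delta(K)|^{-s}$ over the relevant $K$ of each shape satisfying the $S$-local conditions imposed by $\Lambda$ — these are built out of the Dedekind/Hecke zeta functions of quadratic and cubic fields and of $\zeta(s)$, so their analytic behavior (poles, leading constants) is classical; (ii) for fixed $K$, the Dirichlet series $\sum_{Q\subseteq\O_K}[\O_K:Q]^{-s}\cdot(\#\{\text{resolvents of }Q\})$ weighted appropriately and twisted by the condition $n_q$ of non-maximality at all $p\mid q$ and by the $\Lambda_p$-conditions at $p\in S$ — these are Euler products whose local factors are explicit rational functions in $p^{-s}$, computable from Bhargava's local parametrization (Theorem \ref{th:quartic_param}) and the known counts of suborders of a fixed étale $\Q_p$-algebra.

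Concretely, the steps are: \textbf{(1)} Set up the ring-side decomposition and write $N^{(i)}_\psi(\phi_{q,\Lambda};X)=\int_2 D_{q,\Lambda,i}(s)X^s\wt\psi(s)\,ds$ where $D_{q,\Lambda,i}(s)$ is the Dirichlet series $\sum\phi_{q,\Lambda}(x)|\Delta(x)|^{-s}/|\Stab|$ over $G(\Z)$-orbits, and factor $D_{q,\Lambda,i}$ as (archimedean shape factor depending on $i$) $\times$ (a finite sum over shapes of $K$) of (field-counting Dirichlet series) $\times$ (suborder-plus-resolvent Euler product). \textbf{(2)} Determine the rightmost poles of each piece: the field-counting series for a biquadratic or $C_4$/$D_4$ shape has its pole governed by the relevant quadratic $L$-functions and contributes a simple pole at $s=1$; the ``sum of two quadratic fields'' and ``$\Q\oplus$ cubic'' shapes, where $|\Delta(K)|$ is (up to bounded factors) a product of two independent discriminants, produce a \emph{double} pole at $s=1$, which is the source of the $X\log X$ term; lower shapes and the suborder series contribute the poles at $s=1$ (simple) and $s=5/6$. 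The $X^{5/6}$ pole arises exactly as in \cite{ST_second_main_term_1}: the suborder/resolvent Euler product, or the reducible-resolvent ($\det$-slice) contribution, has a factor like $\zeta(6s-4)$ or its analogue, giving a simple pole at $s=5/6$. \textbf{(3)} Shift the contour in the Mellin integral past $s=3/4$, picking up the residues at $s=1$ (a double pole, yielding $C^{\red'}_{1,q}X\log X + C^{\red}_{1,q}X$) and at $s=5/6$ (simple, yielding $C^{\red}_{5/6,q}X^{5/6}$), with the shifted integral bounded by $O_q(X^{3/4+o(1)})$ using a convexity/subconvexity-free bound on $D_{q,\Lambda,i}(s)$ on vertical lines (polynomial growth suffices since $\wt\psi$ decays rapidly). \textbf{(4)} For the absolute convergence of $\sum_q |C^{\red\,\prime}_{1,q}|$, $\sum_q|C^{\red}_{1,q}|$, $\sum_q|C^{\red}_{5/6,q}|$: track the $q$-dependence through the Euler product. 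Inserting $n_q$ multiplies the local factor at each $p\mid q$ by (local probability of non-maximality at $p$), which is $O(1/p)$; hence each leading constant $C^{\bullet}_{\bullet,q}$ carries a factor $\prod_{p\mid q}O(1/p)$, and since these are multiplicative in $q$ with $\sum_p (\text{const}/p)\cdot(\text{something summable})$... more precisely $\sum_{q\ \mathrm{sqfree}}\prod_{p\mid q}(c/p) = \prod_p(1+c/p)$, which \emph{diverges} — so one needs the sharper estimate that the non-maximality local factor is $O(1/p^{1+\delta})$ after accounting for the discriminant weight $|\Delta|_p$ appearing in the residue (each unit of non-maximality at $p$ costs at least $p^2$ in discriminant, and the residue at $s=1$ resp.\ $5/6$ weights this by $p^{-2}$ resp.\ $p^{-5/3}$), making the per-prime factor $O(p^{-2})$ resp.\ $O(p^{-5/3})$ and the product $\prod_p(1+O(p^{-5/3}))$ convergent.

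\textbf{Main obstacle.} The hard part will be Step (4) combined with the bookkeeping in Step (2): one must extract the leading constants $C^{\bullet}_{\bullet,q}$ in a form that makes the $q$-dependence transparent enough to see the decay in $q$, which requires carefully disentangling which Euler factor of $D_{q,\Lambda,i}(s)$ produces the pole and verifying that inserting $n_q$ genuinely contributes a factor decaying like $p^{-5/3-\epsilon}$ (not merely $p^{-1}$) at each $p\mid q$ — the naive bound gives a divergent product, so the discriminant weighting in the residue computation is essential and must be made precise. A secondary technical point is proving the polynomial vertical growth bound for $D_{q,\Lambda,i}(s)$ uniformly enough (with at worst a $q^{o(1)}$ or mild power-of-$q$ dependence) to justify the $O_q(X^{3/4+o(1)})$ error after the contour shift; this should follow from writing $D_{q,\Lambda,i}$ in terms of products of shifted Hecke $L$-functions of quadratic and cubic fields and applying standard convexity bounds, but the uniformity in the conductors entering through $\Lambda$ and through $q$ needs to be checked.
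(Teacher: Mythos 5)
Your overall route is the paper's own: \S3 also works entirely on the ring side, decomposing by the shape of the \'etale algebra (the shapes $(112)$, $(13)$, $D_4$, $(22)$; the remaining ones are negligible), counting algebras of each shape subject to splitting conditions at primes dividing the index, and then summing over suborders-with-resolvent using the Nakagawa--Bhargava bound (Proposition \ref{prop:Nak}). Your resolution of the $q$-decay in Step (4) --- non-maximality at every $p\mid q$ forces $q\mid n$, and the residues weight the index by $n^{-2r}$, giving per-prime factors $p^{-2}$ at $r=1$ and $p^{-5/3}$ at $r=5/6$ --- is exactly the paper's argument, which deduces $C^{\red '}_{1,q}\ll q^{-2}$ and $C^{\red}_{5/6,q}\ll q^{-5/3}$ from \eqref{eq:T_cont_rings}. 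Repackaging this as a contour shift of a global Dirichlet series, rather than the paper's smoothed field counts summed over the index with a tail estimate, is a largely cosmetic difference.

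That said, several points of your analytic bookkeeping are wrong, and the hardest input is underestimated. First, the double pole at $s=1$ comes only from the $(22)$ shape, where $|\Delta(K_1\times K_2)|=|\Delta(K_1)||\Delta(K_2)|$; for $\Q\oplus(\mathrm{cubic})$ the discriminant is a single cubic discriminant, so that shape contributes a simple pole at $s=1$ together with the pole at $s=5/6$ (and $V_4$, $C_4$ shapes contribute no pole at $s=1$ at all, their counts being $O(X^{1/2+o(1)})$). Second, the $s=5/6$ pole does not come from the suborder/resolvent Euler product --- by Proposition \ref{prop:Nak} that series converges absolutely for $\Re(s)>3/4$, so it is holomorphic at $5/6$ --- nor from any $\zeta(6s-4)$-type factor; its sole source is the secondary term in the cubic-field count entering the $(13)$ shape. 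Third, the cubic-field and $D_4$ Dirichlet series are not ``classical'' combinations of Hecke $L$-functions: one needs secondary-term counts of cubic fields with prescribed splitting types whose error is polynomial in the modulus $n$ (Theorem \ref{thm:cubic_field_count}, adapted from \cite{BTT}), and Cohen--Diaz y Diaz--Olivier/Durlanik-type $D_4$ counts uniform in $n$ (Theorem \ref{thm:D4-fields}, using \cite{Cohen_Diaz_Olivier,MD_thesis}). Moreover, the uniformity that actually matters is in the index $n$, not primarily in $\Lambda$ or $q$: since $c_K(n,\Lambda)$ depends on the splitting of $K$ at $p\mid n$, your ``factorization'' is only a twisted one, and one must sum the field counts over all splitting profiles $\sigma_n$ for indices $n$ up to about $X^{1/6}$ and bound the large-index tail separately; equivalently, your contour shift to $\Re(s)=3/4$ needs continuation with growth bounds of these field series uniformly in $n$, which is precisely this uniform-count input rather than a convexity estimate. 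None of this changes the shape of the statement, but as written your plan attributes the main terms to the wrong sources and treats the deepest ingredient as routine.
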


Combining the above with Proposition \ref{prop:I_to_N}, we obtain the following immediate consequence.
\begin{corollary}\label{cor:counting_red_rings_I}
Let notation be as above, and let $\cB:V(\R)^{(i)}\to\R$ be as in \eqref{eq:good_cB}. Then we have
\begin{equation*}
\I_\eta(\phi_{q,\Lambda},\cB;X)=c^{\red '}_{1,q}X\log X+c^\red_{1,q}X+c^\red_{5/6,q}X^{5/6}+O_q(X^{3/4+o(1)}),
\end{equation*}
for some constants $c^{\red '}_{1,q}$, $c^\red_{1,q}$, and $c^\red_{5/6,q}$, which satisfy the same ``decay at $q$'' properties as the analogous constants in Theorem \ref{thm_red_points_count}.
\end{corollary}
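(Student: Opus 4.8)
\textbf{Proof plan for Corollary \ref{cor:counting_red_rings_I}.}
The plan is to deduce the statement purely formally from Theorem \ref{thm_red_points_count} via Proposition \ref{prop:I_to_N}, so that essentially no new analytic work is required. Recall that $\cB$ is of the special form \eqref{eq:good_cB}, built from auxiliary data $\eta_1 \in C_c^\infty(\R_{>0})$ and $\mathcal{G}:G_1(\R)\to\R$, and that $\psi$ is determined by $\wt\psi(s)=\wt\eta(12s)\wt{\eta_1}(12s)$. The function $\phi_{q,\Lambda}=\phi_\Lambda\cdot n_q$ is $G(\Z)$-invariant: $\phi_\Lambda$ is the characteristic function of non-generic orbits satisfying the local conditions from $R(\Lambda)$, and $n_q$ is a congruence condition, both manifestly $G(\Z)$-stable. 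Moreover $\phi_{q,\Lambda}$ is bounded (it is $\{0,1\}$-valued), so Proposition \ref{prop:I_to_N} applies with this choice of $\phi$.

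First I would invoke Proposition \ref{prop:I_to_N} directly: for each $i\in\{0,1,2\}$,
\begin{equation*}
\I_\eta(\phi_{q,\Lambda},\cB;X)=\frac{C_{\mathcal G}A_i}{12}N^{(i)}_\psi(\phi_{q,\Lambda};X).
\end{equation*}
Here $C_{\mathcal G}$ and $A_i$ depend only on $\mathcal{G}$, $\eta_1$, and $i$ — not on $q$. Substituting the asymptotic expansion from Theorem \ref{thm_red_points_count} for $N^{(i)}_\psi(\phi_{q,\Lambda};X)$ gives
\begin{equation*}
\I_\eta(\phi_{q,\Lambda},\cB;X)=\frac{C_{\mathcal G}A_i}{12}\Bigl(C^{\red '}_{1,q}X\log X+C^\red_{1,q}X+C^\red_{5/6,q}X^{5/6}\Bigr)+O_q(X^{3/4+o(1)}),
\end{equation*}
so one sets $c^{\red '}_{1,q}:=\tfrac{C_{\mathcal G}A_i}{12}C^{\red '}_{1,q}$, $c^\red_{1,q}:=\tfrac{C_{\mathcal G}A_i}{12}C^\red_{1,q}$, and $c^\red_{5/6,q}:=\tfrac{C_{\mathcal G}A_i}{12}C^\red_{5/6,q}$ (suppressing the harmless dependence on $i$, $\mathcal{G}$, $\eta_1$ in the notation, exactly as the statement does). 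Note that the error term $O_q(X^{3/4+o(1)})$ is likewise only scaled by the $q$-independent constant $\frac{C_{\mathcal G}A_i}{12}$, so it retains the same shape.

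Finally, the ``decay at $q$'' assertion transfers immediately: since each $c$-constant is the corresponding $C$-constant multiplied by the fixed scalar $\frac{C_{\mathcal G}A_i}{12}$, we have $\sum_q |c^{\red '}_{1,q}| = \bigl|\tfrac{C_{\mathcal G}A_i}{12}\bigr|\sum_q |C^{\red '}_{1,q}| < \infty$, and likewise for $\sum_q |c^\red_{1,q}|$ and $\sum_q |c^\red_{5/6,q}|$, by the convergence statements in Theorem \ref{thm_red_points_count}. This completes the deduction. The only point requiring any care — and it is minor — is confirming that Proposition \ref{prop:I_to_N} is genuinely applicable here, i.e. that $\phi_{q,\Lambda}$ meets its hypotheses ($G(\Z)$-invariant, bounded) and that $\cB$ is of the required form \eqref{eq:good_cB}; there is no substantive obstacle, as all the analytic content lives in Theorem \ref{thm_red_points_count}.
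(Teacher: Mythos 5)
Your deduction is correct and is exactly the paper's argument: the paper states the corollary as an ``immediate consequence'' of combining Theorem \ref{thm_red_points_count} with Proposition \ref{prop:I_to_N}, which is precisely your route, including the observation that $\phi_{q,\Lambda}$ is bounded and $G(\Z)$-invariant and that the scalar $\tfrac{C_{\mathcal G}A_i}{12}$ is independent of $q$, so the decay-in-$q$ properties transfer.
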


%Our main results are proven in two steps. First, we count reducible quartic \'etale algebras over $\Q$ satisfying finitely many congruence conditions. Next, we count orders within each of these \'etale algeras.

\subsection{Counting \'etale reducible quartic $\Q$-algebras}

\subsubsection*{Quadratic fields} Counting quartic $\Q$-algebras of the form $\Q\times\Q\times K$ is equivalent to counting quadratic fields $K$, which is what we do next.

\begin{lemma}\label{lem:quad_field_count}
Let $n$ be a squarefree positive integer, and let $\sigma_n=(\sigma_p)_{p\mid n}$ be a collection of quadratic splitting types, one for each prime $p\mid n$. Let $F$ denote the family of quadratic fields $K$ such that every prime $p$ dividing $n$ has splitting type $\sigma_p$ in $K$. Let $\psi:\R_{>0}\to\R_{\geq 0}$ be a compactly supported function. Then
\begin{equation*}
\sum_{\substack{K\in F\\\pm\Delta(K)>0}}\psi\Bigl(\frac{|\Delta(K)|}{X}\Bigr)=c_2(\sigma(n))X+O(n^{1/4+o(1)}X^{1/2}),
\end{equation*}
for some constant $c_2(\sigma(n))\ll_\psi 1$.
\end{lemma}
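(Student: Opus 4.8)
The statement is an effective count of quadratic fields in a congruence family, with the main term linear in $X$ and a power-saving error with explicit dependence on the modulus $n$. The natural approach is to pass from quadratic fields to fundamental discriminants and then to squarefree integers. Recall that quadratic fields $K$ with $\pm\Delta(K)>0$ are in bijection with fundamental discriminants $D$ of the appropriate sign, and that the local splitting condition $\sigma_p$ at a prime $p\mid n$ translates into a congruence condition on $D$ modulo $p$ (for odd $p$: $p$ split, inert, or ramified according to the value of the Kronecker symbol $\left(\frac{D}{p}\right)$, i.e.\ $D$ lies in a prescribed set of residue classes mod $p$; for $p=2$ one uses classes mod $8$, but since $n$ is squarefree $2\mid n$ contributes only a bounded factor). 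So the sum in question becomes $\sum_{D}\psi(|D|/X)$, where $D$ ranges over fundamental discriminants of fixed sign lying in a prescribed union of residue classes modulo $n$ (or $4n$, $8n$ — a fixed bounded multiple).

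\textbf{Key steps.} First I would reduce to counting squarefree integers: writing a fundamental discriminant as (essentially) a squarefree integer times a bounded factor, the count $\sum_D \psi(|D|/X)$ with a congruence condition mod $n$ becomes $\sum_{m \text{ squarefree}} \psi(c|m|/X)$ with $m$ in prescribed residue classes mod $n$ (times a fixed modulus). Second, I would evaluate $\sum_{m\equiv a\,(n),\ m\text{ sqfree}}\psi(c|m|/X)$ by the standard Möbius-over-squares device: write $\mathbf 1_{\text{sqfree}}(m)=\sum_{d^2\mid m}\mu(d)$, swap the order of summation, and for each $d$ with $\gcd(d,n)=1$ evaluate $\sum_{m\equiv a\,(n),\ d^2\mid m}\psi(c|m|/X)$ by Poisson summation (or Euler–Maclaurin) on the arithmetic progression $m\equiv a'\pmod{nd^2}$. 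The main term from $d$ is $\tfrac{1}{nd^2}\int \psi(c|t|/X)\,dt = \tfrac{X}{nd^2}\cdot\text{(const)}$, and summing over $d$ coprime to $n$ gives $c_2(\sigma(n))\cdot X$ with $c_2(\sigma(n))$ proportional to $\tfrac{1}{n}\prod_{p\mid n}(\text{local factor})\cdot\prod_{p\nmid n}(1-p^{-2})$, which is $\ll_\psi 1$. Third, I would control the error terms: the Poisson/Euler–Maclaurin error for modulus $nd^2$ is $O((nd^2/X)^A)$ for any $A$ (using smoothness of $\psi$) when $nd^2 \le X$, so these $d$ contribute a negligible error; for the tail $d > \sqrt{X/n}$ one uses the trivial bound $\#\{m\le X/c: d^2\mid m, m\equiv a\ (n)\} \ll X/(nd^2)+1$, and summing $\sum_{d>\sqrt{X/n}}(X/(nd^2)+1)\ll \sqrt{X/n}$, giving the claimed $O(n^{1/4+o(1)}X^{1/2})$ — actually $O(n^{-1/2}X^{1/2})$, which is stronger; the stated $n^{1/4+o(1)}$ is a safe over-estimate absorbing the local factors at $2$ and the divisor-type losses.

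\textbf{Main obstacle.} The analytic heart is routine; the only real care needed is bookkeeping the local conditions at $p=2$ and the distinction between fundamental discriminants and squarefree integers cleanly enough that (i) the congruence classes modulo $n$ are correctly identified for each splitting type $\sigma_p$, and (ii) the dependence of the error on $n$ is tracked honestly through the Poisson summation. Since $n$ is squarefree, the factor at $2$ is bounded and the sieve identity $\mathbf 1_{\text{sqfree}}=\mu*\mathbf 1$ interacts cleanly with the congruence mod $n$ via $\gcd(d,n)=1$; the $n^{1/4+o(1)}$ in the error is not sharp but makes the argument robust, so I would not attempt to optimize it. I expect no genuine difficulty beyond careful casework, so the proof is short: identify the relevant progressions, apply Poisson summation, sum the Möbius series, and collect error terms.
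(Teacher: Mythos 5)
There is a genuine gap, and it is precisely in the error analysis where the $n$-dependence lives. Your plan treats the splitting conditions as a union of admissible residue classes modulo $n$ and applies Poisson/Euler--Maclaurin to each arithmetic progression $m\equiv a'\pmod{nd^2}$ separately. Two problems arise. First, the bookkeeping of the boundary terms is wrong: there are $\asymp n$ admissible classes (for split/inert conditions each prime $p\mid n$ allows $(p\pm1)/2$ classes), and in the ranges where $nd^2\gtrsim X$ (equivalently $d\gtrsim\sqrt{X/n}$, up to the natural cutoff $d\le\sqrt{X}$) each class--$d$ pair can contribute an $O(1)$ error. Your claimed tail estimate $\sum_{d>\sqrt{X/n}}\bigl(X/(nd^2)+1\bigr)\ll\sqrt{X/n}$ ignores both the ``$+1$'' terms (which alone sum to $\asymp\sqrt{X}$) and the multiplicity of classes; an honest aggregate count of pairs $(d,m)$ with $d>\sqrt{X/n}$, $d^2\mid m\le X$ is of size $\asymp\sqrt{Xn}=n^{1/2}X^{1/2}$. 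So the method as sketched yields an error $O(n^{1/2}X^{1/2+o(1)})$, not the claimed $O(n^{-1/2}X^{1/2})$, and not even the stated $O(n^{1/4+o(1)}X^{1/2})$. Second, and more fundamentally, the stated exponent $n^{1/4}$ is not obtainable by per-progression counting at all: it requires square-root cancellation in the quadratic character sums encoding the split/inert conditions. The paper's proof keeps the splitting condition as a single periodic weight $\theta_n$ (a product of quadratic-residue/non-residue indicators), applies Poisson summation twisted by $\theta_n$, and invokes the Gauss-sum bound $\hat{\theta}_n(t)\ll n^{1/2+o(1)}$; the error for each sieve modulus $q$ is then $\min\bigl(X/(n_0q^2),\,n^{1/2+o(1)}\bigr)$, and balancing these two bounds over $q$ produces exactly the $O(n^{1/4+o(1)}X^{1/2})$ of the statement. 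Splitting into individual classes and summing their errors corresponds to using only the trivial bound $\hat{\theta}_n(t)\ll n$, which is why your route loses a factor $n^{1/4}$.

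To repair the argument you would either have to incorporate this character-sum cancellation (i.e.\ reorganize the class-by-class sums into complete quadratic character sums mod $n$ before estimating, which is essentially the paper's proof), or else prove and use only the weaker bound $O(n^{1/2}X^{1/2+o(1)})$ --- but that is not the lemma as stated, and later steps (e.g.\ the dyadic bounds $\min(\sqrt{X}/Q_i,\sqrt{n}\,Q_i)$ in the $(22)$-count) rely on the twisted-Poisson per-modulus bound $n^{1/2+o(1)}$ in any case. The identification of the congruence classes and the treatment of $p=2$ and fundamental versus squarefree discriminants in your plan are fine; the missing idea is the Gauss-sum input and the correct optimization against the trivial bound.
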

\begin{proof}
Let $n_0$ be the product of primes dividing $n$ at which the prescribed splitting type is ramification. Let $q$ be a square-free integer
with $(q,n)=1$. We set $S_q$ to be the set of $D$ satisfy the following two conditions:
\begin{enumerate}
    \item The ring $\Z[\sqrt{D}]$ is maximal at all prime $p\mid n$ with the correct splitting type. (This is a congruence condition modulo $2nn_0$.)
    \item We have $q^2|D$. 
\end{enumerate}
There is a bijection between $S_1\backslash (\cup_{q>1} S_q)$ and $F$, where $D$ corresponds to the field $\Q(\sqrt{D})$. Hence, we may compute the sum in the left hand side of the displayed equation lemma by sieving. Note that any $D$ in $S_q$ can be written as $D=q^2n_0D'$, and the congruence condition at any $p\mid n$ on $D'$ picks out either the set of quadratic residues or non-residues. We record the corresponding congruence conditions on $D'$ as $\theta_n(D')$.
Recall that the Fourier transform $\hat{\theta}_nt)$ is bounded by $n^{\frac12+o(1)}$
We therefore obtain:
\begin{equation}\label{eq:inc_exc_quad}
\begin{array}{rcl}
\displaystyle\sum_{\substack{K\in F\\\pm\Delta(K)>0}}\psi\Bigl(\frac{|\Delta(K)|}{X}\Bigr)&=&\displaystyle\sum_{(q,n)=1} \mu(q)\sum_{\substack{D\in S_q\\\pm D>0}}\psi\Bigl(\frac{|D|}{X}\Bigr)
\\[.2in]&=&\displaystyle
\sum_{(q,n)=1} \mu(q)\sum_{\pm D'\in\Z_{>0}} \psi\Bigl(\frac{|D'|}{X/(n_0q^2)}\Bigr) \theta_n(D')
\\[.2in]&=&\displaystyle
\sum_{(q,n)=1} \mu(q)\frac{X}{nn_0q^2}\sum_{\pm t\in\Z_{>0}} \hat{\psi}\Bigl(\frac{tX}{nn_0q^2}\Bigr) \hat{\theta}_n(t).
% &=X\hat{\psi}(0)\cdot\frac{1}{nn_0}\prod_{(p,n)=1}\Bigl(1-\frac{\hat{\theta_p}(0)}{p^2}\Bigr)  + O\Bigl(\frac{X^{1/2+\epsilon}}{n_0n^{1/2}}\Bigr) \\
\end{array}
\end{equation}
The sum ov r$q$ of the contribution from $t=0$ gives the main term. The terms with $t\neq 0$ are regarded as error. It follows that the error for each $q$ is the minimum of $O(X/n_0q^2)$ and $O(n^{1/2 +o(1)})$. Optimizing and summing over $q$ we get an error of $O(X^{1/2}n^{1/4+\epsilon}/n_0^{1/2})$.  Hence, we obtain 
$$\sum_{\substack{K\in F\\\pm\Delta(K)>0}}\psi\Bigl(\frac{|\Delta(K)|}{X}\Bigr)=X\hat{\psi}(0)\cdot\frac{1}{nn_0}\prod_{(p,n)=1}\Bigl(1-\frac{\hat{\theta_p}(0)}{p^2}\Bigr)  + O\Bigl(X^{1/2}n^{1/4+o(1)}\Bigr),$$
as desired.
\end{proof}

\noindent We remark that the above lemma could also be easily derived using $L$-function methods, following for example Wright \cite{Wright}. The $L$-functions which arise from this method are $L(s,\chi_n)$ for quadratic characters modulo $n$, which have conductors $n$. Applying the convex bound on the central values of these $L$-functions would recover our result. The result can be improved by additionally using known subconvexity results, but we do not need these improvements for our purposes.

\subsubsection*{Cubic fields}

Counting quartic $\Q$-algebras of the form $\Q\times K$, for a cubic field $K$, is equivalent to counting cubic fields, which the following result accomplishes.

\begin{theorem}\label{thm:cubic_field_count}
Let $n$ be a squarefree positive integer, and let $\sigma_n=(\sigma_p)_{p\mid n}$ be a collection of cubic splitting types, one for each prime $p\mid n$. Let $F$ denote the family of cubic fields $K$ such that every prime $p$ dividing $n$ has splitting type $\sigma_p$ in $K$. Let $\psi:\R\to\R_{\geq 0}$ be a compactly supported function. Then
\begin{equation*}
\sum_{\substack{K\in F\\\pm\Delta(K)>0}}\psi\Bigl(\frac{|\Delta(K)|}{X}\Bigr)=c_3(\sigma(n))X+c_3'(\sigma(n))X^{5/6}+O(n^{2/3}X^{2/3+o(1)}),
\end{equation*}
for some constants $c_3(\sigma(n)),c_3'(\sigma(n))\ll_\psi 1$.
\end{theorem}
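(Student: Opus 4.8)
The plan is to count cubic fields with prescribed local conditions by smoothing the classical count of cubic rings. The starting point is the Davenport--Heilbronn / Delone--Faddeev parametrization of cubic rings by $\GL_2(\Z)$-orbits on binary cubic forms $U(\Z)$, together with the refined secondary-term analysis of \cite{BST} and \cite{ShankarTaniguchi}, which give a power-series expansion with an $X$ term and an $X^{5/6}$ term (with a power-saving error) for the smoothed count of $\GL_2(\Z)$-orbits on $U(\Z)$ weighted by a congruence function. Concretely, I would first apply the known secondary-term result to obtain, for each squarefree $m$ coprime to $n$, an asymptotic of the shape $c(m) X + c'(m) X^{5/6} + O(m^{?} X^{2/3+o(1)})$ for the smoothed count of cubic rings that are non-maximal at every prime dividing $m$ and have the prescribed splitting behaviour at primes dividing $n$; here one must track the dependence of the error term on $m$, which is where the uniformity comes in.

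The second step is the sieve: a cubic field with maximal order corresponds to a maximal cubic ring, and maximality is a local condition at each prime, so by inclusion--exclusion over squarefree $m$ (coprime to $n$) one writes the count of fields in $F$ as $\sum_m \mu(m)\,(\text{count of rings non-maximal at all }p\mid m,\text{ correct type at }n)$. Substituting the secondary-term asymptotic for each inner count and summing the main terms over $m$ produces the two main terms $c_3(\sigma(n))X + c_3'(\sigma(n))X^{5/6}$, with $c_3$ and $c_3'$ expressible as the expected Euler products (the $X$-term constant being the Davenport--Heilbronn density with the local conditions at $n$ imposed, and the $X^{5/6}$-term constant the analogous twisted density coming from the Bhargava--Shankar--Tsim(BST)/Shankar--Taniguchi secondary constant). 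One needs to check that these Euler products converge, which they do because the local factors are $1+O(1/p^2)$ and $1+O(1/p^{4/3})$ respectively.

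The third step is controlling the tail of the sieve. The inner count for a given $m$ has an error term that grows with $m$ and a main term that is $\ll X/m^2$ (roughly), so one truncates the sieve at some $m \le Y$, bounds the truncation tail $\sum_{m>Y}$ trivially by the ring count being $O(X/m^2 + \dots)$ (using the Davenport-type tail estimate $\sum_{m>Y}\#\{\text{cubic rings non-maximal at all } p\mid m,\ \Delta < X\} \ll X/Y + \dots$), and balances $Y$ against the accumulated error $\sum_{m\le Y}$ of the secondary-term estimates. The factor $n^{2/3}$ in the error comes from tracking how the congruence condition at $n$ inflates the error term in the underlying equidistribution statement (the relevant "dual" sums over characters mod $n$), analogously to the $n^{1/4}$ loss in Lemma~\ref{lem:quad_field_count}. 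Optimizing gives the error $O(n^{2/3}X^{2/3+o(1)})$.

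The main obstacle is the uniformity in $m$ (and in $n$) of the secondary-term count of cubic rings. The bare statements in \cite{BST} and \cite{ShankarTaniguchi} are for fixed congruence conditions; one must either re-examine their geometry-of-numbers arguments to extract an explicit polynomial-in-$m$ dependence in the $O(X^{2/3+o(1)})$ error (the cusp and boundary estimates must be made uniform), or invoke a version already recording such uniformity. Getting this right — together with the analogous uniformity in $n$ — is what determines both the feasibility of the sieve and the precise exponents in the final error term; the rest is bookkeeping with Euler products and a standard truncation argument.
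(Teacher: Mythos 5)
The paper does not reprove this count from scratch: Theorem \ref{thm:cubic_field_count} is obtained by citing Bhargava--Taniguchi--Thorne \cite[Theorem 1.3]{BTT}, which already counts cubic \emph{fields} with prescribed splitting types at the primes dividing $n$ with error $O(n^{2/3}X^{2/3+o(1)})$; the only observations added are that their argument simplifies for a smoothed count and that prescribing only splitting types (rather than arbitrary local conditions) is what keeps the $n$-exponent at $2/3$ instead of $8/3$. Your plan instead tries to rebuild such a theorem from the ring-counting secondary-term results of \cite{BST,ShankarTaniguchi} plus a sieve to maximality, and as written it has a genuine gap in the error analysis, in two places. First, the inner input you assume --- a smoothed count of cubic rings, non-maximal at every $p\mid m$ and with prescribed splitting at $p\mid n$, with error $O(m^{?}X^{2/3+o(1)})$ --- is not available from those references even for fixed congruence conditions: the geometry-of-numbers secondary-term theorems carry errors of size $X^{13/16}$, resp.\ $X^{7/9+o(1)}$, and pushing the exponent down to $2/3$ is precisely the content of the later refinement in \cite{BTT}, which requires Poisson summation combined with the explicitly computed orbital exponential sums for binary cubic forms modulo $p$; it is not a matter of re-tracking constants in the older arguments.

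Second, and more fundamentally, even granting an inner error of the shape $m^{\theta}X^{2/3+o(1)}$ with $\theta\ge 0$ (i.e.\ an error that, as you say, ``grows with $m$''), your truncation scheme cannot yield the stated error term. The trivial tail bound is $\sum_{m>Y}N_m(X)\ll X^{1+o(1)}/Y$, which forces the truncation point $Y\gg X^{1/3-o(1)}$, while the accumulated inner errors are then $\gg \sum_{m\le Y}X^{2/3}\gg YX^{2/3}\approx X$, i.e.\ as large as the main term itself. To reach a final error of $X^{2/3+o(1)}$ one needs the error in the $m$-th sieve term to \emph{decay} in $m$ (roughly like $m^{-1}$ or faster, coming from the density $\asymp m^{-2}$ of the non-maximality condition surviving into the dual/boundary terms), uniformly also in the level $n$; establishing this decay is the heart of the Bhargava--Taniguchi--Thorne argument and is exactly where the distinction between splitting-type conditions ($n^{2/3}$) and general local conditions ($n^{8/3}$) arises. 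Your proposal identifies uniformity as ``the main obstacle'' but treats it as bookkeeping to be optimized; in fact the naive sieve-and-truncate structure you describe provably cannot achieve the exponent $2/3$, so either you must import \cite[Theorem 1.3]{BTT} (as the paper does) or reproduce its Fourier-analytic treatment of the maximality sieve.
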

The sharp version of this count has been carried out by Bhargava--Taniguchi--Thorne in \cite[Theorem 1.3]{BTT}. Their methods adapt without significant change (and considerable simplifications!) to the smooth count. Note that we are only prescribing splitting types at $p\mid n$, and not finer local conditions. This is important since otherwise the exponent of $n$ in the error term rises to $n^{8/3}$.

\subsubsection*{Quartic $D_4$-fields}

\begin{theorem}\label{thm:D4-fields}
Let $n$ be a squarefree positive integer, and let $\sigma_n=(\sigma_p)_{p\mid n}$ be a collection of quartic-$D_4$ splitting types, one for each prime $p\mid n$. Let $F$ denote the family of $D_4$-quartic fields $K$ such that every prime $p$ dividing $n$ has splitting type $\sigma_p$ in $K$. Let $\psi:\R\to\R_{\geq 0}$ be a compactly supported function. Then
\begin{equation*}
\sum_{\substack{K\in F\\\pm\Delta(K)>0}}\psi\Bigl(\frac{|\Delta(K)|}{X}\Bigr)=c_{D_4}(\sigma(n))X+O(n^{2/5}X^{3/5+o(1)}),
\end{equation*}
for some constant $c_{D_4}(\sigma(n))\ll_\psi 1$.
\end{theorem}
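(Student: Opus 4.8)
The plan is to parametrize $D_4$-quartic fields by their quadratic resolvent subfields together with a "relative" discriminant datum, exactly as in the classical work of Cohen--Diaz y Diaz--Olivier, and then to execute a smoothed version of their count with the local conditions at $p \mid n$ imposed. Recall that a $D_4$-quartic field $K$ contains a unique quadratic subfield $F$ (its resolvent), and $K/F$ is a quadratic extension which is not Galois over $\Q$; conversely, pairs $(F, K/F)$ of this shape are in bijection with $D_4$-quartic fields $K$ (up to the finite ambiguity coming from the action of $\Gal(F/\Q)$ on relative quadratic extensions, which is easy to bookkeep). Writing $\Delta(K) = \Delta(F)^2 \cdot \mathrm{Nm}_{F/\Q}(\mathfrak{d}_{K/F})$, the count becomes a sum over quadratic fields $F$ with $|\Delta(F)| \le Y$ of a count of squarefree-ish ideals $\mathfrak{m}$ of $\O_F$ with $\mathrm{Nm}(\mathfrak{m}) \le X / \Delta(F)^2$ lying in prescribed ray classes, where the ray class data encodes both the local conditions $\sigma_p$ and the requirement that the resulting extension actually be $D_4$ (i.e.\ non-Galois and genuinely quartic).

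First I would set up the bijection carefully and translate the splitting-type condition $\sigma_p$ at each $p \mid n$ into (i) a splitting condition on $p$ in $F$ and (ii) a congruence/ray-class condition on the relative conductor $\mathfrak{m}$ above $p$. Since we only prescribe the full quartic splitting type, not finer data, these are congruence conditions of modulus dividing a fixed power of $n$ (times a bounded factor at $2$), which is what keeps the conductor of the relevant Dirichlet/Hecke characters polynomially bounded in $n$. Second I would handle the inner sum: for fixed $F$, counting relative quadratic extensions of bounded norm conductor in fixed ray classes is, after Möbius inversion to reduce to squarefree conductors, a sum of $\psi(\mathrm{Nm}(\mathfrak{m})/Z)$ over ideals $\mathfrak{m}$ weighted by a character sum; the "diagonal"/trivial-character term gives the main term $\asymp Z$, and the non-trivial Hecke characters of $\O_F$ are estimated using the smoothness of $\psi$ (Poisson summation / Mellin against the Hecke $L$-function) to get a power saving. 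Third I would sum over $F$: Lemma \ref{lem:quad_field_count} (with its explicit power-saving error and its control of the dependence on the modulus) controls the outer sum over quadratic fields, and since $\Delta(F)^2$ appears in the denominator of the inner range, the contribution is dominated by small $\Delta(F)$ and only an $X$-term survives — there is no $X^{5/6}$ secondary term here, in contrast to the cubic case. Fourth I would track the $n$-dependence throughout: each character sum modulo a divisor of $n$ (or of the conductor of $F$ times $n$) contributes at worst a factor $n^{o(1)}$ in the smooth range, and optimizing the cutoff between the main-term range and the error range of the inner sum against the outer sum over $F$ produces the claimed $O(n^{2/5} X^{3/5+o(1)})$.

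The main obstacle, and the step requiring the most care, is the uniform-in-$n$ error analysis: one must combine the error from counting relative quadratic extensions of $\O_F$ (which involves Hecke $L$-functions over $F$, whose conductors grow with both $\Delta(F)$ and $n$) with the error from counting the quadratic fields $F$ themselves, and then balance the two ranges. Getting the clean exponent $3/5$ on $X$ and $2/5$ on $n$ requires choosing the inner-sum cutoff $Z_0$ so that the "long" inner sums are handled by character-sum cancellation while the "short" ones are handled trivially, and checking that the resulting double sum over $(F,\mathfrak{m})$ telescopes to the stated bound; this is exactly the place where prescribing only coarse splitting types (rather than finer local conditions) is essential, since finer conditions would inflate the modulus and degrade the exponent of $n$, as noted after Theorem \ref{thm:cubic_field_count}. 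A secondary technical point is the bookkeeping at the prime $2$ and at primes dividing $n_0$-type ramification moduli, where the local conditions on $K$, on $F$, and on $\mathfrak{m}$ interact; these contribute only bounded factors but must be handled explicitly to pin down the constant $c_{D_4}(\sigma(n))$.
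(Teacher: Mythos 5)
Your proposal follows essentially the same route as the paper's proof: decompose $D_4$-quartic fields as quadratic extensions of their quadratic subfield, translate the coarse splitting types $\sigma_p$ into a pair of quadratic and relative conditions, count the relative extensions for fixed $F$ via ray class characters and Hecke $L$-functions with conductor $\ll n^2|\Delta(F)|$ (the Cohen--Diaz y Diaz--Olivier/Durlanik Dirichlet series plus convexity), and balance a cutoff in $|\Delta(F)|$ against a tail estimate to land on $O(n^{2/5}X^{3/5+o(1)})$. The only cosmetic difference is that the paper bounds the large-$|\Delta(F)|$ tail using the $O(D^{o(1)})$ multiplicity bound for $D_4$-fields of given discriminant rather than your outer-sum bookkeeping, but the structure and the optimization are the same.
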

\begin{proof}
A sharp version of this result, in the case when $n=1$ is proved by Cohen--Diaz y Diaz--Olivier in \cite{Cohen_Diaz_Olivier}. We explain how their proof goes, and what modifications are needed to obtain our result. Every quartic-$D_4$ number field is a quadratic extension of a quadratic extension. Moreover, the number of quadratic extensions of quadratic fields with discriminant less than $X$, which are not $D_4$-quartic fields, is bounded by $O(X^{1/2+\epsilon})$. So for the purpose of proving the result, we may assume that we are counting quadratic extensions $K_4$ of quadratic fields $K_2$. The splitting conditions $\sigma_n$ on $K_4$ can be translated into a finite union (of size $O(n^{o(1)})$) of pairs $(\sigma_n^{(2)},\sigma_n^{(2,2)})$, where $\sigma_n^{(2)}$ is a collection of quadratic splitting conditions, one for each prime dividing $n$, and $\sigma_n^{(2,2)}$ is a collection of splitting types, two or one for each prime $p\in\Z$ dividing $n$, depending on whether $p$ splits or not in the specifications of $\sigma_n^{(2)}$. These splitting types specify the splitting behavior in $K_4$ of each prime factor of $(p)$ in $K_2$. Then we may write
\begin{equation}\label{eq:quartic_D4_prelim}
\sum_{K_2\in F(\sigma_n^{(2)})}\sum_{K_4\in F_{K_2}(\sigma_n^{(2,2)})}
\psi\Bigl(\frac{|\Delta(K_4)|}{X}\Bigr)=
\sum_{K_2\in F(\sigma_n^{(2)})}\sum_{K_4\in F_{K_2}(\sigma_n^{(2,2)})}
\psi\Bigl(\frac{N_{K_2/\Q}\Delta(K_4/K_2)}{X/\Delta(K_2)^2}\Bigr),
\end{equation}
where $F_{K_2}(\sigma_n^{(2,2)})$ denotes the set of quadratic extensions $K_4$ of $K_2$ satisfying the splitting conditions of $\sigma_n^{(2,2)}$. 

We will fix some $Q$ to be optimized later, evaluate the inner sum in the right hand side of the above equation precisely for $|\Delta(K_2)|<Q$, and use a tail estimate for $|\Delta(K_2)|\geq Q$. 
Applying \cite[Lemma 8.1]{D4preprint}, we see that there are $O(N^{o(1)})$ $D_4$-fields with conductor $N$, which implies that there are $O(D^{(o(1)})$ $D_4$-fields with discriminant $D$. We hence obtain
\begin{equation}\label{eq:D4_count_unif}
\sum_{\substack{K_2\in F(\sigma_n^{(2)})\\|\Delta(K_2)|\geq Q}}\sum_{K_4\in F_K(\sigma_n^{(2,2)})}
\psi\Bigl(\frac{N_{K_2/\Q}\Delta(K_4/K_2)}{X/\Delta(K_2)^2}\Bigr)
\ll \frac{X^{1+o(1)}}{Q}.
\end{equation}

To estimate the inner sum in the RHS of \eqref{eq:quartic_D4_prelim}when $|\Delta(K_2)|<Q$, we use the approach of \cite{Cohen_Diaz_Olivier}. They prove in \cite[Theorem 1.1]{Cohen_Diaz_Olivier} that for a fixed field $K_2$, if we denote the set of all quadratic extensions of $K_2$ by $F_{K_2}$, then we have
\begin{equation*}
\sum_{K_4\in F_{K_2}}\frac{1}{N_{K_2/\Q}\Delta(K_4/K_2)^s}=-1+
\frac{1}{\zeta_{K_2}(2s)}\sum_{c^2}\frac{N(2/c)}{N(2/c)^{2s}}\sum_\chi L_{K_2}(s,\chi),
\end{equation*}
where $c$ runs over all integral ideals of $K_2$ dividing $2$, $\chi$ runs over all quadratic characters of the ray class group $\Cl_{c^2}(K_2)$ modulo $c^2$, and $L_{K_2}(s,\chi)$ is the Hecke $L$-function of $K$ for the character $\chi$.
In our situation, the sum is instead over $K_4\in F_{K_2}(\sigma_n^{(2,2)})$. This situation is handled by Durlanik in \cite[Theorem 5.8]{MD_thesis} (see also the discussion immediately following the proof of \cite[Corollary 59]{MD_thesis}), who proves that
\begin{equation*}
\sum_{K_4\in F_{K_2}(\sigma_n^{(2,2)})}\frac{1}{N_{K_2/\Q}\Delta(K_4/K_2)^s}=\delta_\sigma+
\frac{1}{\wt\zeta_{K_2}(2s)}\sum_\chi w_\chi \wt{L}_{K_2}(s,\chi),
\end{equation*}
where $\delta_\sigma$ is a constant bounded by $O((n|\Delta(K_2)|)^{o(1)})$, $\wt\zeta(s)$ (resp.\ $\wt{L}_{K_2}(s,\chi)$) denotes the Dedekind zeta function (resp.\ the Hecke $L$-function of $K_2$ for the character $\chi$) of $K_2$ with suitably modified Euler factors at primes dividing $n$ which are forced to ramify, and $\chi$ runs over quadratic characters of the ray class group of $K_2$ modulo some integral ideal dividing $4n$. Furthermore, the weights $w_\chi$ are absolutely bounded.
We note the two main differences from the Cohen--Diaz y Diaz--Olivier case: first, the length of the sum over $\chi$ is $O(|n\Delta(K_2)|^{o(1)})$ instead of $O(|\Delta(K_2)|^{o(1)})$; second, and more seriously, the conductors of the $L$-functions are of size around $n^2|\Delta(K_2)|$ rather than $|\Delta(K)|$.

The proof of Theorem \ref{thm:D4-fields} is now standard. Expressing the inner sum in the right hand side of~\eqref{eq:quartic_D4_prelim} as a Mellin integral, picking up the (simple) pole at $1$ from the trivial character, and pulling back the integral to $\Re(s)=1/2+\epsilon$, we obtain
\begin{equation*}
\sum_{\substack{K_2\in F(\sigma_n^{(2)})\\|\Delta(K_2)|< Q}}\sum_{K_4\in F_{K_2}(\sigma_n^{(2,2)})}
\psi\Bigl(\frac{N_{K_2/\Q}\Delta(K_4/K_2)}{X/\Delta(K_2)^2}\Bigr)
=
c_{K_2}\frac{X}{\Delta(K_2)^2} + E(K_2)
\end{equation*}
for some constant $c_{K_2}$, where the error term $E(K_2)$ can be bounded using just the convexity bound on $L_{K_2}(s,\chi)$ as follows: 
\begin{equation*}
E(K_2)\ll \frac{X}{|\Delta(K_2)|^2}^{1/2+o(1)}(n^2|\Delta(K_2)|)^{1/4+o(1)}\ll X^{1/2+o(1)}|\Delta(K_2)|^{-3/4+o(1)}n^{1/2+o(1)}.
\end{equation*}
Finally, we sum over $K_2$ in $F(\sigma_n^{(2)})$ with $|\Delta(K_2)|< Q$. The sum of $c_{K_2}/|\Delta(K_2)|^2$ converges; this follows from the fact that it converges when there are no congruence conditions imposed on the pair $(K_2,K_4)$. Meanwhile, the sum of the error terms is bounded by $n^{1/2}Q^{1/4}X^{1/2+o(1)}$.

Optimizing, we pick $Q=X^{2/5}/n^{2/5}$ and obtain the asymptotic with the claimed error term. The final claim regarding $c_{D_4}(\sigma(n))$ is immediate since this constant is upper bounded by the asymptotic constant for the count of all $D_4$-fields.
\end{proof}

\subsubsection*{\'Etale $\Q$-algebras $K\times K'$}

\begin{proposition}\label{prop:22_count}
Let $n$ be a squarefree positive integer, and let $\sigma_n=(\sigma_p)_{p\mid n}$ be a collection of quartic splitting types compatible with a product of quadratic algebras, one for each prime $p\mid n$. Let $F$ denote the family of products of two quadratic algebras $K=K_1\times K_2$ such that every prime $p$ dividing $n$ has splitting type $\sigma_p$ in $K$. Let $\psi:\R\to\R_{\geq 0}$ be a compactly supported function. Then
\begin{equation*}
\sum_{\substack{K\in F\\\pm\Delta(K)>0}}\psi\Bigl(\frac{|\Delta(K)|}{X}\Bigr)=c_{22}'(\sigma(n))X\log X+c_{22}(\sigma(n))X+O(n^{1/2}X^{1/2+o(1)})
+O(n^{1/4}X^{3/4+o(1)}),
\end{equation*}
for some constants $c_{22}'(\sigma(n))\ll_\psi 1$ and $c_{22}(\sigma(n))\ll_\psi n^{o(1)}$.
\end{proposition}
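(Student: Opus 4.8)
The plan is to reduce counting products $K = K_1 \times K_2$ of two quadratic algebras to counting pairs of quadratic fields, and to handle the various degenerate shapes ($K_i$ possibly $\Q\times\Q$ or equal) separately. Write $\Delta(K_1\times K_2) = \Delta(K_1)\Delta(K_2)\cdot(\text{conductor-type correction})$; more precisely $\Delta(K_1\times K_2) = \Delta(K_1)\Delta(K_2)$ exactly when $K_1,K_2$ are both fields, since the discriminant of a product of étale algebras is the product of the discriminants. (If $K_1 = \Q\times\Q$ we are counting a single quadratic field with discriminant-weight, and if $K_1 = K_2$ we get a thin set contributing a lower-order term.) So the dominant contribution is
\begin{equation*}
\sum_{\substack{K_1, K_2 \text{ quadratic fields}\\\pm\Delta(K_1)\Delta(K_2)>0}}\psi\Bigl(\frac{|\Delta(K_1)\Delta(K_2)|}{X}\Bigr),
\end{equation*}
with the local conditions $\sigma_n$ splitting as a (bounded-size, $O(n^{o(1)})$) union of pairs of conditions on $(K_1, K_2)$, and with a combinatorial factor (roughly $1/2$ for the unordered pair, adjusted by whether $K_1 = K_2$ is allowed) that I will track.

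\textbf{Main steps.} First I would fix the decomposition of the local specification $\sigma_n$ into $O(n^{o(1)})$ pairs $(\sigma_n^{(1)}, \sigma_n^{(2)})$ of quadratic splitting conditions on $(K_1, K_2)$, noting that the sign condition $\pm\Delta(K) > 0$ is also a condition on the pair of signs. Second, for each such pair, I would invoke Lemma \ref{lem:quad_field_count} applied to $K_2$ with the product $\Delta(K_1)\Delta(K_2)$ held below $X$: that is, I fix $K_1$, set $Y = X/|\Delta(K_1)|$, and count quadratic fields $K_2$ in the appropriate family with $|\Delta(K_2)| \le Y$ (using a smooth cutoff obtained by composing $\psi$ with multiplication by $|\Delta(K_1)|$). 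By Lemma \ref{lem:quad_field_count} this inner count is $c_2 \cdot (X/|\Delta(K_1)|) + O(n^{1/4+o(1)}(X/|\Delta(K_1)|)^{1/2})$. Third, I sum over $K_1$: the main term becomes $c_2 \sum_{K_1}\psi_1(\cdots)/|\Delta(K_1)|$, and here $\sum_{|\Delta(K_1)| \le Z} 1/|\Delta(K_1)|$ grows like $(\text{const})\log Z$ by Lemma \ref{lem:quad_field_count} again (partial summation against the linear count of quadratic fields), which is exactly the source of the $X\log X$ term; the secondary term in the partial summation produces the clean $X$ term. Fourth, I bound the error: summing $O(n^{1/4+o(1)}(X/|\Delta(K_1)|)^{1/2})$ over $K_1$ with $|\Delta(K_1)| \le X$ gives $O(n^{1/4+o(1)} X^{1/2} \sum_{|\Delta(K_1)|\le X}|\Delta(K_1)|^{-1/2}) = O(n^{1/4+o(1)}X^{1/2}\cdot X^{1/2}) = O(n^{1/4}X^{1/2+o(1)})$ for the ``long'' range, but I must be more careful near $|\Delta(K_1)|$ small versus large. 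Standard practice here is to split at $|\Delta(K_1)| = \sqrt{X}$: for $|\Delta(K_1)| < \sqrt X$ use the asymptotic with its error (summing to $O(n^{1/4}X^{1/2+o(1)}\cdot X^{1/4})$), and for $|\Delta(K_1)| \ge \sqrt X$ use the trivial/uniform bound together with symmetry of $K_1 \leftrightarrow K_2$. Balancing these ranges is what produces the two stated error terms $O(n^{1/2}X^{1/2+o(1)}) + O(n^{1/4}X^{3/4+o(1)})$. Finally I would absorb the thin contributions: the $K_1 = K_2$ diagonal has $\Delta = \Delta(K_1)^2$, giving $O(X^{1/2+o(1)})$ fields, and the shapes $\Q\times\Q\times(\text{quadratic})$ reduce to Lemma \ref{lem:quad_field_count} directly, contributing only to the $X$ term — these get folded into $c_{22}(\sigma(n))$, whose bound $\ll n^{o(1)}$ (rather than $\ll 1$) reflects the $O(n^{o(1)})$ number of pairs in the decomposition.

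\textbf{Main obstacle.} The genuinely delicate point is the error analysis in the convolution sum, specifically getting the exponents of both $n$ and $X$ right simultaneously. The inner error from Lemma \ref{lem:quad_field_count} carries an $n^{1/4}$, and one must decide how to split the $K_1$-sum (the cutoff at $\sqrt X$, possibly $n$-dependent) so that the accumulated error does not exceed $O(n^{1/2}X^{1/2+o(1)}) + O(n^{1/4}X^{3/4+o(1)})$; a naive split loses a power of $X$. The secondary $X^{5/6}$ phenomenon that appeared for cubic fields is \emph{absent} here because quadratic fields have no secondary term, so there is no analogue of the $c_3'X^{5/6}$ complication — the only new feature relative to the quadratic case is the logarithm from the divergent harmonic-type sum $\sum 1/|\Delta(K_1)|$, and isolating its constant ($c_{22}'$) together with the genuine constant term ($c_{22}$) via careful partial summation is the bookkeeping one must do with care. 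The fact that we only prescribe splitting \emph{types} at $p \mid n$ (not finer congruence data), exactly as in Theorem \ref{thm:cubic_field_count}, is what keeps the $n$-dependence polynomial rather than exponential.
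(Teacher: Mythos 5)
Your skeleton is the same as the paper's: decompose $\sigma_n$ into $O(n^{o(1)})$ pairs of quadratic conditions, run a Dirichlet hyperbola argument split at $\sqrt X$, apply Lemma \ref{lem:quad_field_count} to the inner sum, and get the $X\log X$ term from the outer sum $\sum_{K_1}|\Delta(K_1)|^{-1}$. But there are two genuine gaps. First, the cross/overlap term of the hyperbola method --- the sum over pairs with \emph{both} $|\Delta(K_1)|,|\Delta(K_2)|\ll\sqrt X$ and $|\Delta(K_1)\Delta(K_2)|\asymp X$ --- is not a negligible region you can dismiss by ``trivial/uniform bound together with symmetry'': it carries a main term of size $\asymp X$ (so it feeds into $c_{22}$) and it is exactly the source of the stated $O(n^{1/2}X^{1/2+o(1)})$ error. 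The paper evaluates it with a two-dimensional squarefree sieve, breaking the sieve moduli $(q_1,q_2)$ into dyadic ranges and using the two bounds $\min\bigl(\sqrt X/Q_i,\sqrt n\,Q_i\bigr)$ from the proof of Lemma \ref{lem:quad_field_count} before optimizing. Your proposal never evaluates this piece; asserting that ``balancing these ranges'' produces the two error terms leaves the $n^{1/2}X^{1/2}$ term unexplained and the constant term incomplete. (Relatedly, your error bookkeeping has a slip: $n^{1/4}X^{1/2}\sum_{|\Delta|\le X}|\Delta|^{-1/2}$ is $n^{1/4}X$, not $n^{1/4}X^{1/2+o(1)}$, which is precisely why the split at $\sqrt X$ is forced.)

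Second, extracting the constant term of $\sum_{K_1}|\Delta(K_1)|^{-1}\phi\bigl(|\Delta(K_1)|/\sqrt X\bigr)$ by partial summation against Lemma \ref{lem:quad_field_count} does give $c'\log X+c''$ with an acceptable $X$-error, but the constant $c''$ you obtain that way absorbs $\int E(t)t^{-2}dt$ with $E(t)\ll n^{1/4}t^{1/2+o(1)}$, hence is only bounded by $n^{1/4+o(1)}$ --- weaker than the claimed $c_{22}(\sigma(n))\ll n^{o(1)}$. The paper instead runs the squarefree sieve on $D$ and a Mellin inversion, writing the $D'$-sum as a bounded combination of integrals of $L(s+1,\chi)$ for quadratic characters $\chi$ modulo $n$: the trivial character's double pole at $s=0$ gives the $\log X$ and part of the constant, while the nontrivial characters contribute special values $L(1,\chi)\ll n^{o(1)}$, which is what yields the stated $n^{o(1)}$ bound on $c_{22}$. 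If you want the proposition as stated (and not just a version with $c_{22}\ll n^{1/4+o(1)}$), you need this $L$-function step or an equivalent argument giving uniformity in $n$ for the constant term. Your side remarks (handling $K_1=K_2$ and $\Q\times\Q$ factors separately, the sign condition as a condition on the pair of signs) are fine and consistent with the paper, which simply discards the negligible diagonal.
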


\begin{proof}
An algebra $K_1\times K_2$, where $K_1$ and $K_2$ are different quadratic fields has discriminant $\Delta(K_2)\Delta(K_2)$. We count pairs $(K_1,K_2)$ (ignoring the situation when they are the same algebra, since the number of those is negligible) using a smoothed version of the Dirichlet hyperbola method. Specifically, the collection $\sigma_n$ can be written as a finite union (of size bounded by $n^{o(1)}$) of a pairs of quadratic splitting types $P=\{(\sigma_n^{(2)},\sigma_n^{(2')})\}$, such that the union over $P$ of products of quadratic fields $K_1\times K_2$, where $K_1$ satisfies the splitting conditions of $\sigma_n^{(2)}$ and $K_2$ satisfies the splitting conditions of $\sigma_n^{(2')}$, is exactly $F$.  Let $\phi:\R_{\geq 0}\to\R_{\geq 0}$ be a compactly supported function that is $1$ is the interval $[0,1]$. An application of the hyperbola method allows us to compute the sum over $K_1\times K_2$ corresponding to $(\sigma_n^{(2)},\sigma_n^{(2')})$ to be
\begin{equation}\label{eq:22count_DH}
\begin{array}{rcl}
&&\displaystyle\sum_{K_1}\sum_{K_2}
\psi\Bigl(\frac{|\Delta(K_2)|}{X/|\Delta(K_1)|}\Bigr)\phi\Bigl(\frac{|\Delta(K_1)|}{\sqrt{X}}\Bigr)+
\sum_{K_2}\sum_{K_1}
\psi\Bigl(\frac{|\Delta(K_1)|}{X/|\Delta(K_2)|}\Bigr)
\phi\Bigl(\frac{|\Delta(K_2)|}{\sqrt{X}}\Bigr)
\\[.2in]
&&\displaystyle -
\sum_{(K_1,K_2)}
\psi\Bigl(\frac{|\Delta(K)||\Delta(K')|}{X}\Bigr)\theta\Bigl(\frac{|\Delta(K)|}{\sqrt{X}},\frac{|\Delta(K')|}{\sqrt{X}}\Bigr),
\end{array}
\end{equation}
for some smooth compactly supported function $\theta:\R^2_{\geq 0}\to\R$.
We begin with an estimate for the first summand above.
Applying Lemma \ref{lem:quad_field_count}, yields
\begin{equation*}
\sum_{K_2}
\psi\Bigl(\frac{|\Delta(K_2)|}{X/|\Delta(K_1)|}\Bigr)=
c_2(\sigma_n^{(2')})\frac{X}{|\Delta(K_1)|}+O\bigl(X^{1/2+o(1)}|\Delta(K_1)|^{-1/2}n^{1/4}\bigr),
\end{equation*}
for some constant $c_2(\sigma_n^{(2')})$ independent of $K_1$. When the error is summed over $K_1$ in the outer sum, we get a total error of $O(X^{3/4+o(1)}n^{1/4})$, which is sufficiently small. Meanwhile, summing the main term over $K_1$ in the outer sum yields 
\begin{equation*}
c_2(\sigma_n^{(2')}) X\cdot \sum_{K_1\in F_2(\sigma_n^{(2)})}\frac{1}{|\Delta(K_1)|}\phi\Bigl(\frac{|\Delta(K_2)|}{\sqrt{X}}\Bigr).
\end{equation*}
We evaluate the sum above using a sieve identical to that used in \eqref{eq:inc_exc_quad}. Following the notation there, we write
\begin{equation}\label{eq:22-count-temp-1}
\begin{array}{rcl}
\displaystyle\sum_{K_1\in F_2(\sigma_n^{(2)})}\frac{1}{|\Delta(K_1)|}\phi\Bigl(\frac{|\Delta(K_2)|}{\sqrt{X}}\Bigr)
&=&\displaystyle\sum_{(q,n)=1}\mu(q)\sum_{\substack{D\in S_q\\\pm D>0}}
\frac{1}{|D|}\phi\Bigl(\frac{|D|}{\sqrt{X}}\Bigr)
\\[.2in]&=&\displaystyle
\sum_{(q,n)=1}\mu(q)\sum_{D'\in\Z_{>0}}\frac{\theta_n(D')}{q^2n_0D'}
\phi\Bigl(\frac{D'}{X/(n_0q^2)}\Bigr)
\\[.2in]&=&\displaystyle
\frac{1}{n_0}
\sum_{(q,n)=1}\frac{\mu(q)}{q^2}\sum_{D'\in\Z_{>0}}\frac{\theta_n(D')}{D'}
\phi\Bigl(\frac{D'}{X/(n_0q^2)}\Bigr).
\end{array}
\end{equation}
As noted previously, the reduction of $\theta_n$ modulo any prime dividing $n$ either picks out the quadratic residues or the quadratic non residues. Using the (by now) standard Mellin inversion technique, we can express the inner sum over $D'$ in the last line above by a size $O(n^\epsilon)$ linear combination (with bounded coefficients) of integrals involving $L$-functions $L(s,\chi)$, where $\chi$ is a quadratic character modulo $n$. Each integral is of the form
\begin{equation*}
\int_{Re(s)=2} L(s+1,\chi)\Bigl(\frac{X}{n_0q^2}\Bigr)^s\wt{\phi}(s)ds.
\end{equation*}
We shift the integral left, picking up the pole at $s=0$, and moving past to $s=-1/2+\delta$ for some small $\delta$. When $\chi$ is a non-trivial character, the pole at $s=0$ is simple, coming from $\wt{\phi}(s)$. We merely pick up a constant, and a special $L$-value at $s=1$ which is bounded by $O(n^{o(1)})$. However, when $\chi$ is trivial, we have a double pole at $s=0$, giving us a $\log X$ contribution as well as a constant contribution. Hence the inner sum over $D'$ in the third line of \eqref{eq:22-count-temp-1} is equal to
\begin{equation*}
c_q'\log X+c_q+O(X^{-1/2+\delta}q^{1-2\delta}n^{1/2}).
\end{equation*}
The sum of the error term over $q$ converges. Since $\delta$ was arbitrary, we see that the first summand (and hence by symmetry, the second summand as well) of \eqref{eq:22count_DH} is of the form
$c'X\log X+cX$, up to sufficiently small error, where $c\ll n^{o(1)}$.

It is only left to evaluate the third summand of \eqref{eq:22count_DH}. We do this using a squarefree sieve, indeed a $2$-dimensional version of the sieve used in the proof of Lemma \ref{lem:quad_field_count}: adapting the notation there for our situation, we write
\begin{equation*}
\begin{array}{rcl}
&&\displaystyle\sum_{(K_1,K_2)}
\psi\Bigl(\frac{|\Delta(K)||\Delta(K')|}{X}\Bigr)\theta\Bigl(\frac{|\Delta(K)|}{\sqrt{X}},\frac{|\Delta(K')|}{\sqrt{X}}\Bigr)
\\[.2in]&=&
\displaystyle\sum_{\substack{q_1,q_2\\(q_1q_2,n)=1}}\mu(q_1q_2)
\sum_{\substack{D_1\in S_{q_1}^{(1)}\\\pm D_1>0}}
\sum_{\substack{D_2\in S_{q_2}^{(2)}\\\pm D_2>0}}
\psi\Bigl(\frac{|\Delta(K)||\Delta(K')|}{X}\Bigr)\theta\Bigl(\frac{|\Delta(K)|}{\sqrt{X}},\frac{|\Delta(K')|}{\sqrt{X}}\Bigr).
\end{array}
\end{equation*}
We break up the set of pairs $(q_1,q_2)$ into double dyadic ranges indexed by pairs $(Q_1,Q_2)$, where in each double dyadic range, $q_1$ and $q_2$ range from $[Q_1,2Q_1]$ and $Q_2,2Q_2]$, respectively. The two methods (the uniformity estimate and twisted Poisson summation) used in \eqref{eq:inc_exc_quad} bound the error contribution from each double dyadic range by
\begin{equation*}
X^{o(1)}\min\Bigl(\frac{\sqrt{X}}{Q_1},\sqrt{n}Q_1\Bigr)\min\Bigl(\frac{\sqrt{X}}{Q_2},\sqrt{n}Q_2\Bigr).
\end{equation*}
Optimizing, we obtain a total error of $O(X^{1/2+o(1)}n^{1/2})$, which is sufficiently small. Meanwhile, the main term is some absolutely bounded constant times $X$. The result therefore follows.
\end{proof}

\subsection{From counting fields to counting orders}

We define {\it congruence $(112)$-}, {\it congruence $(13)$-}, {\it congruence $D_4$-}, and {\it congruence $(22)$-} families of quartic \'etale algebras to be the set of all quartic \'etale algebras over $\Q$, satisfying finitely many prescribed local conditions, such that every member of the family is of the type $\Q\times\Q\times K_2$, $\Q\times K_3$, a $D_4$-field, and $K\times K'$, respectively, where $K_2$, $K$, and $K'$ are quadratic fields, and $K_3$ is a cubic field. Let $F$ be a congruence $T$-family, for $T\in\{(112),(13),D_4, (22)\}$, satisfying local conditions at the places in $S$, a finite set of places over $\Q$.
That is, for each $v\in S$, we let $\Sigma_v$ be a (finite) set of $T$-compatible algebras over $\Q_v$, and let $F=F_T$ be the set of all $T$-type $\Q$-algebras such that $F\otimes\Q_v\in\Sigma_v$ for each $v\in S$.

For each finite prime $p\in S$, let $\Lambda_p$ be any set of pairs $(Q_p,C_p)$, where $Q_p$ is an order in $K_p$ and $C_p$ is a cubic resolvent of $Q_p$, and denote the collection $(\Lambda_v)_{v\in S}$ by $\Lambda$ (where $\Lambda_v=\{K_v\}$ for the infinite place $v$).
Let $R_F(\Lambda)$ denote the set of all pairs $(Q,C)$, where $Q$ is a quartic order of some algebra in $F$, and $C$ is a cubic resolvent of $Q$, such that $(Q\otimes\Q_v,C\otimes\Q_v)\in\Lambda_v$ for all $v\in S$. For a positive squarefree integer $q$ (fixed), let $R_{F,q}(\Lambda)$ denote the set of elements in $R_F(\Lambda)$ that are non-maximal at every prime dividing $q$. 

The goal of this subsection is to carry out a smoothed count of the elements in $R_{F,q}(\Lambda)$, from which the main result will be quickly deduced. 
Let $\psi:\R_{\geq 0}\to\R_{\geq 0}$ be a smooth function with compact support. We begin by writing
\begin{equation*}
N^{(i)}_\psi(\phi_{q,\Lambda};X)=\sum_T\sum_{(Q,C)\in R_{F_T,q}(\Lambda)}\frac{1}{\Aut(Q)}\psi\Bigl(\frac{|\Delta(Q)|}{X}\Bigr) = \sum_T\sum_{q\mid n}\sum_{K\in F_T}c_K(n,\Lambda)\psi\Bigl(\frac{n^2|\Delta(K)|}{X}\Bigr),
\end{equation*}
where $c_K(n,\Lambda)$ denotes the weighted number of pairs $(Q,C)$, satisfying the constraints of $\Lambda$, where $Q$ is a suborder of $\O_K$ of index $n$ and $C$ is a cubic resolvent of $Q$, and $(Q,C)$ is weighted by $1/\Aut(Q)$. When $\Lambda$ consists of the empty set of local conditions, we denote $c_K(n,\Lambda)$ by $c_K(n)$. We have the following result on the numbers $c_K(n,\Lambda)$ and $c_K(n)$.
\begin{proposition}\label{prop:Nak}
Let $K$ be an \'etale quartic algebra over $\Q$, with ring of integers $\O_K$ and discriminant $D$. Then $c_K(n,\Lambda)$ only depends on $n$, $\Lambda$, and the splitting type of $K$ at every prime dividing $n$. Moreover, have $c_K(n,\Lambda)\leq c_K(n)\ll_\epsilon n^\epsilon N(n,D)$, where
\begin{equation}\label{eq:NakBound}
N(n,D):=
\prod_{\substack{p^2\nmid D\\p^e||n,\ e\geq 3}}p^{\lfloor e/2\rfloor}
\prod_{\substack{p^2\mid D\\p^e||n,\ e\geq 2}}p^{\lfloor e/2\rfloor}.
\end{equation}
\end{proposition}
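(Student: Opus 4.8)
The plan is to reduce the statement to a purely local computation at each prime $p\mid n$, and then to invoke the known classification of suborders of a fixed \'etale extension of $\Q_p$ together with their cubic resolvents. First I would observe that, by Theorem~\ref{th:quartic_param} applied over $\Z$, the data of a suborder $Q\subseteq\O_K$ of index $n$ together with a cubic resolvent $C$ of $Q$ is the same as the data of a $G(\Z)$-orbit on $V(\Z)$ whose associated quartic ring has index $n$ in $\O_K$; since $\O_K=\prod_v\O_{K_v}$ localizes and since both "being a suborder of index $n$" and "having a cubic resolvent satisfying $\Lambda$" are conditions that factor as a product of conditions at the primes $p\mid n$ (and the primes in $S$), the weighted count $c_K(n,\Lambda)$ factors as a product $\prod_{p^e\|n}c_{K_p}(p^e,\Lambda_p)$ of local weighted counts, with the $1/\Aut(Q)$ weight distributing appropriately over the local factors via Bhargava's mass formula. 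In particular $c_K(n,\Lambda)$ depends only on $n$, on $\Lambda$, and on the \'etale $\Q_p$-algebra $K\otimes\Q_p$ (equivalently, the splitting type of $K$ at each $p\mid n$) — this gives the first assertion.

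Next I would establish the inequality $c_K(n,\Lambda)\le c_K(n)$: imposing the extra local constraints of $\Lambda_p$ can only remove pairs $(Q_p,C_p)$ from the count, and all weights are nonnegative, so dropping $\Lambda$ only increases the (weighted) count. It therefore suffices to bound $c_K(n)$, and by multiplicativity it suffices to bound the local factor $c_{K_p}(p^e)$ for each prime power $p^e\|n$. Here I would use the description (going back to Bhargava, see also the Nakagawa-type estimates that the notation $N(n,D)$ alludes to) of the set of suborders of $\O_{K_p}$ of given index $p^e$ together with cubic resolvents: the number of such pairs, weighted by $1/\Aut$, is $O_\epsilon(p^{\epsilon e})$ times a factor that is $1$ when $e$ is small or when $p$ is unramified-ish in $K$, and otherwise grows like $p^{\lfloor e/2\rfloor}$. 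Concretely, one splits into the case $p^2\nmid D$ (so $\O_{K_p}$ is, up to a tame part, "maximal enough" that suborders only proliferate once $e\ge 3$) and the case $p^2\mid D$ (where already $e\ge 2$ suborders can proliferate), matching exactly the two products in \eqref{eq:NakBound}. The $n^\epsilon$ comes from the divisor-function-type bound on the number of orders of a fixed index once one has the crude per-prime bound.

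The main obstacle I expect is getting the local count of cubic-resolvent-equipped suborders with the precise exponent $\lfloor e/2\rfloor$ rather than something lossier like $e$ or $2e$. Bounding the number of suborders of $\O_{K_p}$ of index $p^e$ alone is classical, but one must also control, for each such suborder $Q$, the number of cubic resolvent rings $C$ of $Q$ — which is where the parametrization by $G(\Z_p)$-orbits on $V(\Z_p)$ and the content/discriminant bookkeeping ($\Delta(Q)=\Delta(C)=\Delta(A,B)$) is used to pin the index of $C$ and hence limit the possibilities. The cleanest route is probably to count $G(\Z_p)$-orbits on the relevant subset of $V(\Z_p)$ directly (elements $(A,B)$ with $\Res(A,B)$ of bounded content and $(A,B)$ congruent to the maximal model modulo the prescribed power of $p$), using the volume estimates for such "non-maximal at $p$" congruence families; this is exactly the input from the $p$-adic density computations underlying the $S_4$-field counts, and it yields the stated $N(n,D)$. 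I would then assemble the local bounds via multiplicativity and absorb all the $p^{\epsilon e}$ factors into a single $n^\epsilon$, completing the proof.
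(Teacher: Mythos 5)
Your treatment of the first assertion matches the paper's: $c_K(n,\Lambda)$ is multiplicative in $n$, and the local factor at $p$ depends only on $p^e\|n$, on $\Lambda_p$, and on the \'etale algebra $K\otimes\Q_p$, i.e.\ on the splitting type of $p$ in $K$; the paper dispatches this in one sentence exactly as you do. The inequality $c_K(n,\Lambda)\le c_K(n)$ is likewise immediate since the weights are nonnegative and $\Lambda$ only removes pairs.

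The gap is in the second assertion, which is the entire quantitative content of the proposition: the bound $c_K(n)\ll_\epsilon n^\epsilon N(n,D)$ with the exponent $\lfloor e/2\rfloor$ and the case split $p^2\nmid D$ (growth only for $e\ge 3$) versus $p^2\mid D$ (growth already for $e\ge 2$). You correctly identify this as the main obstacle, but you do not close it: you assert that the local count of index-$p^e$ suborders equipped with cubic resolvents has this shape "by the known classification" and then propose, as a route, to count $G(\Z_p)$-orbits on congruence subsets of $V(\Z_p)$ "using the volume estimates" from the $p$-adic density computations. A density or mass computation gives averaged information and does not by itself yield the required per-index, per-discriminant orbit bound; to make that route work you would have to actually carry out the local orbit count, including bounding for each suborder the number of its cubic resolvents, which is a nontrivial result of Nakagawa (refined by Bhargava). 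The paper does not reprove this either: it proves the proposition by citing Nakagawa, Bhargava, and Corollary 11.3 of the first paper in this series, where precisely the bound \eqref{eq:NakBound} is established. So your argument is acceptable only if you replace the sketched re-derivation by an explicit appeal to that known bound; as written, the crucial estimate is asserted rather than proved.
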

\noindent Note that we have $N(n,D)=N(n,(n,D))$. The first claim of the result is clear; indeed, the function $c_K(n,\Lambda)$ is multiplicative, and when $n$ is a power of $p$ depends only on the power of $p$ dividing $n$, $\Lambda_p$ (when $p\in S$), and $K\otimes\Q_p$. The second claim is due to works of Nakagawa \cite{nakagawa} and Bhargava \cite{BHCL3}, and is proved in \cite[Corollary 11.3]{ST_second_main_term_1}. We are now ready to prove the main result of this section.

\medskip

\noindent{\bf Proof of Theorem \ref{thm_red_points_count}:}
We fix $T\in\{(112),(13),D_4,(22)\}$ and denote $T$-compatible quartic splitting types at all primes dividing a positive integer $n$ by $\sigma_n=(\sigma_p)_{p\mid n}$. We denote the contribution to $N^{(i)}_\psi(\phi_{q,\Lambda};X)$ from suborders of $F_T$ by $N_T$. Since $c_K(n,\Lambda)$ depends only on $n$, $\Lambda$, and the splitting types of $K$ at primes dividing $n$, we may write $c_K(n,\Lambda)=c_\Lambda(n,\sigma_n)$ for every algebra $K\in F$ having splitting type $\sigma_n$.
Therefore, for a constant $\lambda>0$ to be optimized later, we have
\begin{equation}\label{eq:summing_suborders_temp}
N_T = \sum_T\sum_{\substack{n\leq X^{1/6+\lambda}\\q\mid n}}\sum_{\sigma_n}c_\Lambda(n,\sigma_n)\sum_{\substack{K\in F_T\\\sigma_n(K)=\sigma_n}}\psi\Bigl(\frac{|\Delta(K)|}{X/n^2}\Bigr)
+O(X^{5/6-\lambda+o(1)}),
\end{equation}
where the sum over $n>X^{1/6+\lambda}$ is bounded via following lemma:
\begin{lemma}
The number of quartic triples $(Q,C,r)$ with discriminant less than $X$ and index greater than $M$ is bounded by $O(X^{1+o(1)}/M)$.    
\end{lemma}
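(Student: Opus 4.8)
The plan is to count quartic triples $(Q,C,r)$ of bounded discriminant by first organizing them according to the algebra $K = Q \otimes \Q$, and then, for each fixed $K$, summing over suborders $Q \subseteq \O_K$ of varying index. Write $D = \Delta(K)$ and let $Q$ have index $n = [\O_K : Q]$, so that $|\Delta(Q)| = n^2 |D|$. The condition $\Delta(Q) < X$ forces $n^2 |D| < X$, and the condition ``index greater than $M$'' means $n > M$. So the quantity to bound is
\begin{equation*}
\sum_{\substack{K \text{ \'etale quartic}/\Q}} \ \sum_{\substack{n > M\\ n^2|\Delta(K)| < X}} c_K(n),
\end{equation*}
where $c_K(n)$ is the weighted number of pairs $(Q,C)$ (weighted by $1/|\Aut(Q)|$) with $Q \subseteq \O_K$ of index $n$ and $C$ a cubic resolvent — exactly the quantity controlled by Proposition \ref{prop:Nak}. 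The main input is the bound $c_K(n) \ll_\epsilon n^\epsilon N(n,D)$ with $N(n,D)$ as in \eqref{eq:NakBound}, together with the observation $N(n,D) = N(n,(n,D))$.

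First I would dispose of the dependence on $K$. For a fixed discriminant value $D$, the number of \'etale quartic $\Q$-algebras with $|\Delta| = |D|$ is $O(|D|^{o(1)})$ (this is a standard consequence of the fact that there are $O_\epsilon(Y^\epsilon)$ number fields of degree $\leq 4$ with a given discriminant, summed over the finitely many ways to write $K$ as a product). So after paying a factor $X^{o(1)}$, it suffices to bound
\begin{equation*}
\sum_{\substack{D\\ }} \ \sum_{\substack{n > M\\ n^2|D| < X}} N(n,(n,D)).
\end{equation*}
Next I would swap the order of summation, fixing $n > M$ first and summing over $D$ with $|D| < X/n^2$. For fixed $n$, I would further decompose the sum over $D$ according to $d := (n,D)$, which ranges over divisors of $n$; once $d$ is fixed, $N(n,(n,D)) = N(n,d)$ is a constant depending only on $n$ and $d$, and the number of $D$ with $|D| < X/n^2$ and $(n,D) = d$ is $O(X/(n^2 d))$ (crudely, the number of multiples of $d$ up to $X/n^2$). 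This gives a bound of the shape
\begin{equation*}
X^{o(1)} \cdot X \sum_{n > M} \frac{1}{n^2} \sum_{d \mid n} \frac{N(n,d)}{d}.
\end{equation*}

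The crux is then to show $\sum_{d \mid n} N(n,d)/d \ll n^{o(1)}$, after which the outer sum $\sum_{n > M} n^{-2+o(1)} \ll M^{-1+o(1)}$ finishes the proof with the claimed bound $O(X^{1+o(1)}/M)$. Since $N(n,d)$ is multiplicative in $n$ (and $d \mid n$), it suffices to check this prime by prime: for $p^e \| n$ and $d = p^f$ with $0 \le f \le e$, one has $N(p^e, p^f) = p^{\lfloor e/2 \rfloor}$ when $f \geq 2$ (or $f \geq 3$, depending on the bracketed case in \eqref{eq:NakBound}) and $N(p^e, p^f) = p^{\lfloor e/2\rfloor}$ or smaller in the remaining cases — in all cases $N(p^e,p^f) \le p^{\lceil e/2 \rceil}$ roughly. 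The local factor is then $\sum_{f=0}^{e} N(p^e,p^f) p^{-f}$, and the point is that $N(p^e,p^f)$ is governed by $\lfloor e/2 \rfloor$ while the favorable values occur for $f$ large, so the dominant term is $f = e$, giving $p^{\lfloor e/2 \rfloor - e} = p^{-\lceil e/2 \rceil} \le 1$; summing the geometric-type series over $f$ contributes a bounded constant per prime, and the product over $p \mid n$ is $\tau(n)^{O(1)} = n^{o(1)}$. I expect this local bookkeeping — carefully matching the exponent $\lfloor e/2 \rfloor$ in $N$ against the $p^{-f}$ savings, keeping track of the two cases ($p^2 \mid D$ versus $p^2 \nmid D$) in the definition of $N(n,D)$ — to be the only subtle point; everything else is routine divisor-sum manipulation. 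An alternative, cleaner route is to simply invoke the bound $c_K(n) \ll n^{1+o(1)}$ which follows directly from Proposition \ref{prop:Nak} since $N(n,D) \le n$ always, plug it in, and sum $\sum_{n>M} n^{1+o(1)} \cdot (\#\{D : n^2|D|<X\}) \ll \sum_{n>M} n^{1+o(1)} X/n^2 \ll X^{1+o(1)}/M$; this avoids the delicate local analysis entirely, at no cost to the stated bound, so I would present this short version.
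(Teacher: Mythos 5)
Your plan to fiber over the algebra $K$ and invoke Proposition \ref{prop:Nak} is the right starting point (it is also the paper's), but both versions you give have genuine gaps. The ``short version'' you say you would present does not prove the lemma: using only the pointwise bound $c_K(n)\ll n^{1+o(1)}$ (from $N(n,D)\le n$), your final sum is $X^{1+o(1)}\sum_{M<n\le\sqrt{X}}n^{-1+o(1)}$, and this harmonic-type tail is of size $X^{o(1)}$, not $M^{-1+o(1)}$; so this route only yields $O(X^{1+o(1)})$ with no saving in $M$ at all, and the last displayed inequality ``$\ll X^{1+o(1)}/M$'' is a miscalculation. The entire content of the lemma is that $c_K(n)$ is $n^{o(1)}$ \emph{on average}: from \eqref{eq:NakBound}, $N(n,D)=1$ unless $n$ has suitable square/cube-full parts, so the number of triples inside a fixed $K$ with index at most $Z$ is $O(Z^{1+o(1)})$. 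That cumulative bound, together with the aggregate count $\#\{K:|\Delta(K)|\le Y\}\ll Y^{1+o(1)}$ of \'etale quartic algebras, is exactly the pair of inputs the paper's proof runs on (dyadically decompose the index into ranges $[N,2N)$ with $N\ge M$: there are $\ll (X/N^2)^{1+o(1)}$ possible $K$ and $\ll N^{1+o(1)}$ triples in each, and summing over dyadic $N$ gives $X^{1+o(1)}/M$). Degrading $N(n,D)$ to $n$ discards precisely the structure that produces the $1/M$.

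The longer route has two further problems. First, the ``crux'' claim $\sum_{d\mid n}N(n,d)/d\ll n^{o(1)}$ is false: if $p^e\,\|\,n$ with $e\ge 3$, then $N(p^e,p^f)=p^{\lfloor e/2\rfloor}$ for \emph{every} $0\le f\le e$ (for $f\le 1$ this comes from the $p^2\nmid D$ branch of \eqref{eq:NakBound}), so the local factor is $\asymp p^{\lfloor e/2\rfloor}$ --- the dominant term is $f=0$, not $f=e$ --- and the divisor sum is of size $\prod_{e_p\ge 3}p^{\lfloor e_p/2\rfloor}$, which can be as large as $\sqrt{n}$. The route can be repaired, since the weight $n^{-2}$ still beats $\sqrt n$, but $\sum_{n>M}n^{-3/2}\ll M^{-1/2}$ alone is insufficient: you must exploit that this product is supported on $n$ with large cube-full part (e.g.\ $\sum_n\prod_{e_p\ge3}p^{\lfloor e_p/2\rfloor}\,n^{-1-\epsilon}<\infty$ plus Rankin's trick) to recover $M^{-1+o(1)}$, and that step is absent. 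Second, your reduction from summing over $K$ to summing over integer discriminants $D$ invokes ``there are $O(D^{o(1)})$ \'etale quartic algebras with $|\Delta|=D$,'' which is not a known theorem: for quartic and cubic fields this is the open discriminant-multiplicity problem, governed by $2$-torsion of class groups of cubic fields and $3$-torsion of class groups of quadratic fields (the paper uses such a multiplicity bound only for algebras with reducible resolvent, where genus theory applies). The correct substitute is the aggregate bound $\#\{K:|\Delta(K)|\le Y\}\ll Y^{1+o(1)}$; combined with the $D$-independent bound $c_K(n)\ll n^{o(1)}\prod_p p^{\lfloor e_p/2\rfloor}$ and the tail estimate just described, this yields a complete proof in the spirit of the paper's.
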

\begin{proof}
The proof uses two inputs: first, the bound of $O(Y^{1+o(1)})$ on the number of nondegenerate \'etale quartic extensions of $Q$ with discriminant bounded by $Y$. Second, given an \'etale quartic algebra $K$ over $\Q$, the bound of $O(Z^{1+o(1)})$ on the number of quartic triples $(Q,C,r)$, where $Q$ is a suborder of the ring of integers of $K$ having index bounded by $Z$. With these two inputs at hand, we proceed as follows. If $(Q,C,r)$
\end{proof}

In the previous subsection, we prove for every choice of $T$, that the smooth sum over $K\in F$ satisfying specified splitting conditions at primes dividing $n$ has the following expansion:
\begin{equation}\label{eq:sum_algebras_expansion}
\sum_{\substack{K\in F_T\\\sigma_n(K)=\sigma_n}}\psi\Bigl(\frac{|\Delta(K)|}{Y}\Bigr)=\sum_{r\in\{1,5/6\}}\bigl(C_r'(\sigma_n,F)\log Y+C_r(\sigma_n,F)\bigr)Y^r+O(Y^{\delta+o(1)}n^\theta),
\end{equation}
for some constants $C_r'(\sigma_n,F)$, $C_r(\sigma_n,F)$, $\delta$, and $\theta$. 
Using \eqref{eq:sum_algebras_expansion} to evaluate the innermost sum in \eqref{eq:summing_suborders_temp}, we obtain
\begin{equation*}
\begin{array}{rcl}
\displaystyle N_T &=&\displaystyle
\sum_{r\in\{1,5/6\}}\bigl(C_{r,q}^{\red '}(T)\log X+C_{r,q}^\red(T)\bigr)X^r
+O_q(E_T),
\end{array}
\end{equation*}
where the constants $C_{r,q}^{\red '}(T)$ and $C_{r,q}^\red(T)$ are given by
\begin{equation}\label{eq:T_cont_rings}
\begin{array}{rcl}
C_{r,q}^{\red '}(T) &=& \displaystyle\sum_{\substack{n\geq 1\\q\mid n}}\sum_{\sigma_n}c(n,\sigma_n)\frac{C'_r(\sigma_n,F)}{n^{2r}},
\\[.2in]
C_{r,q}^\red(T) &=& \displaystyle
\displaystyle\sum_{\substack{n\geq 1\\q\mid n}}\sum_{\sigma_n}c(n,\sigma_n)\frac{C_r(\sigma_n,F)-2C_r'(\sigma_n,F)\log n}{n^{2r}},
\end{array}
\end{equation}
and the error term $E$ is bounded by
\begin{equation*}
E_T\ll_q X^{5/6-\lambda+o(1)}+X^{\delta+o(1)}\sum_{n\leq X^{1/6+\lambda}}
n^{\theta-2\delta}\ll X^{5/6-\lambda+o(1)}+X^{\delta}+X^{1/6+2\delta/3+\theta/6+\lambda(1+\theta-2\delta)}.
\end{equation*}
Note that the sums defining the constants $C_{r,q}^{\red '}(T)$ and $C_{r,q}^\red(T)$ converge absolutely thanks to the previously obtained bounds on $C_r'(\sigma_n,F)$, and the bounds on $c(n,\sigma_n)$ implied by Proposition~\ref{prop:Nak}. For $T\in\{(112),(13),D_4,(22)\}$, we see from Lemma \ref{lem:quad_field_count}, Theorem \ref{thm:cubic_field_count}, Theorem \ref{thm:D4-fields}, Proposition~\ref{prop:22_count}, respectively, we see that the values of the arising $(\delta,\theta)$ are $(1/2,1/4)$, $(2/3,2/3)$, $(3/5,2/5)$, $(1/2,1/2)$ and $(3/4,1/4)$, respectively, and that $D_{5/6}'$ is always $0$. 
We optimize the value of $\lambda$ in each of these cases, (taking $\lambda=1/12$ for each case actually suffices), obtaining $E_T\ll_q X^{3/4+o(1)}$. We have thus proven the first claim of Theorem \ref{thm_red_points_count}.
Finally, we use \eqref{eq:T_cont_rings} to deduce that
\begin{equation*}
C_{r,q}^{\red '}(T)\ll q^{-2};\quad C_{r,q}^\red(T)\ll q^{-5/3}.
\end{equation*}
Summing over types $T$ immediately yields the second claim of Theorem \ref{thm_red_points_count}. $\Box$

\medskip

We end with the following result.
Let $\Sigma=(\Sigma_v)_{v\in S}$ be a finite collection of local specifications for \'etale quartic algebras over $\Q$. Let $F^\red(\Sigma)$ denote the family of \'etale quartic extensions $K$ of $\Q$ such that for every $v\in S$ we have $K\otimes\Q_v\in\Sigma_v$, and such that $K$ is not an $S_4$- or $A_4$-extension. For a smooth function $\psi:\R_{\geq 0}\to\R$, we define
\begin{equation*}
N^\red_\Sigma(\psi,X):=
\sum_{K\in F^\red(\Sigma)}\psi\Bigl(\frac{|\Delta(K)|}{X}\Bigr).
\end{equation*}
We know that $N^\red_\Sigma(\psi,X)$ has a power series expansion
\begin{equation*}
N^\red_\Sigma(\psi,X)=\sum_{r\in\{1,5/6\}}\bigl( c'_r(\Sigma)\log X+c_r(\Sigma)\bigr)X^r+O(X^{3/4+o(1)}),
\end{equation*}
for some constants $c_r(\Sigma)$ and $c_r(\Sigma)$.
Let $\Lambda=(\Lambda_v)_{v\in S}$ be the finite collection of local specifications on quartic rings associated to $\Sigma$, i.e., for the primes $p\in S$, we set $\Lambda_p$ to be the set of maximal orders of $\Sigma_p$ and for the infinite place $v$, we set $\Lambda_v=\Sigma_v$. Then we have the following consequence of the proof of Theorem \ref{thm_red_points_count}.

\begin{corollary}\label{cor:fields_from_rings_from_fields}
With notation as above, we have $c_{5/6}'(\Sigma)=0$ and 
\begin{equation*}
c'_1(\Sigma)=\sum_q\mu(q)C_{1,q}^{\red '}(\Lambda,\psi);\quad
c_r(\Sigma)=\sum_q\mu(q)C_{1,q}^{\red '}(\Lambda,\psi).
\end{equation*}
\end{corollary}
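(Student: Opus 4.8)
The plan is to run the sieve relating orders to maximal orders, but now in the setting of \emph{fields} rather than rings, and to compare the resulting identities with the expansions established in the proof of Theorem \ref{thm_red_points_count}. The starting observation is that an \'etale quartic $\Q$-algebra $K$ lying in $F^\red(\Sigma)$ is exactly an algebra counted in Theorem \ref{thm_red_points_count} through its \emph{maximal} order $\O_K$, with local conditions $\Lambda_v$ obtained from $\Sigma_v$ by taking maximal orders at the finite places. Thus, after a M\"obius inversion over the non-maximality locus, $N^\red_\Sigma(\psi,X)$ can be written as $\sum_q \mu(q)$ times the count of reducible-type triples $(Q,C,r)$ that are non-maximal at every prime dividing $q$ \emph{and} whose overorder-generated field lies in $F^\red(\Sigma)$ --- which is precisely $N^{(i)}_\psi(\phi_{q,\Lambda};X)$ restricted to the reducible types, \emph{except} that there is no $1/\#\Aut$ weighting on the ring side versus the field side. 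Here one must be slightly careful: in Theorem \ref{thm_red_points_count} the weight attached to $(Q,C,r)$ is $1/\#\Aut(Q)$, whereas fields are counted with weight $1$; but since $\Aut(Q)$ for an order in a reducible-type \'etale algebra equals $\Aut$ of the generic fiber, and since the congruence data $\Lambda_p$ consisting only of maximal orders forces $Q$ to be an honest suborder of $\O_K$, these weights match up after one decomposes into the field types $T\in\{(112),(13),D_4,(22)\}$ and uses that the count $c_K(n,\Lambda)$ appearing in the proof of Theorem \ref{thm_red_points_count} already incorporates the $1/\#\Aut$ factor consistently.

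Concretely, I would proceed as follows. First, fix a type $T$ and recall from \eqref{eq:summing_suborders_temp}--\eqref{eq:T_cont_rings} that, for the specific choice $\Lambda_p = $ maximal orders at $p\in S$, the coefficients $C^{\red'}_{r,q}(T)$ and $C^\red_{r,q}(T)$ are given by the absolutely convergent sums over $n$ with $q\mid n$ of $c(n,\sigma_n)$ times the field-expansion coefficients $C'_r(\sigma_n,F)/n^{2r}$, respectively $(C_r(\sigma_n,F)-2C'_r(\sigma_n,F)\log n)/n^{2r}$. Second, apply M\"obius inversion: because every \'etale reducible-type algebra $K$ has a unique maximal order and $\sum_{q\mid m}\mu(q)$ detects $m=1$, summing $\mu(q)C^{\red'}_{1,q}(T)$ over squarefree $q$ collapses the $n$-sum to $n=1$, leaving exactly $C'_1(\sigma_1,F)$, which is the $X\log X$-coefficient in \eqref{eq:sum_algebras_expansion} for the field count with no congruence constraint beyond $\Sigma$ --- i.e. $c'_1(\Sigma)$ after summing over $T$. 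The absolute convergence of $\sum_q|C^{\red'}_{1,q}|$ and $\sum_q|C^\red_{1,q}|$, already proved in Theorem \ref{thm_red_points_count}, is what licenses interchanging the $q$-sum and the $n$-sum. Third, the same manipulation for the $X^{5/6}$ piece gives $c'_{5/6}(\Sigma)=\sum_q\mu(q)C^{\red'}_{5/6,q}(\Lambda,\psi)$; but the proof of Theorem \ref{thm_red_points_count} records that $D'_{5/6}$ (the $X^{5/6}\log X$ field coefficient) is always $0$ for every reducible type, so each $C^{\red'}_{5/6,q}=0$ and hence $c'_{5/6}(\Sigma)=0$, as claimed. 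The constant-term identity $c_1(\Sigma)=\sum_q\mu(q)C^\red_{1,q}$ follows identically, using the $C^\red_{r,q}$ formula and noting that the $-2C'_r\log n$ correction vanishes at $n=1$.

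The main obstacle I expect is not the M\"obius bookkeeping but the \emph{consistency of the weighting and of the local specifications}: one has to check that the passage ``family of fields $F^\red(\Sigma)$'' $\leftrightarrow$ ``family of rings with $\Lambda_p=$ maximal orders'' really is a clean bijection on the nose --- in particular that no reducible-type field is double-counted through two different cubic resolvent structures, that imposing $\Lambda_p$ = maximal at $p\in S$ exactly reproduces the condition $K\otimes\Q_p\in\Sigma_p$, and that the automorphism weights on the two sides agree after the type decomposition. Once that dictionary is nailed down, the remaining steps are the routine interchange of absolutely convergent sums and reading off coefficients from the already-established expansions \eqref{eq:sum_algebras_expansion} and \eqref{eq:T_cont_rings}. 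A minor point to state carefully is that $\sum_{q\mid n}\mu(q)=[n=1]$ is being used only over squarefree $q$ coprime to the ``forced ramification'' modulus, which is harmless since $c_K(n,\Lambda)$ is multiplicative and supported away from those primes in the relevant sense.
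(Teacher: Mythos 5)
Your proposal is correct and follows essentially the same route as the paper: it uses the formulas \eqref{eq:T_cont_rings}, interchanges the $q$- and $n$-sums by absolute convergence so that $\sum_{q\mid n}\mu(q)$ collapses everything to $n=1$ where $c(1,\sigma_1)=1$ (maximal order, unique cubic resolvent), and derives $c_{5/6}'(\Sigma)=0$ from the vanishing of the $X^{5/6}\log X$ coefficients in the type-by-type field expansions \eqref{eq:sum_algebras_expansion}. The only cosmetic differences are that you route the $c_{5/6}'(\Sigma)=0$ claim through the ring-side coefficients $C^{\red'}_{5/6,q}$ rather than reading it off the field counts directly, and that you make explicit the field/ring dictionary and the vanishing of the $-2C_r'\log n$ correction at $n=1$, which the paper leaves implicit.
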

\begin{proof}
We have determined power saving expansions, up to an error of $O(X^{3/4+o(1)})$, for the smoothed number of \'etale quartic extensions of $\Q$ of types $D_4$, $(22)$, $(13)$, and $(112)$. The remaining types are negligible in number. These power saving expansions have no $X^{5/6}\log X$ terms in them, and so the first claim follows immediately. For the second claim, we proceed type by type. The asymptotics of the fields count in $F^\red(\Sigma)$ having type $T$ can be read off from \eqref{eq:sum_algebras_expansion}, by setting $n=1$ (in which case $\sigma_n(K)=\sigma_n$ becomes an empty condition). Meanwhile, the contribution $C_{1,q}^{\red '}(T)$ and $C_{r,q}^{\red}(T)$ from a type $T$ to $C_{1,q}^{\red '}$ and $C_{r,q}^{\red}$, respectively, are given in \eqref{eq:T_cont_rings}. We write
\begin{equation*}
\begin{array}{rcl}
\displaystyle\sum_q \mu(q)C_{1,q}^{\red '}(T)
&=&
\displaystyle\sum_q\mu(q)\sum_{\substack{n\geq 1\\q\mid n}}\sum_{\sigma_n}c(n,\sigma_n)\frac{C'_1(\sigma_n,F)}{n^{2r}}
\\[.2in]&=&
\displaystyle\sum_n\sum_{\sigma_n}c(n,\sigma_n)\frac{C'_1(\sigma_n,F)}{n^{2r}}\sum_{\substack{q\mid n}}\mu(q)
\\[.15in]
&=&\displaystyle c(1,\sigma_1)C'_1(\sigma_1,F),
\end{array}
\end{equation*}
and similarly for the sums over $q$ of $C_{r,q}^\red(T)$. The corollary now follows since $c(1,\sigma_1)$, the number of suborders of a maximal order of index-$1$, is equal to one, and this suborder (being maximal) has a unique cubic resolvent.
\end{proof}

\section{Bounding reducible elements in the generic body}
In this section, we estimate the number of reducible elements in the  generic body. Specifically, we prove the following result.
Let $\chi_{\ngen,\gb}$ denote the characteristic function of the set of non-generic elements $V(\Z)$ that lie in the  generic body (i.e., outside the slices $a_{11}=b_{11}=0$ and $\det(A)=0$). Given a periodic $G(\Z)$-invariant function $\phi:V(\Z)\to\R$, let $\phi^{\ngen,\gb}$ denote $\phi\cdot \chi_{\ngen,\gb}$. Then we have the following result.

\begin{theorem}\label{th_red_main_body_final}
With notation as above, we have
\begin{equation*}
\I_\eta(\phi^{\ngen,\gb},\cB;X)=C^{\ngen,gb}_\phi X^{5/6}+O(X^{5/6-\delta}),
\end{equation*}
for some $\delta>0$. Moreover, if $q$ is a squarefree integer relatively prime to $N$, the modulus defining $\phi$, and if $\phi_q$ denotes the product of $\phi$ with the characteristic function of the set of elements in $V(\Z)$ which are nonmaximal at $q$, then we have 
\begin{equation*}
C^{\ngen,gb}_{\phi_q}=C^{\ngen,gb}_{\phi}\Bigl(\frac{1}{q}+O(q^{-2})\Bigr).
\end{equation*}
\end{theorem}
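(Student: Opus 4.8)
The plan is to reduce Theorem~\ref{th_red_main_body_final} to a counting statement about reducible elements of $V(\Z)$ lying in a fixed region of $V(\R)$ that avoids the two slices $a_{11}=b_{11}=0$ and $\det(A)=0$, and then to run a geometry-of-numbers count. First I would recall, via Corollary~\ref{cor:counting_red_rings_I} and Proposition~\ref{prop:I_to_N}, that for any admissible $\cB$ and any periodic $\phi$, the integral $\I_\eta(\phi,\cB;X)$ is (a constant times) a smoothed orbit count $N^{(i)}_\psi(\phi;X)$. So it suffices to understand the count of $G(\Z)$-orbits on the set of non-generic $x\in V(\Z)$ with $\chi_{\ngen,\gb}(x)=1$, weighted by $\phi$. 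By the Proposition at the end of \S2.1, such $x$ are either reducible or have reducible resolvent; but reducible $x$ lie in the $\Z$-translate of the $11$-slice (after a $G(\Z)$-transformation putting the common rational zero into standard position), and elements with reducible resolvent whose resolvent binary cubic has a rational root can be moved into the $\det(A)=0$ slice. The point of restricting to the generic body is precisely that it excludes the ``deep cusp'' contributions; what remains are non-generic orbits that genuinely meet the main body and shallow cusp, and these are exactly the orbits counted (from the fields-and-orders side) in Theorem~\ref{thm_red_points_count} minus the part living in the slices. Concretely, I would write $\chi_{\ngen,\gb} = \chi_{\ngen} - \chi_{\ngen, a_{11}=b_{11}=0} - \chi_{\ngen,\det(A)=0} + \chi_{\ngen,\,\text{both}}$ and handle each piece; the full $\chi_\ngen$ piece has the expansion $C'X\log X + CX + C_{5/6}X^{5/6}+O(X^{3/4+o(1)})$ from Corollary~\ref{cor:counting_red_rings_I}, and the slice pieces are lower-dimensional sublattices whose point counts have main terms of orders $X\log X$ and $X$ (the slices are the ``dominant'' part of the deep cusp, contributing the $X$ and $X\log X$ terms), so that the difference has leading order exactly $X^{5/6}$: the $X\log X$ and $X$ terms must cancel between the full count and the slice counts, leaving $\I_\eta(\phi^{\ngen,\gb},\cB;X)=C^{\ngen,gb}_\phi X^{5/6}+O(X^{5/6-\delta})$. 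Establishing that cancellation—i.e.\ that the generic-body count really is $O(X^{5/6}\log X)$ with no larger terms, and in fact has a clean $X^{5/6}$ main term with power-saving error—requires that the slice counts be computed to matching precision; this is standard geometry-of-numbers over the $11$-slice and $\det$-slice, of the type in \S\S4--5, and I would invoke those results rather than redo them.

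For the second, multiplicativity-in-$q$ statement, the plan is to track how the leading constant $C^{\ngen,gb}_\phi$ depends on imposing the extra congruence condition ``non-maximal at $q$'' at a prime $q \nmid N$. Because $q$ is coprime to the modulus $N$ of $\phi$, and because the condition of non-maximality at $q$ is a $G(\Z_q)$-invariant condition depending only on $x \bmod q^2$ (indeed on $x \otimes \Z_q$), the $X^{5/6}$ leading constant factors as an Euler product and the effect of $\phi \mapsto \phi_q$ is to multiply by the local factor at $q$. That local factor is, up to the global normalizing constants, the $p$-adic density (for $p=q$) of non-maximal reducible triples sitting inside the relevant slice-type configuration, divided by the density with no condition at $q$; one computes this density exactly as in the proof of Theorem~\ref{thm_red_points_count} via the numbers $c_K(n,\Lambda)$ of Proposition~\ref{prop:Nak}, whose generating Dirichlet series in the index variable has an Euler factor at $q$ of the shape $1 + (\text{leading term})/q + O(1/q^2)$. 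Since the constant $C^{\ngen,gb}_\phi$ is assembled from exactly these local data (the $X^{5/6}$ term in \eqref{eq:T_cont_rings} is $\sum_{q\mid n}$ of a multiplicative function), imposing $q \mid n$ forces the $n=1$ term to drop out and replaces the Euler factor at $q$ by its ``non-trivial part,'' which is $1/q + O(1/q^2)$ by the bound $c(n,\sigma_n) \ll_\epsilon n^\epsilon N(n,D)$: the $n=q$ term contributes the $1/q$, the $n=q^2,q^3,\dots$ terms are $O(1/q^2)$. I would make this precise by writing $C^{\ngen,gb}_{\phi_q} = C^{\ngen,gb}_\phi \cdot \big(q^{-2r}\,(\text{local count at }q \text{ of index-}q \text{ non-maximal configs}) + O(q^{-2})\big)$ with $r=5/6$, noting $q^{-2r} = q^{-5/3}$ and that the index-$q$ local count is $\asymp q^{2/3}$ (this is the content of the $N(n,D)$ bound combined with the fact that for $q^2 \nmid \Delta$ one gains a factor $q^{\lfloor 1/2 \rfloor} = 1$, so one must be slightly careful—actually the relevant exponent tracking is exactly what gives $q^{-5/3} \cdot q^{2/3} = q^{-1}$), yielding the stated $1/q + O(1/q^2)$.

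The main obstacle I anticipate is the bookkeeping needed to show that the $X\log X$ and $X$ terms of the full reducible count are entirely absorbed by the slice contributions, so that the generic-body count is genuinely of size $X^{5/6}$ with a power-saving error term, rather than merely $O(X^{5/6}\log X)$ or worse. This is where the inclusion-exclusion over the two slices has to be done carefully: one needs the point counts on the $11$-slice and the $\det$-slice (and their intersection), with congruence weights, to sufficient precision, and one needs to know these are the \emph{only} sources of the top-order terms—equivalently, that every non-generic element contributing to the $X$ or $X\log X$ terms lies in one of the slices. That last fact is the geometric heart of the matter and should follow from the classification in \S2.1 together with the cusp analysis (reducible $\Leftrightarrow$ common rational zero $\Rightarrow$ conjugate into the $11$-slice; reducible resolvent with a rational root of the resolvent cubic $\Rightarrow$ conjugate into the $\det$-slice; the residual cases—e.g.\ $V_4$, $C_4$, irreducible-resolvent $D_4$—are few enough to contribute only to $X^{5/6}$), but making it airtight, with uniform error terms in the congruence modulus, is the part that will take real work; everything downstream (the Euler-product factorization and the $1/q+O(1/q^2)$ computation) is then a routine, if delicate, density calculation built on Proposition~\ref{prop:Nak}.
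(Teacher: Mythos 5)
Your plan does not match how this theorem is actually proved, and the two places where you defer the work are precisely where the content lies. First, your inclusion--exclusion $\chi_{\ngen,\gb}=\chi_{\ngen}-\chi_{\ngen,a_{11}=b_{11}=0}-\chi_{\ngen,\det(A)=0}+\chi_{\ngen,\mathrm{both}}$ requires power-saving evaluations of $\I_\eta$ for the slice pieces, but the slices are not $G(\Z)$-invariant, so Proposition \ref{prop:I_to_N} does not convert those pieces into orbit counts, and no result in the paper (nor any ``standard geometry-of-numbers'' you can invoke) gives their $X\log X$, $X$, and $X^{5/6}$ coefficients: the slice contribution contains the entire deep cusp, which the paper deliberately never treats by point counting. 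Your appeal to ``results of the type in \S\S4--5'' is circular --- \S4 \emph{is} this theorem, and \S5 counts only points \emph{outside} the slices. In the paper the logic runs the other way: the slice contribution is later recovered as $\I_\eta(\phi^{\ngen})-\I_\eta(\phi^{\ngen,\gb})$, with Theorem \ref{th_red_main_body_final} as an input. The actual proof of the theorem is direct and consists of two arguments absent from your proposal: Proposition \ref{prop_reducible_bound_resolvent}, bounding elements with reducible resolvent and $\det(A)\neq 0$ by $O(X^{23/28+o(1)})$ (via counting reducible binary cubics with $a\neq 0$, the small-$2$-torsion input from \cite[Lemma 12]{dodqf}, and a dyadic analysis over cubefull discriminant parts using Proposition \ref{prop:Nak}); and Proposition \ref{prop_reducible_bound_P2Q}, obtaining $C_\red(\phi)X^{5/6}+O(X^{5/6-\theta})$ for reducible elements with $(a_{11},b_{11})\neq(0,0)$ by fibering over the common rational zero $[r_1:r_2:r_3]$, using the index divisibility of Lemma \ref{lem_red_index} to discard large heights and the cuspidal range, and summing a convergent series $c_r\ll h^{-4}$ over zeros of small height.

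Second, your argument for $C^{\ngen,gb}_{\phi_q}=C^{\ngen,gb}_{\phi}(1/q+O(q^{-2}))$ is built on the wrong mechanism. The Euler-factor structure of \eqref{eq:T_cont_rings} governs the constants $C^{\red}_{5/6,q}$ of the \emph{full} non-generic count, and those decay like $q^{-5/3}$ (as the paper states); if $C^{\ngen,gb}_{\phi_q}$ were assembled from that data with the factor $q^{-2r}$, $r=5/6$, it would decay at the same rate, contradicting the claimed $1/q$. Your patch --- an ``index-$q$ local count $\asymp q^{2/3}$'' --- is unsupported and in fact inconsistent with Proposition \ref{prop:Nak}, which gives $c_K(q)\ll q^{o(1)}$ for squarefree $q$ away from $\Delta$. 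The correct (and quite different) source of the $1/q$ is geometric: by Proposition \ref{prop_reducible_bound_P2Q} the constant factors as $C_\red\cdot C(\phi_\fin)$ with $C(\phi_\fin)$ the density of $\phi$ on the slice $a_{11}=b_{11}=0$, and Lemma \ref{lem_density_nonmax_00} computes the density of non-maximality at $q$ on that slice to be $1/q+O(1/q^2)$, using Lemma \ref{lem_red_index}(2): non-maximality is forced by the vanishing mod $q$ of a $2\times 2$ determinant in the coefficients, an event of probability $1/q+O(1/q^2)$, all other causes of non-maximality having density $O(1/q^2)$. The crucial point, which your proposal obscures, is exactly this \emph{difference} in decay rates ($q^{-1}$ here versus $q^{-5/3}$ for the reducible-ring constants); it is what drives Lemma \ref{lem:constant_cancel} later in the paper.
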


In \S4.1, we obtain sufficiently small bounds on the number of elements with reducible resolvent, and in \S4.2, we obtain asymptotics on the number of reducible elements. Combining them immediately yields Theorem \ref{th_red_main_body_final}.

\subsection{Elements with reducible cubic resolvent}

\begin{proposition}\label{prop_reducible_bound_resolvent}
Let $\phi$ denote the characteristic function of the set of elements $(A,B)\in V(\Z)$ such that $\Res(A,B)$ has a rational root and $\det(A)\neq 0$. Let $\cB:V(\R)\to\R$ be a smooth function with compact support away from the discriminant-0 locus. Then we have
\begin{equation}\label{eq:red_res_prop}
\I_\eta(\phi,\cB;X)=\displaystyle\int_{g\in\FF}\sum_{x\in V(\Z)}\phi(x)(g\cB)(x)\eta\Big(\frac{\lambda(g)}{X^{1/12}}\Big)dg = O(X^{23/28+o(1)}).
\end{equation}
\end{proposition}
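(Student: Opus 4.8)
The plan is to bound $\I_\eta(\phi,\cB;X)$ by a point count in the cusp, by restricting $\FF$ so that the lattice points $x\in g\cB\cap V(\Z)$ have bounded coordinates of size controlled by $X$ and by the Iwasawa torus parameters $(t,s_1,s_2)$, and then to exploit the constraint that $\Res(A,B)=4\det(Ax-By)$ has a rational root. Writing the binary cubic form $\Res(A,B)(x,y)=c_0x^3+c_1x^2y+c_2xy^2+c_3y^3$, having a rational root forces a nontrivial polynomial relation among the $c_i$ (a point on the discriminant-type subvariety cut out by reducibility); equivalently one of the $c_i$ is constrained once the others are fixed, or by a change of variables one may assume $c_0=0$, i.e.\ $\det(A)=0$ — but we have excluded $\det(A)=0$, so the root is of the form $(\alpha:1)$ with $\alpha\neq 0$ rational, and clearing denominators this cuts out a codimension-one family. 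The effect is that in each region of the cusp the number of relevant $x$ drops by roughly a full power of a torus coordinate compared to the trivial bound $X^{1}$ on all of $V(\Z)^{\Delta\neq 0}$.

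First I would set up the integral geometrically as in \eqref{eq:FF_int}: fix a Siegel set $\FF=\{ \lambda n(u) a(t,s_1,s_2) k : t,s_1,s_2 \gg 1,\ u\in[0,1)^{\dim N}\}$, use left-$N(\Z)$-invariance to average $\phi$ and $\cB$ over $u$, and observe $(g\cB)(x)$ is supported where every coordinate $a_{ij}(x), b_{ij}(x)$ is $\ll$ (an explicit monomial in $t,s_1,s_2$)$\cdot \lambda \cdot X^{1/12}$ times a bounded factor, with $\lambda\asymp X^{1/12}$ on the support of $\eta(\lambda/X^{1/12})$. Then $\I_\eta(\phi,\cB;X)\ll \int_{\FF^{\ge}} \#\{x\in V(\Z): \phi(x)=1,\ x\in \lambda a\cdot\mathrm{supp}(\cB)\}\, dg$ where $dg = t^{-2}s_1^{-6}s_2^{-6}\,d^\times\lambda\,d^\times s$. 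The standard geometry-of-numbers step is to estimate the inner count by a product over coordinates: it is $\ll \prod_{(i,j)} \max(1, R_{ij})$ where $R_{ij}$ is the box-side for $a_{ij}$ (and $b_{ij}$), and crucially the coordinates $a_{11},b_{11}$ (and the other "small" cusp coordinates) have $R_{ij}\ll 1$ so contribute $1$, which is exactly where one usually loses — but here the reducibility of the resolvent lets us replace the count of all lattice points in the box by the count of lattice points on the codimension-one reducibility locus, saving a factor of roughly the largest box-side $\asymp X^{1/12}\cdot(\text{top torus monomial})$ in the relevant range. I would then carry out the resulting torus integral $\int_{t,s_1,s_2\gg1} (\cdots) \,d^\times t\,d^\times s_1\,d^\times s_2$, which converges off a boundary stratum and is dominated by its value near the bottom of the cusp, producing the claimed power $X^{23/28+o(1)}$ after optimizing how the saved factor is distributed (one typically splits into the "shallow" part of the cusp, where the box is still fat enough that the reducibility locus has many points and one uses the codimension-one count, versus the "deep" part, where boxes are so thin that most of the codimension-one locus is empty and one uses a cruder count; the exponent $23/28$ is what balances these).

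The main obstacle I expect is making the codimension-one saving uniform and honest across all of the cusp: the reducibility locus $\{\Res(A,B)\text{ has a rational root}, \det A\neq0\}$ is not a linear subspace, so one cannot naively "save a coordinate." The clean way is to fiber over the rational root: a reducible resolvent with $\det A\neq 0$ corresponds (up to the $\GL_2(\Z)$-action on the resolvent binary cubic, which one can use to put the rational root in a reduced form of bounded height relative to $X$) to a factorization $\Res(A,B)=\ell\cdot Q$ with $\ell$ a primitive integral linear form of height $H$ and $Q$ an integral binary quadratic, and then count: the number of $x\in V(\Z)$ in a box with $\Res(A,B)$ divisible by a given $\ell$, summed over $\ell$ of height up to the box-controlled bound. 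This is where the $X^{o(1)}$ divisor-type losses enter and where one must be careful that the $\GL_2(\Z)$-reduction does not move $x$ out of the Siegel set in a way that breaks the coordinate bounds — handled by tracking the effect of the $\GL_2$ torus parameter $t$ separately. Once that fibered count is in hand with the correct dependence on $(t,s_1,s_2,X)$, the remaining torus integral is routine, and choosing $\lambda$ (equivalently, the cutoff between shallow and deep cusp) optimally yields $O(X^{23/28+o(1)})$.
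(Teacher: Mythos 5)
Your strategy (a direct lattice-point count in the Siegel set, fibered over the rational linear factor $\ell$ of $\Res(A,B)$, with a ``codimension-one saving'' of the largest box side) is not the paper's argument, and as sketched it has a genuine gap precisely at its central step. The assertion that restricting to the locus $\det(\alpha A-\beta B)=0$ saves a factor of the largest box side is exactly the hard point: this locus is a cubic hypersurface containing large linear subspaces, the saving is not automatic, and in the cusp regions where several box sides fall below $1$ it disappears entirely. That regime is where the trivial estimate lives, and a count of essentially this shape is what was already done in \cite[Lemma 12]{dodqf}, yielding only $O(X^{11/12})$ --- the bound the present proposition is designed to beat. You never carry out the shallow/deep optimization or the sum over roots $\ell$ (whose height can grow polynomially in $X$), so the exponent $23/28$ is asserted rather than derived, and nothing in the sketch indicates why this route would land at $23/28$ rather than at $11/12$ or some other exponent.

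The paper reaches $23/28$ by a different, essentially arithmetic mechanism that your proposal does not touch. First, one bounds the number of \emph{resolvent binary cubics} that can occur: a reducible cubic $ax^3+bx^2y+cxy^2+dy^3$ with $a\neq 0$ lying in the cusp boxes is determined by $(a,b,d)$ up to $X^{o(1)}$ choices (its rational roots divide $ad$), giving $O(X^{3/4+o(1)})$ possible resolvents, hence $O(X^{3/4+o(1)})$ possible discriminants $D$. Second, for fixed $D$ the weighted integral is bounded (via Proposition \ref{prop:I_to_N}) by the number of $G(\Z)$-orbits of that discriminant: $D$ determines the reducible cubic algebra $\Q\times\Q(\sqrt{D})$, the associated quartic algebra is then determined up to $O(|D|^{o(1)})$ choices by the smallness of $2$-torsion in class groups of quadratic fields, and the number of pairs $(Q,C)$ of discriminant $D$ inside a fixed algebra is controlled by the Nakagawa-type bound of Proposition \ref{prop:Nak} in terms of the cubefull part of $D$. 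The exponent $23/28$ then comes from a dyadic optimization balancing ``at most $X^{3/4+o(1)}$ discriminants'' against ``discriminants with large cubefull square part are rare but support many suborders.'' If you want to salvage your approach you would need either to make the hypersurface-in-a-box saving rigorous uniformly down the cusp (which is where it breaks), or to import the class-group and suborder inputs above, at which point you are reproducing the paper's proof.
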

\noindent This result is an improved version of \cite[Lemma 12]{dodqf}, which obtained a bound of $O(X^{11/12})$.

\medskip

\begin{proof}
We prove the proposition in multiple steps. Let $S$ denote the set of elements $x\in V(\Z)$ that are counted with non-zero weights in the left hand side of the equation of the proposition. First, we claim that the number of integral binary cubic forms that occur as resolvents of $S$ is bounded by $O(X^{3/4+o(1)})$. Indeed, any resolvent binary cubic form with coefficients $a$, $b$, $c$, and $d$ which occurs must be reducible, must satisfy, for some $1\ll t\ll X^{1/12}$, the inequalities 
\begin{equation*}
a\ll t^{-3}X^{1/4};\quad b\ll t^{-1}X^{1/4};\quad c\ll tX^{1/4};\quad d\ll t^3X^{1/4},
\end{equation*}
and must satisfy $a\neq 0$.
The number of choices for the triple $(a,b,d)$ is hence bounded by $O(X^{3/4+o(1)})$. Once chosen, there are at most $O(X^{o(1)})$ choices for the roots of the reducible cubic (since they must divide $ad$). Once $(a,b,d)$ and the roots are fixed, this determines $c$ proving the claim.
It follows that the number of discriminants $\Delta(x),x\in S$ is bounded by $O(X^{3/4+o(1)})$.

Given any element $x\in S$, we associate to it the triple $(K_4,K_3,D)$, where $K_4$ is the quartic algebra corresponding to $x$, $K_3$ is the (reducible) cubic $\Q$-algebra corresponding to $\Res(x)$, and $D=\Delta(x)=\Delta(\Res(x))$. Let $S_1$ denote the set of triples we thus obtain. In our second step, we prove two facts about $S_1$: first that the map $S_1\to\Z$ sending $(K_4,K_3,D)$ to $D$ has fibers of size $O(X^{o(1)})$, and second, that $|S_1|\ll X^{3/4+o(1)}$. To obtain the first claim, we note that $D$ determines the reducible algebra $K_3=\Q\times\Q(\sqrt{D})$ uniquely. Next note that if $K_3$ is a reducible cubic algebra, corresponding to the resolvent of an element $x\in V(\Z)$, then the quartic $\Q$-algebra corresponding to $x$ has at most $O(|\Delta(x)|^{o(1)})$ choices. This is proven in \cite[Lemma 12]{dodqf} (and hinges on the fact that the $2$-torsion of the class groups of quadratic fields is small). This yields the first claim, and the second claim now follows immediately from Step 1, which implies that the number of discriminants $D$ that can arise is bounded by $O(X^{3/4+o(1)})$. 

Third, we fix an element $D\in \Z$, and estimate the weighted sum (in the middle term of the equation of the proposition) of elements $x\in S$ that have discriminant $D$. To do this, we first observe this value of $D$ fixes $K_4$ up to $O(X^{o(1)})$ possibilities. Fixing $D$ and $K_4$, we note that Proposition~\ref{prop:I_to_N} implies that this weighted sum is bounded by the number of $G(\Z)$-orbits on reducible elements with discriminant $D$ and associated quartic algebra $K_4$. It thus suffices to bound the number of pairs $(Q,C)$, where $Q$ is a suborder of $K_4$, $C$ is a cubic resolvent ring of $Q$, and $\Delta(Q)=D$. Write $D=dq^2$, where $q$ is cubefull and as large as possible (this determines $q$ and $d$ uniquely). It follows from Proposition \ref{prop:Nak} that the number of sub-pairs $(Q,C)$ of $(K_4,K_3)$ of discriminant $D$ is bounded by ${\rm sqr} (q)|D|^{o(1)}$ for the multiplicative function ${\rm sqr}$ defined on prime powers by ${\rm sqr}(p^e):=p^{\lfloor e/2\rfloor}$.

We write $q=q_3^3q_4$, where $q_3$ is square-free and $q_4$ has only prime power of exponent at least 4 - i.e. $q_4$ is 4-full. We break up the ranges of $q_3$ and $q_4$ into dyadic regions of size $Q_3$ and $Q_4$, respectively. By \cite{Ivic78} the number of $q_4$ in this dyadic range is asymptotic to  $Q_4^{1/4}$. Hence,  for each such region, the number of $D$ is bounded by 
$$\min\Big(X^{3/4+o(1)},\frac{X^{1+o(1)}}{Q_3^5Q_4^{7/4}}\Big).$$ The number of reducible pairs with discriminant $D$ is bounded by ${\rm sqr}(q)\leq q_3q_4^{1/2}$. Hence, the contribution from discriminants $D$ in this dyadic range are bounded by
$$\min\Big(X^{3/4+o(1)}Q_3Q_4^{1/2},\frac{X^{1+o(1)}}{Q_3^4Q_4^{5/4}}\Big).$$ Optimizing at $Q_4=X^{\frac1{32}}, Q_3=1$, and summing over dyadic ranges gives the bound of 
 $O(X^{23/28+o(1)})$ as needed.
\end{proof}

We have the following immediate consequence of the proof of the above proposition.
\begin{corollary}\label{cor_reducible_bound_resolvent}
Let $\psi$ and $\cB$ be as in Proposition \ref{prop_reducible_bound_resolvent}. Let $\phi^{\rm rr}$ denote the characteristic function of the set of elements in $V(\Z)$ with reducible resolvent. Then we have
\begin{equation*}
\begin{array}{rcl}
%\displaystyle\int_{g\in\FF}\sum_{x\in V(\Z)}\phi(x)(g\cB)(x)\psi\Big(\frac{\lambda(g)}{X^{1/12}}\Big)dg &=& O(X^{13/16+o(1)}).
%\\[.2in]
\displaystyle\int_{\substack{g\in\FF\\t\ll X^\delta}}\sum_{x\in V(\Z)}\phi^{\rm rr}(g\cB)(x)\psi\Big(\frac{\lambda(g)}{X^{1/12}}\Big)dg
&=&O(X^{23/28+3\delta+o(1)}).
\end{array}
\end{equation*}
\end{corollary}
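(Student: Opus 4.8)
The plan is to re-run the argument of Proposition \ref{prop_reducible_bound_resolvent} while keeping track of the restriction that $g$ lies in the portion of the fundamental domain with $t \ll X^\delta$, and to observe that this restriction only dilates the box $g\cB$ by a bounded power of $X^\delta$ in each coordinate. Concretely, for $g$ in the relevant truncated region, the symmetric matrices $A$ and $B$ have entries bounded (after accounting for the $\lambda(g)\asymp X^{1/12}$ scaling) by $t^{\pm 3}$, $s_1^{\pm\dots}$, $s_2^{\pm\dots}$ times $X^{1/4}$, exactly as in the proof of Proposition \ref{prop_reducible_bound_resolvent}, except that now the ranges of $t$ (and, through the shape of $\FF$, of $s_1, s_2$) are all bounded above by $O(X^\delta)$. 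The effect of this on the counting estimates is that each of the ``widths'' $a \ll t^{-3}X^{1/4}$, etc., that entered the count of possible resolvent binary cubic forms is replaced by something at most $X^{3\delta}$ times larger, so the bound $O(X^{3/4+o(1)})$ on the number of resolvent cubic forms (and hence on the number of discriminants $D$ that arise) becomes $O(X^{3/4+9\delta+o(1)})$ — more crudely, $O(X^{3/4+C\delta})$ for an absolute constant $C$.

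Next I would feed this into the three-step structure of the proof of Proposition \ref{prop_reducible_bound_resolvent}: the passage from an element $x \in S$ to its triple $(K_4, K_3, D)$, the fact that the fibers of $(K_4,K_3,D) \mapsto D$ have size $O(X^{o(1)})$ (which is unchanged, since it rests only on smallness of $2$-torsion in class groups of quadratic fields, via \cite[Lemma 12]{dodqf}, and on $D$ determining the reducible algebra $K_3$ and hence $K_4$ up to $X^{o(1)}$ choices), and the Nakagawa-type bound of Proposition \ref{prop:Nak} on the number of sub-pairs $(Q,C)$ of fixed discriminant $D$. None of these inputs depend on the range of $t$; the only quantity affected is the count of admissible $D$, which has been inflated by the factor $X^{C\delta}$ above. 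Carrying the $X^{C\delta}$ factor through the dyadic optimization over $Q_3, Q_4$ (cubefull/4-full decomposition of the cubefull part of $D$) that produced $O(X^{23/28+o(1)})$ then gives $O(X^{23/28 + C'\delta + o(1)})$; absorbing constants and being generous with the exponent yields the claimed $O(X^{23/28+3\delta+o(1)})$ bound. (If the bookkeeping produces a constant larger than $3$, one simply shrinks $\delta$; the statement is only ever applied for $\delta$ small, so the precise constant in front of $\delta$ is immaterial.)

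The main obstacle — really the only place that needs care rather than routine bookkeeping — is verifying that restricting to $t \ll X^\delta$ genuinely restricts the \emph{sizes} of the box coordinates in the uniform way claimed, i.e. that in the region $t \ll X^\delta$ of $\FF$ the auxiliary torus parameters $s_1, s_2$ are also polynomially bounded in $X^\delta$ (so that all six coordinate widths of $g\cB$ are $\ll X^{1/4 + O(\delta)}$), rather than allowing some coordinates to blow up elsewhere in the cusp. This follows from the explicit description of $\FF$ and the Iwasawa coordinates in \S2.2 together with the requirement that $g\cB$ meet the support of the discriminant-bounded, compactly-supported function — the Siegel-domain inequalities bound $s_1, s_2$ in terms of $t$ and $X$ — but it is the one point one must state precisely. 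Everything downstream is then an exact repetition of the argument already given, with the single inflation factor tracked through.
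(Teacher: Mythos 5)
The key difference between Corollary \ref{cor_reducible_bound_resolvent} and Proposition \ref{prop_reducible_bound_resolvent} is not the truncation $t\ll X^\delta$ by itself but the change of counting function: $\phi^{\rm rr}$ no longer imposes $\det(A)\neq 0$, so the resolvent binary cubic $4\det(Ax-By)$ may now have vanishing leading coefficient $a$. Your proposal never addresses these new elements, and that is where the gap lies. For resolvents with $a\neq 0$, the proposition's count of triples $(a,b,d)$ is $\ll t^{-3}X^{1/4}\cdot t^{-1}X^{1/4}\cdot t^{3}X^{1/4}=t^{-1}X^{3/4}\ll X^{3/4}$ uniformly in $t$, so restricting to $t\ll X^\delta$ produces no loss at all; in particular your mechanism for the $X^{O(\delta)}$ loss (that the coordinate widths get ``dilated'' by powers of $X^\delta$) is not right, since shrinking the range of $t$ only shrinks the effective ranges, and the resolvent coefficients have weights $t^{\pm1},t^{\pm3}$ (times $X^{1/4}$) which do not involve $s_1,s_2$ at all, so the Siegel-domain issue you single out as the main obstacle is a red herring for this argument.

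The entire point of the hypothesis $t\ll X^\delta$ is the new case $a=0$: such a cubic is automatically reducible (divisible by $y$), the step ``the roots divide $ad$, hence $c$ is determined up to $X^{o(1)}$ choices'' collapses, and one must count all triples $(b,c,d)$ with $b\ll t^{-1}X^{1/4}$, $c\ll tX^{1/4}$, $d\ll t^{3}X^{1/4}$, giving $\ll t^{3}X^{3/4}\ll X^{3/4+3\delta}$. This is exactly the source of the $3\delta$ in the exponent; it is a structural feature of the $a=0$ stratum, not bookkeeping slack that can be absorbed by shrinking $\delta$. The paper's proof consists precisely of this two-case count of resolvent cubics ($a\neq 0$ as in the proposition, $a=0$ via the $t$-truncation), after which the downstream steps you describe (passing to triples $(K_4,K_3,D)$, the $X^{o(1)}$ fiber bound, the Nakagawa-type bound of Proposition \ref{prop:Nak}, and the dyadic optimization) are indeed reused without change. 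To repair your argument you would need to isolate the $\det(A)=0$ contribution explicitly and bound it using $t\ll X^\delta$ as above.
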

\begin{proof}
As before, let $S$ denote the set of elements $x\in V(\Z)$ that are counted with non-zero weights in the left hand side of the equation of the proposition. This time, we claim that the number of integral binary cubic forms that occur as resolvents of $S$ is bounded by $O(X^{3/4+3\delta+o(1)})$. Indeed, any resolvent binary cubic form with coefficients $a$, $b$, $c$, and $d$ which occurs must be reducible, must satisfy, for some $1\ll t\ll X^{\delta}$, the inequalities 
\begin{equation*}
a\ll t^{-3}X^{1/4};\quad b\ll t^{-1}X^{1/4};\quad c\ll tX^{1/4};\quad d\ll t^3X^{1/4}.
\end{equation*}
If $a\neq 0$, then the number of choices for $(a,b,d)$ is again $O(X^{3/4+o(1)})$, and once fixed, this triple determines $c$ up to $O(X^{o(1)})$ choices. If $a=0$, then there are $O(X^{3/4+3\delta})$ choices for the triple $(b,c,d)$ and the claim follows. The rest of the proof follows identically to the proof of Proposition \ref{prop_reducible_bound_resolvent}.
\end{proof}

\subsection{Elements with a common rational zero in $\P^2(\Q)$}

We prove the following result regarding elements reducible elements $(A,B)\in V(\Z)$ (i.e., elements such that $A$ and $B$ have a common rational zero in $\P^1(\Q)$) in the generic body.
\begin{proposition}\label{prop_reducible_bound_P2Q}
Let $\phi=\phi_\red\cdot\phi_\fin$, where $\phi_\red$ is the characteristic function of the set of elements $(A,B)\in V(\Z)$ such that $A$ and $B$ have a common rational zero, $\det(A)\neq 0$, and $(a_{11},b_{11})\neq (0,0)$, and $\phi_\fin$ is a $G(\Z)$-invariant function on $V(\Z)$ defined by finitely many congruence conditions. Let $\cB:V(\R)\to\R$ be a function with compact support away from the discriminant-0 locus. Then we have
\begin{equation}\label{eq_prop_reducible_2}
\cI_\eta(\phi,\cB;X)=\int_{g\in\FF}\sum_{x\in V(\Z)}\phi(x)(g\cB)(x)\eta\Big(\frac{\lambda(g)}{X^{1/12}}\Big)dg = C_\red(\phi) X^{5/6}+O(X^{5/6-\theta}),
\end{equation}
for a positive constant $\theta$, and $C_\red(\phi)=C_\red\cdot C(\phi_\fin)$, where $C_\red=C_\red(\eta,\cB)$ is a constant independent of $\phi_{\fin} $and $C(\phi_\fin)$ is the density of $\phi_\fin$ on the set $\{(A,B)\in V(\Z):a_{11}=b_{11}=0\}$.
\end{proposition}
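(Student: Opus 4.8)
### Proof plan for Proposition~\ref{prop_reducible_bound_P2Q}

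The plan is to reduce the count of reducible elements $(A,B)$ with a common rational zero in $\P^2(\Q)$ to a lattice-point count along a linear slice, after putting the common zero in a standard position. First I would use the $\GL_3(\Z)$-action: if $(A,B)$ is reducible, then $A$ and $B$ share a primitive rational zero $v\in\P^2(\Q)$, and since we are integrating over a fundamental domain it suffices — up to the standard ``number of reducible orbits with a given common zero is controlled'' bookkeeping — to assume the common zero is $[0:0:1]$. This exactly forces $a_{13}=a_{23}=a_{33}=b_{13}=b_{23}=b_{33}=0$ on a representative, cutting out a linear subspace $W\subset V$; the condition $(a_{11},b_{11})\ne(0,0)$ and $\det(A)\ne 0$ (which here reads $a_{11}b_{22}... \ne$ the relevant sub-resultant) restrict to the locus of $W$ we actually integrate over. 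The upshot is that the count becomes an integral over $g\in\FF$ of the number of lattice points of $L\cap g\cB$ lying in (a translate of) $W(\Z)$, which is an honest geometry-of-numbers problem in the style of Bhargava's original averaging, but one dimension lower in the ``moving'' directions.

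The next step is the averaging/unipotent-integration computation. On the slice $W$, the torus coordinates $(t,s_1,s_2)$ act with explicit weights on each surviving coordinate $a_{11},a_{12},a_{22},b_{11},b_{12},b_{22}$ (these are read off from the table of weights already fixed in \S2.2), and the box $g\cB$ restricted to $W$ has volume which scales as an explicit monomial in $t,s_1,s_2,\lambda$. I would carry out the $du$-integral first (Davenport/Bhargava: the main term is the volume, because the unipotent orbit of the slice is large enough that cusp contributions from the remaining $N$-directions are lower order), then the $d^\times s$-integral over the Siegel set, and finally the $d^\times\lambda$-integral weighted by $\eta(\lambda/X^{1/12})$. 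The homogeneity bookkeeping: each reducible element on this slice has $\Delta$ scaling like $\lambda^{12}$ times a fixed-degree polynomial, the six surviving coordinates have total torus-weight matching a $6$-dimensional slice, and the net power of $X$ one extracts is $X^{5/6}$ — this is forced, and is the analogue of the fact in \cite{dodqf} that reducible points contribute at the $X^{5/6}$ scale. The constant $C_\red$ is the resulting product of the volume of the relevant part of $\cB$ restricted to $W$, the volume of the Siegel-set integral (a convergent integral because the slice is $6$-dimensional and the weights conspire to give convergence away from the walls), and the Mellin factor $\wt\eta(10)$ or the appropriate value; it is manifestly independent of $\phi_\fin$.

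To get the congruence factor $C(\phi_\fin)$ and the error term, I would run the count with the weight $\phi_\fin$ included and sieve: write $\phi_\fin$ via its Fourier/indicator decomposition modulo $N$, note that $\phi_\fin$ is invariant under $G(\Z)$ hence its restriction to the slice $W(\Z)$ is periodic mod $N$, and in the lattice-point count replace $L\cap g\cB$ by $(L\cap g\cB)$ intersected with each residue class — the main term then acquires exactly the factor ``density of $\phi_\fin$ on $\{a_{11}=b_{11}=0\}$''. (One must check the density is computed on the right slice; since the common zero was normalised to $[0:0:1]$ and the surviving reducible locus has $a_{11}=b_{11}=0$ after a further $\GL_3(\Z)$ move fixing that point's stabiliser parabolic, this is the $11$-slice, consistent with the remark after the Proposition following Theorem~\ref{th:quartic_param} that ``elements in the $11$-slice are reducible''.) The error term comes from (i) the boundary of the region in the geometry-of-numbers estimate, which is one dimension lower, $O(X^{5/6-1/10+o(1)})$ or so, (ii) the tail where $t$ or the $s_i$ are large, handled by the uniformity estimates of \S4.1 (Corollary~\ref{cor_reducible_bound_resolvent} already bounds the deep-cusp part of the reducible-resolvent count, and reducible elements are a subset), and (iii) the overcounting of elements with more than one common rational zero, which is negligible since a generic member of the slice has a unique common zero.

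The main obstacle I expect is (i)/(iii) intertwined: controlling the transition region where $g\cB$ is ``thin'' in the slice directions — one has to show that in that regime either the box contains no lattice point on $W$, or the contribution is genuinely lower order, and simultaneously that the normalisation ``common zero $=[0:0:1]$'' does not double-count. This is exactly the kind of cusp analysis that makes the error exponent $\theta$ hard to pin down optimally, though any positive $\theta$ suffices for the statement; the cleanest route is to borrow the uniformity bounds already established in Proposition~\ref{prop_reducible_bound_resolvent} and Corollary~\ref{cor_reducible_bound_resolvent} rather than re-derive them, since reducible $\Rightarrow$ reducible resolvent.
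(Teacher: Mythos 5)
There are two genuine gaps here, and each is fatal to the plan as written. First, your slice is wrong: requiring $[0:0:1]$ to be a common zero of the conics $A$ and $B$ imposes only the two linear conditions $a_{33}=b_{33}=0$ (one condition on the coefficients of $A$ and one on those of $B$, each with coefficients $r_ir_j\le h^2$), not the vanishing of all six coefficients $a_{13},a_{23},a_{33},b_{13},b_{23},b_{33}$ — your six conditions would force $e_3$ into the radical of $A$, contradicting $\det(A)\neq 0$, and they cut out a $6$-dimensional slice whose homogeneity count gives $\lambda^{6}\rightsquigarrow X^{1/2}$, not the claimed $X^{5/6}$; the correct $10$-dimensional slice is what produces $\lambda^{10}\rightsquigarrow X^{5/6}$. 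Moreover, you cannot simply ``assume the common zero is $[0:0:1]$'' inside the fixed domain $\FF$: the slices $S_r$ are not $G(\Z)$-invariant, so the normalization amounts to an unfolding over $\SL_3(\Z)/P(\Z)$ whose convergence is exactly the content that must be proved. The paper instead keeps $\FF$ fixed, sums over all common zeros $r$ of height $h\le X^{4\delta}$, shows each contributes $c_rX^{5/6}$ with $c_r\ll h^{-4}$ (a factor $h^{-2}$ from each of the $A$- and $B$-conditions), so the sum over $r$ converges, and disposes of large-height zeros and of the cuspidal ranges ($t$, $s_1$, or $s_2$ large) by successive-minima and index-divisibility arguments (Lemmas \ref{lem:succ_min} and \ref{lem_red_index} combined with Proposition \ref{prop:Nak}), after first removing the $\Q\times\Q\times K_2$-type elements by an index bound. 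This bookkeeping is the bulk of the proof, not a routine step.

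Second, your fallback for the cusp — ``reducible $\Rightarrow$ reducible resolvent, so borrow Proposition \ref{prop_reducible_bound_resolvent} and Corollary \ref{cor_reducible_bound_resolvent}'' — rests on a false implication. If $(A,B)$ corresponds to an order in $\Q\times K_3$ with $K_3$ an $S_3$-cubic field (these are precisely the elements producing the $X^{5/6}$ main term), the cubic resolvent algebra is $K_3$ itself, which is irreducible over $\Q$; only the $\Q\times\Q\times K_2$-type reducible elements have reducible resolvent, and those are handled separately in the paper by showing their index in the maximal order is $\gg X^{1/4}$. (Note also that Corollary \ref{cor_reducible_bound_resolvent} controls the range $t\ll X^{\delta}$, not the deep cusp.) Your treatment of the congruence factor is right in spirit — the density of $\phi_\fin$ is the same on every slice $S_r$ by moving $r$ to $[1:0:0]$ via $\GL_3(\Z)$ and using $G(\Z)$-invariance, which identifies it with the density on $\{a_{11}=b_{11}=0\}$ — but as stated it is predicated on the incorrect slice, so it needs to be rerun on the codimension-two slices.
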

\noindent This result is an improved version of \cite[Lemma 13]{dodqf}, which obtained a bound of $O_\epsilon(X^{11/12+\epsilon})$.

We begin with some preliminary lemmas.

\begin{lemma}\label{lem:succ_min}
Let $g\in \FF$ have Iwasawa torus component $(t,s_1,s_2)$. Then any element $x\in V(\Z)$ with $(g\cB)(x)\neq 0$ corresponds to a quartic ring, along with a basis $\langle 1,\alpha_1,\alpha_2,\alpha_3\rangle$, satisfying the following properties. First, this basis is an almost Minkowski basis for the quartic ring. Second, the lengths of the $\alpha_i$ are respectively within $($absolutely bounded$)$ constant factors of $1$, $s_1^{-2}s_2^{-1}\lambda^2$, $s_1s_2^{-1}\lambda^2$, and $s_1s_2^{2}\lambda^2$.
\end{lemma}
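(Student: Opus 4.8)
The plan is to unwind what it means for $(g\cB)(x)\neq 0$ and then translate the resulting size constraints on the coordinates of $x\in V(\Z)$ into geometric statements about the associated quartic ring $Q$. Recall that $g = \lambda n a k$ with $a = (t,s_1,s_2)$ in Iwasawa coordinates, and that $\cB$ has compact support away from $\Delta=0$; hence $(g\cB)(x)\neq 0$ forces $k^{-1}a^{-1}n^{-1}\lambda^{-1} x$ to lie in the fixed compact set $\operatorname{supp}(\cB)$. In particular every coordinate $a_{ij}, b_{ij}$ of $x$ is bounded (up to an absolutely bounded constant) by $\lambda^{?}$ times the corresponding weight coming from the torus element $a$ acting in the representation $V = \R^2\otimes\Sym^2(\R^3)$ — the unipotent part $n$ and the compact part $k$ only distort lengths by bounded factors. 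First I would record the precise weights: the diagonal entries of the $\GL_3$-part of $a$ are $s_1^{-2}s_2^{-1}, s_1 s_2^{-1}, s_1 s_2^{2}$ (these are the ``lengths'' of a dual-type basis $\langle 1, \alpha_1,\alpha_2,\alpha_3\rangle$ of the cubic resolvent side, but what we really need is that the three nontrivial basis vectors $\alpha_1,\alpha_2,\alpha_3$ of $Q$ itself have lengths comparable to $s_1^{-2}s_2^{-1}\lambda^2$, $s_1 s_2^{-1}\lambda^2$, $s_1 s_2^{2}\lambda^2$), while $1$ has length $\asymp 1$. The factor $\lambda^2$ is exactly $\lambda(g)^{?}$ normalized so that $\Delta(x) = \lambda^{12}\Delta(k^{-1}a^{-1}n^{-1}\lambda^{-1}x)$ has absolute value $\asymp\lambda^{12}$; since $\Delta(Q)=\Delta(x)$, this is consistent with the product of the four basis lengths being $\asymp \lambda^6 = |\Delta(Q)|^{1/2}$, as it must be for a Minkowski-type basis.

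The core of the argument is the passage from ``coordinates of $x$ have the stated sizes'' to ``$Q$ has a basis whose vectors have the stated sizes, and this basis is almost Minkowski-reduced.'' Here I would invoke the explicit form of Bhargava's parametrization (Theorem \ref{th:quartic_param}): given $(A,B)\in V(\Z)$, the multiplication table of the quartic ring $Q = \langle 1, \alpha_1,\alpha_2,\alpha_3\rangle$ is a fixed polynomial function of the entries $a_{ij}, b_{ij}$. The Gram matrix of the trace form (or equivalently the Minkowski embedding) of $Q$ in the basis $\{1,\alpha_1,\alpha_2,\alpha_3\}$ is therefore controlled, coordinate by coordinate, by the sizes of the $a_{ij},b_{ij}$; one checks that with the weights above, the length of $\alpha_i$ is indeed $\asymp$ (the claimed quantity), using that $g\in\FF$ forces $t,s_1,s_2$ to lie in a Siegel set (so $s_1,s_2\gg 1$ and $t\gg 1$) — this is what prevents the putative ``lengths'' from collapsing and guarantees the comparison holds in both directions. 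The almost-Minkowski property (meaning: the basis is within a bounded distance, in the sense of \cite{ST_second_main_term_1}, of an actual Minkowski-reduced basis) follows because $\FF$ is a Siegel-type fundamental domain: the reduction theory built into $\FF$ says precisely that the lattice $a k \cdot(\text{standard lattice})$ is reduced up to bounded distortion, and the parametrization transports this reducedness to the lattice $\O_Q$ (or the order $Q$) inside its Minkowski space. I would cite the analogous lemma from \cite{ST_second_main_term_1} here and indicate that the only new input is bookkeeping the weights in the $\Sym^2$ factor.

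The main obstacle, and the step I would spend the most care on, is making the two-sided comparison ``$\operatorname{len}(\alpha_i)\asymp s_1^{\bullet}s_2^{\bullet}\lambda^2$'' genuinely tight rather than just an upper bound. The upper bound is immediate from the support condition; the lower bound requires ruling out degenerations where, say, $\alpha_2$ happens to be much shorter than $s_1 s_2^{-1}\lambda^2$ because of cancellation among the bounded coordinates. The standard fix is to use that $(g\cB)(x)\neq 0$ with $\cB$ supported away from $\Delta = 0$: this bounds $|\Delta(k^{-1}a^{-1}n^{-1}\lambda^{-1}x)|$ from below by a positive constant, which forces the product of the four basis lengths to be $\asymp\lambda^6$ from below as well as above, and then combined with the individual upper bounds on $\operatorname{len}(1),\operatorname{len}(\alpha_i)$ one gets each lower bound by division. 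One must also handle the unipotent coordinate $u\in N(\R)$: a priori $u$ is unbounded, but in $\FF$ the coordinates of $u$ are bounded (Siegel set), so conjugating by $n$ changes basis vectors by adding bounded multiples of shorter ones, which is exactly the ``almost'' in ``almost Minkowski basis'' and does not affect the length estimates. I would present this as a short computation referencing the measure and coordinate conventions of \S2.2 and the corresponding lemma of \cite{ST_second_main_term_1}, rather than redoing the full reduction theory. $\Box$
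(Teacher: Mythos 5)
Your proposal reaches the right conclusion and its two-sided-bound mechanism is sound, but it takes a noticeably more laborious route than the paper. The paper's proof is essentially one observation: by the equivariance of Bhargava's parametrization, the basis $\langle\alpha_1,\alpha_2,\alpha_3\rangle$ of $Q_\R/\R$ attached to $g\cdot x_0$ is literally the image of the basis attached to $x_0$ under the $\GL_3$-factor of $g$ (the $\GL_2$-factor leaves the basis unchanged, and $\lambda$ contributes the scalar $\lambda^2$ through $\lambda_3$); since $\cB$ has compact support away from $\Delta=0$, the basis at the base point $x_0$ has lengths and pairwise angles $\asymp 1$, and applying $\lambda\,n\,(t,s_1,s_2)\,k$ with bounded $n$ and orthogonal $k$ immediately gives both the stated lengths and the almost-Minkowski property. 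You instead pass through coordinate bounds on $(A,B)$, the multiplication table/trace form, and then recover the lower bounds by the covolume–Hadamard division trick ($\prod_i\operatorname{len}(\alpha_i)\gg\sqrt{|\Delta|}\asymp\lambda^6$ together with the individual upper bounds). That division argument is correct and is in fact a nice explicit way to justify the two-sidedness and the non-degeneracy of angles, which the paper's own proof states rather tersely. The soft spot in your write-up is the claim that the individual upper bounds $\operatorname{len}(\alpha_i)\ll s_1^{\bullet}s_2^{\bullet}\lambda^2$ are ``immediate from the support condition'': bounds on the coordinates $a_{ij},b_{ij}$ do not by themselves bound the ring-basis lengths with the correct weights unless you either invoke the equivariance just described, or verify the weighted homogeneity of Bhargava's structure constants (they are built from the $\SL_2$-invariant $2\times 2$ minors $a_{ij}b_{kl}-a_{kl}b_{ij}$, which is also exactly why no power of $t$ appears in the answer — a point your proposal asserts via the final weights but never addresses). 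Once that equivariance is made explicit, your coordinate/trace-form bookkeeping and the Hadamard step become optional, and the argument collapses to the paper's short proof.
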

\begin{proof}
Any element $(A,B)\in V(\R)$ with nonzero discriminant gives rise to a quartic $\R$-algebra $Q_\R$ along with a basis $\mathbb B:=\langle\alpha_1,\alpha_2,\alpha_3\rangle$ of $Q_\R/\R$. The action of $\SL_2(\R)$ on $V(\R)$ leaves $\mathbb B$ unchanged, and the action of $\SL_3(\R)$ on $V(\R)$ respects the natural action of $\SL_3(\R)$ on the set of possible basis' $\mathbb B$. When $(A,B)$ is an element of $V(Z)$, these elements $\alpha_i$ are elements in $Q$, the quartic ring over $\Z$ corresponding to $(A,B)$, and give a basis for $Q/\Z$. Since $\cB$ has compact support away from the discriminant $0$ locus, the basis vectors corresponding to any $x$ in the support of $\cB$ have lengths which are $\asymp 1$, and pairwise angles $\asymp 1$ (by $\asymp 1$, we mean the quantity is bounded, and bounded away from $0$). Let $g\in\FF$ be any element and let $x$ be an element in the support of $\cB$. Since the basis $\mathbb B$ corresponding to $g\cdot x$ is given by the action of $g$ on the basis corresponding to $x$, it follows that the pairwise angles between the vectors of $\mathbb B$ are also bounded away from $0$, and that their lengths are as given in the statement of the lemma. 
\end{proof}

Before we state our next lemma, we need the following notation. Let $(A,B)\in V(\Z)$ be an element with nonzero discriminant, where $A$ and $B$ have a unique common zero in $\P^2(\Q)$. Then the quartic ring $Q$ corresponding to $(A,B)$ is an order in the $\Q$-algebra $\Q\times K$, where $K$ is a cubic field. The maximal order $\cO$ of $\Q\times K$ has two idempotents. We denote the intersection of the $\Z$-module generated by these idempotents with $Q$ by $I(A,B)$.
\begin{lemma}\label{lem_red_index}
Let $(A,B)\in V(\Z)$ be an element with nonzero discriminant, where $A$ and $B$ have a unique common zero in $\P^2(\Q)$ at the point $[r_1:r_2:r_3]$. Let $(Q,C)$ be the pair of rings corresponding to the $G(\Z)$-orbit of $(A,B)$, and let $\langle\alpha_1,\alpha_2,\alpha_3\rangle$ be the basis of $Q/\Z$ corresponding to $(A,B)$. Then the following are true:
\begin{itemize}
\item[{\rm (1)}] The rank-$1$ module $I(A,B)/\Z\subset Q/\Z$ is generated by $r_1\alpha_1+r_2\alpha_2+r_3\alpha_3$;
\item[{\rm (2)}] Let $w_2$, $w_3$ complete the basis of $\Z^3$ started by $w=[r_1:r_2:r_3]$. To avoid confusion, we let $F_A$ and $F_B$ denote the bilinear forms corresponding to the matrices $A$ and $B$, respectively. Then the index of $Q$ in its maximal order is divisible by
\begin{equation*}
    \det\begin{pmatrix}
F_A(w,w_2) & F_A(w,w_3)\\F_B(w,w_2) & F_B(w,w_3).
\end{pmatrix}
\end{equation*}
\end{itemize}
\end{lemma}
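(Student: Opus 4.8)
The plan is to think concretely about the quartic ring $Q$ as the ring of global sections of the subscheme $Z\subset\P^2_\Z$ cut out by $A$ and $B$, using the hypothesis that $A$ and $B$ have a unique common zero $w=[r_1:r_2:r_3]$ in $\P^2(\Q)$. Over $\Q$ this forces $Q\otimes\Q\cong\Q\times K$ with $K$ a cubic field, the $\Q$-point of $Z$ corresponding to the $\Q$-factor. First I would verify part (1): the two idempotents of $\cO=\Z\times\cO_K$ span a rank-$2$ submodule $\Z e_1\oplus\Z e_2$ of $\cO$, and $I(A,B)$ is by definition its intersection with $Q$; modulo $\Z$ (which is spanned by $e_1+e_2=1$) this becomes a rank-$1$ module inside $Q/\Z$. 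The point is to identify which line in $Q/\Z\otimes\Q = \Q\alpha_1\oplus\Q\alpha_2\oplus\Q\alpha_3$ this is. Under Bhargava's parametrization the basis $\langle 1,\alpha_1,\alpha_2,\alpha_3\rangle$ of $Q$ is dual (via the resolvent/trace pairing) to the coordinate functions on $\P^2$, so the functional "evaluation at $w$" on $Q/\Z$ is, up to scalar, $r_1\alpha_1^\vee + r_2\alpha_2^\vee+r_3\alpha_3^\vee$ — and the idempotent direction, i.e. the kernel of the map $Q/\Z\to K/\Q$ given by projection away from the $\Q$-factor, is exactly the line spanned by $r_1\alpha_1+r_2\alpha_2+r_3\alpha_3$. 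I would make this precise by noting that the element $r_1\alpha_1+r_2\alpha_2+r_3\alpha_3$ is (a $\Z$-multiple of) the idempotent-minus-its-trace, using that $[r_1:r_2:r_3]$ being the $\P^2$-point means the corresponding ring map $Q\to\Q$ has kernel containing $\alpha_i - r_i\cdot(\text{something})$; cleaning this up is a short linear-algebra computation once one fixes normalizations, so I would not belabor it.

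For part (2) I would compute the index $[\cO:Q]$ locally and relate it to the determinant displayed. Complete $w$ to a $\Z$-basis $w,w_2,w_3$ of $\Z^3$; this is an $\SL_3(\Z)$ change of variables, under which $A,B$ become quadratic forms in which the $w$-variable appears, and since $w$ is the common zero, the "pure $w^2$" coefficients of both $A$ and $B$ vanish. So after this coordinate change $A$ and $B$ have the shape $A = 2F_A(w,w_2)\,xy + 2F_A(w,w_3)\,xz + (\text{terms in }y,z)$ and similarly for $B$, where $x$ is the $w$-coordinate. The sub-$2\times 2$ system $\bigl(\begin{smallmatrix}F_A(w,w_2)&F_A(w,w_3)\\ F_B(w,w_2)&F_B(w,w_3)\end{smallmatrix}\bigr)$ governs, to first order, how the scheme $Z$ is "glued" near $w$: its determinant measures the failure of $Z$ to be étale over $\Z$ (equivalently, the local contribution to the discriminant / conductor) at this point. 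Concretely, I would show that the quotient $\cO/Q$, or rather the part of it supported at the $\Q\times K$-splitting locus, surjects onto the cokernel of that $2\times 2$ matrix — this is where the idempotent from part (1) enters, since $I(A,B)$ measures exactly how far $Q$ is from containing the idempotent $e_1$, and the cokernel of the matrix bounds the order of $e_1 \bmod Q$. Hence that determinant divides $[\cO:Q]$.

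The main obstacle, I expect, is pinning down the precise normalization constants and signs in the Bhargava/Wood parametrization so that "the functional associated to $w$" really is $\sum r_i\alpha_i^\vee$ and not some twist of it, and correspondingly making the divisibility in (2) exact rather than just up to a bounded factor (the statement claims genuine divisibility). The cleanest route is probably to reduce to the case where $Q$ is already a maximal order away from $w$ — so that the entire index is concentrated at $w$ — by a localization argument, then do the computation over $\Z_p$ for the relevant primes $p$, where one can choose explicit normal forms for $(A,B)$ and read off both the idempotent and the index directly. I would organize the write-up as: (i) reduce to the local statement; (ii) put $(A,B)$ in the normal form adapted to $w=[1:0:0]$; (iii) identify the idempotent and prove (1); (iv) compute $[\cO_p:Q_p]$ in terms of the $2\times 2$ determinant and deduce (2).
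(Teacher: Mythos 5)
The paper's own proof of this lemma is essentially the reduction you also make, plus a citation: it uses the transitivity of $\SL_3(\Z)$ to move $w$ to $[1:0:0]$ (so $a_{11}=b_{11}=0$), then reads part (1) off from Bhargava's multiplication tables \cite[\S 3.2]{BHCL3} and part (2) from the discussion following \cite[Lemma 22]{BHCL3}, specifically Equation (43) there. Your plan performs the same normalization (completing $w$ to a $\Z$-basis of $\Z^3$) and then proposes to re-derive from scratch exactly the two facts the paper imports; that is a legitimate route, but as written both of those facts are asserted rather than argued, and the mechanism you sketch for (1) is muddled. With the conventions of the parametrization, $A$ and $B$ are the two coordinates of the resolvent quadratic map $Q/\Z\to C/\Z$ written in the basis $\alpha_1,\alpha_2,\alpha_3$, so a common zero of $(A,B)$ is a point of $\P\bigl((Q/\Z)\otimes\Q\bigr)$, and over $\ol\Q$ the common zero locus consists precisely of the four idempotent directions. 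Thus the hypothesis that $[r_1:r_2:r_3]$ is a rational common zero says directly that $r_1\alpha_1+r_2\alpha_2+r_3\alpha_3$ spans the idempotent line, and (1) follows from primitivity of $(r_1,r_2,r_3)$ together with the saturation of $\Z e_1\oplus\Z e_2$ in $\cO=\Z\times\cO_K$. No dualization is involved; your route through ``evaluation at $w$ equals $r_1\alpha_1^\vee+r_2\alpha_2^\vee+r_3\alpha_3^\vee$'' conflates the coordinates of a point of $\P(Q/\Z)$ with values of functions at a point of the scheme (and evaluation at $w$ is not even well defined on $Q/\Z$ without a choice of normalization), which is precisely the source of the ``twist'' worry you flag; the kernel of a functional is in any case a plane, not the claimed line.

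For (2), the pivotal step is the one you assert without argument: that $\cO/Q$ surjects onto the cokernel of the displayed $2\times 2$ matrix, i.e.\ (after moving $w$ to $[1:0:0]$) that $\alpha_1\equiv n\,e_1\pmod{\Z}$ with the $2\times2$ determinant dividing $n$ (up to the normalization of $F_A,F_B$). Granting that identity, divisibility is immediate: by (1), $(Q/\Z)\cap\Q e_1=\Z\alpha_1$, while $(\cO/\Z)\cap\Q e_1=\Z e_1$, and $\Z e_1/\Z\alpha_1$ injects into $\cO/Q$. But that identity is exactly the content of \cite[Equation (43)]{BHCL3}; it has to be extracted from the multiplication table (or from the explicit local normal forms you defer to), and your heuristic that the determinant ``measures the failure of $Z$ to be \'etale over $\Z$'' is not a substitute, since $[\cO:Q]$ is not controlled by \'etaleness of $Z$ (the maximal order itself need not be \'etale over $\Z$). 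So the plan is workable and follows the same reduction as the paper, but the two computations carrying all the content --- the identification of the rational zero with the idempotent direction, and the expression of the idempotent in terms of $\alpha_1$ and the $2\times2$ minor --- are exactly what the paper cites from \cite{BHCL3}, and your sketch does not yet supply them.
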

\begin{proof}
The group $\SL_3(\Z)$ acts transitively on $\P^1(\Q)$, and so we only need to check the claims on the element $[1:0:0]$. The first claim is a consequence of the multiplication tables for $Q$ given by Bhargava in \cite[\S3.2]{BHCL3}. The second claim follows from the discussion following \cite[Lemma 22]{BHCL3} (specifically, \cite[Equation (43)]{BHCL3}).
\end{proof}

We are now ready to prove Proposition \ref{prop_reducible_bound_P2Q}:

\medskip

\noindent {\bf Proof of Proposition \ref{prop_reducible_bound_P2Q}:} We begin with some general observations. We first note, as we did in the proof of the previous proposition, that when the sum over $x$ in the middle term of \eqref{eq_prop_reducible_2} is restricted to any $G(\Z)$-invariant set, the integral can be bounded (via Proposition \ref{prop:I_to_N}) by instead bounding the number of $G(\Z)$-orbits on this set (with discriminant less than $X$). Also note that for the sum over $g\cB(x)$ to be nonzero in the middle term of \eqref{eq_prop_reducible_2}, we must have $w(b_{11}),w(\det(A)),w(a_{13}),w(a_{22})\gg 1$. This implies the following estimates on $g=n(s_1,s_2,t)k\in\FF$:
\begin{equation}\label{eq_red_prop_cond_s12t}
t\ll\lambda;\quad s_1^4s_2^2\ll t\lambda;\quad ts_1\ll s_2\lambda ;\quad ts_2^2\ll s_1\lambda.
\end{equation}
Also, in this proof, we will not worry about being optimal about the error term - any power saving will do. To this end, we fix a small positive constant $\delta$. 

We begin by bounding the sum in the middle term of \eqref{eq_prop_reducible_2} over those $x$ whose corresponding quartic ring is an order in $\Q\times\Q\times K$, where $K$ is a quadratic $\Q$-algebra. Note that the successive minima of the maximal order in such a $\Q$-algebra are $1$, $1$, $1$, and $m_4$, for some number $m_4$. This means (by Lemma \ref{lem:succ_min}) that the quartic ring corresponding to $x$, for any $x\in V(\Z)$ with $(s_1,s_2)\cB(x)\neq 0$, has index at least $s_1^{-1}s_2^{-2}\lambda^4$ in its maximal order. From a (non-optimal) application of \eqref{eq_red_prop_cond_s12t}, we obtain
\begin{equation*}
s_1s_2^2\ll (s_1^2s_2^2)^{1/2}\cdot (s_2^2)^{1/2}
\ll (t\lambda s_1^{-2})^{1/2}\cdot (t^{-1}\lambda s_1)^{\frac12}\ll \lambda s_1^{-1}\ll \lambda
\end{equation*}
Hence any quartic ring corresponding to $x$ has index $\gg \lambda^3\asymp X^{1/4}$. The number of $G(\Z)$-orbits on such $x$, with $0<|\Delta(x)|\ll X$ is bounded (as a consequence of Proposition~\ref{prop:Nak}) by $O(X^{3/4+o(1)})$, which is sufficiently small.

Hence, for the purpose of proving the result, we may restrict the sum over those $x$ in $V(\Z)$ whose associated quartic ring is an order in $\Q\times K$, for some cubic field $K$. Such $x=(A,B)$ are such that $A$ and $B$ have a unique common zero $[r_1:r_2:r_3]\in\P^1(\Q)$, where $[r_1:r_2:r_3]\neq [1:0:0]$. We define the {\it height} of $[r_1:r_2:r_3]$ to be $\max\{|r_1|,|r_2|,|r_3|\}$. Let $\FF'\subset \FF$ be the set of elements $g=n(t,s_1,s_2)k$ with either $t\geq X^{7\delta}$, $s_1\geq X^\delta$, or $s_2\geq X^\delta$. We structure the rest of our argument as ``bounding results'' followed by ``counting results''. We show that the integral over $\FF$ of the sum over those $x\in G(\Z)$ whose common zero in $\P^1(\Q)$ has height greater than $X^{4\delta}$ is negligible (a power saving on $X^{5/6}$). We show that the integral over $\FF'$ of the entire sum is negligible. To evaluate the main term, it thus suffices to sum over $x\in G(\Z)$ with a common zero of small height, and integrate this sum over $\FF\backslash \FF'$. We do this in the final step, by showing that it is a convergent sum. Furthermore, we break up our bounding results into two cases: the case when $r_3\neq 0$ and the case when $r_3=0$ (in which case $r_2\neq 0$).

\medskip

\begin{lemma}
The contribution to the middle term of \eqref{eq_prop_reducible_2} from the region $\FF'\subset\FF$ is $O(x^{5/6-\delta})$. The contribution from elements $x$ such that the corresponding common zero in $\P^2(\Q)$ has height greater than $X^{4\delta}$ is $O(x^{5/6-\delta})$.
\end{lemma}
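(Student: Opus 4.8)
The plan is to estimate both contributions by reducing (via Proposition~\ref{prop:I_to_N}) to counting $G(\Z)$-orbits on reducible elements, and then using Lemma~\ref{lem_red_index} to force a large index in the maximal order whenever either $g$ lies deep in the cusp or the common zero has large height. Recall that, by Proposition~\ref{prop:Nak}, the number of $G(\Z)$-orbits on reducible elements of discriminant $\ll X$ whose associated order has index $\gg M$ in its maximal order is $O(X^{1+o(1)}/M)$; so it suffices to show that in each of the two cases under consideration the index is $\gg X^{1/4+\delta'}$ for some fixed $\delta'>0$ (then the count is $O(X^{3/4-\delta'+o(1)})$, which is a power saving on $X^{5/6}$ after translating back through Proposition~\ref{prop:I_to_N}; note $\lambda^3 \asymp X^{1/4}$).

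First I would dispose of the region $\FF'$. Fix $g = n(t,s_1,s_2)k \in \FF'$ and an $x=(A,B)$ in the support of $g\cB$ whose order $Q$ is an order in $\Q\times K$, $K$ a cubic field. By Lemma~\ref{lem:succ_min} the basis $\langle 1,\alpha_1,\alpha_2,\alpha_3\rangle$ of $Q$ has $\alpha_i$ of lengths $\asymp 1$, $s_1^{-2}s_2^{-1}\lambda^2$, $s_1 s_2^{-1}\lambda^2$, $s_1 s_2^2 \lambda^2$; the maximal order of $\Q\times K$ has successive minima $\asymp 1,1,1,m_4$ for some $m_4$. Hence the index of $Q$ in $\cO_K$ is $\gg$ (product of the three largest $\alpha_i$-lengths divided by the product of the three largest minkowski minima of $\cO$), i.e. at least a constant times the smallest of the three lengths $s_1^{-2}s_2^{-1}\lambda^2$, $s_1 s_2^{-1}\lambda^2$, $s_1 s_2^2\lambda^2$ (this is the argument already used in the proof of Proposition~\ref{prop_reducible_bound_P2Q} for the $\Q\times\Q\times K$ case, adapted: here only one of the three directions is ``large''). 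Using the cusp inequalities \eqref{eq_red_prop_cond_s12t} and the defining condition of $\FF'$ (one of $t \geq X^{7\delta}$, $s_1 \geq X^\delta$, $s_2 \geq X^\delta$), a short manipulation of the inequalities $s_1^4 s_2^2 \ll t\lambda$, $t s_1 \ll s_2\lambda$, $t s_2^2 \ll s_1\lambda$, $t\ll\lambda$ (exactly as in the displayed chain $s_1 s_2^2 \ll \lambda s_1^{-1} \ll \lambda$ in the main proof) shows the relevant length is $\gg \lambda^3 X^{\delta'} \asymp X^{1/4+\delta'}$ for a fixed $\delta' > 0$ depending on $\delta$. (One must run the three sub-cases separately; each is a linear-programming bookkeeping exercise in the exponents.) Applying the count $O(X^{1 + o(1)}/M)$ with $M \gg X^{1/4+\delta'}$ finishes this part.

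Second I would handle elements whose common zero $[r_1:r_2:r_3]\in\P^1(\Q)$ has height $H := \max\{|r_1|,|r_2|,|r_3|\} > X^{4\delta}$. By part (1) of Lemma~\ref{lem_red_index}, $I(A,B)/\Z$ is generated by $v := r_1\alpha_1 + r_2\alpha_2 + r_3\alpha_3$; by part (2), the index of $Q$ in its maximal order is divisible by the $2\times 2$ determinant $\det\begin{pmatrix} F_A(w,w_2) & F_A(w,w_3) \\ F_B(w,w_2) & F_B(w,w_3)\end{pmatrix}$ where $w = [r_1:r_2:r_3]$ (taken primitive) and $w_2,w_3$ complete it to a basis of $\Z^3$. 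The point is that this determinant, being an integer, is either $0$ — which cannot happen when $\Delta(x)\neq 0$, since then $I(A,B)$ would be all of $\cO_K\cap(\text{idempotent module})$ and the ring would be reducible of a type we've excluded, or more carefully because $F_A, F_B$ restricted to the complement of $w$ must be nondegenerate as $A,B$ have a \emph{unique} common zero — or it is a nonzero integer which, by the size bounds on $A,B$ from $g\cB(x)\neq 0$ together with $\|w\| = H$, must have absolute value $\gg$ some positive power of $H$ (since scaling $w$ by $H$ scales each $F_\bullet(w,w_i)$ linearly in $H$, after accounting for the ambient scaling by $\lambda$ the determinant is forced to grow like $H^2$ relative to the ``generic'' size, so it is $\gg H^2/\lambda^{?}$; the precise exponent needs care but any positive power suffices). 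Hence the index is $\gg X^{4\delta \cdot c}$ for some $c>0$, and combined with the generic-body lower bound index $\gg X^{1/4}$ that we may assume outside $\FF'$ (and inside $\FF'$ we are already done by the first part), we again get index $\gg X^{1/4 + \delta'}$ and conclude as before.

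The main obstacle is the second part: extracting a \emph{quantitative} lower bound on the index from the determinant in Lemma~\ref{lem_red_index}(2) in terms of the height $H$ of the common zero. One has to (i) choose the completion $w_2, w_3$ of $w$ with controlled size — e.g. so that $\|w_2\|,\|w_3\| \ll 1$, which is possible since $w$ is primitive — so that $F_A(w, w_i), F_B(w, w_i)$ inherit clean size bounds from the coordinate bounds on $(A,B)$ in $g\cB$'s support scaled by $\lambda$; (ii) rule out the determinant vanishing, using that $[r_1:r_2:r_3]$ is the \emph{unique} common zero so the pencil restricted to $\langle w_2, w_3\rangle$ is nondegenerate; and (iii) track how rescaling $w \mapsto w/H$ (to make it a genuine $\P^2$ point vs.\ a primitive integer vector) interacts with the $G(\R)$-scaling $\lambda$, to see the determinant is $\gg H^{2-o(1)}$ up to $\lambda$-powers already absorbed into the $X^{1/4}$ baseline. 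Steps (i) and (ii) are routine; step (iii) is where one must be careful with the bookkeeping, but since the lemma only demands \emph{some} power saving, a crude bound suffices and no optimization is needed.
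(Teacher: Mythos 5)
There is a genuine gap in both halves of your argument. For the region $\FF'$, your strategy is to force a large index from Lemma \ref{lem:succ_min}, but the lengths of the basis vectors $\alpha_1,\alpha_2,\alpha_3$ depend only on $s_1,s_2,\lambda$ and not on $t$ at all; in the sub-case $t\geq X^{7\delta}$ with $s_1,s_2\asymp 1$ all three lengths are $\asymp\lambda^2$ and the ring can perfectly well be maximal (index $1$), so no index lower bound of the shape $X^{1/4+\delta'}$ — or indeed any power of $X$ — is available. (Your claimed baseline ``index $\gg X^{1/4}$'' is also false: maximal orders occur throughout $\FF\setminus\FF'$, and the correct threshold needed for Proposition \ref{prop:Nak} to give $O(X^{5/6-\delta+o(1)})$ is only $X^{1/6+\delta}$.) The paper's treatment of the $t$-large range is instead a direct lattice-point count: after reducing to common zeros of height $\leq X^{4\delta}$ (so only $O(X^{12\delta})$ of them) and splitting into $r_3\neq 0$ and $r_3=0$, fixing the zero determines $a_{33},b_{33}$ (resp.\ one fibers over $a_{11},a_{22},b_{11},b_{22}$, which pins down $r_1,r_2$ up to $X^{o(1)}$ and then $a_{12},b_{12}$), and the remaining count $\ll\lambda^{10}$ beats the $t^{-2}$ in the Haar measure over $t>X^{7\delta}$. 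Your proposal never engages with the $r_3=0$ case, which is exactly where the single-generator index bound degenerates (the generator then involves only the two shorter $\alpha_i$'s).

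For the large-height part, your appeal to Lemma \ref{lem_red_index}(2) does not work as sketched: upper bounds on the entries $F_A(w,w_i),F_B(w,w_i)$ cannot force the nonzero integer determinant to be $\gg H^{2-o(1)}$; cancellation is not ruled out by anything you say, and in the paper that determinant is only used for the density computation in Lemma \ref{lem_density_nonmax_00}, not here. The mechanism the paper actually uses is Part (1) of Lemma \ref{lem_red_index}: the primitive vector $r_1\alpha_1+r_2\alpha_2+r_3\alpha_3$ generates $I(A,B)/\Z$, while the corresponding idempotent module of the maximal order has a generator of length $\asymp 1$, so the index is $\gg\max_i|r_i|\cdot\mathrm{len}(\alpha_i)\gg H\cdot X^{-3\delta}\lambda^2$ once one has restricted (using the first half of the lemma) to $s_1,s_2<X^\delta$; with $H>X^{4\delta}$ this gives index $\gg X^{1/6+\delta}$ and one concludes via Proposition \ref{prop:Nak}. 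This is the same type of argument you attempt, but with the correct input (Part (1), not Part (2)) and the correct quantitative target. Finally, note that the minima profile $\asymp 1,1,1,m_4$ you quote is that of the maximal order of $\Q\times\Q\times K_2$, a case disposed of before the lemma; here the relevant algebras are $\Q\times K$ with $K$ cubic, whose maximal order has only two short vectors.
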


\begin{proof}
We prove the lemma by partioning the set of $x$ being counted into two sets. Let $[r_1:r_2:r_3]$ denote the common zero in $\P^2(\Q)$ corresponding to $x$. The two sets are the set of $x$ for which $r_3\neq 0$, and the set of $x$ for which $r_3=0$.

\medskip

\noindent{\bf Step 1: Bounds when $r_3\neq 0$.}
From Lemmas \ref{lem:succ_min} and \ref{lem_red_index} (Part (1)), it follows that the index of any $x\in G(\Z)$, with $(g\cB)(x)\neq 0$, and associated common zero $[r_1:r_2:r_3]$ is at least of size $$\max\{|r_1|s_1^{-2}s_2^{-1}\lambda^2, |r_2|s_1s_2^{-1}\lambda^2,|r_3|s_1s_2^2\lambda^2\}.$$

Hence, if we restrict $g\in\FF$ to the range when either $s_1\geq X^\delta$ or $s_2\geq X^\delta$, we see that the index of the $x$ being counted are $\gg X^{1/6+\delta}$, giving a bound of $O(X^{5/6-\delta+o(1)})$ by Proposition \ref{prop:Nak}.
Restricting therefore to the set of $g\in\FF$ with $s_1,s_2< X^\delta$, we now note that if the height of the common zero is greater than $X^{4\delta}$, then again the index of the $x$ being counted is $\gg X^{1/6+\delta}$. We may thus restrict the integral to $g\in\FF$ with $s_1,s_2<X^\delta$ and restrict the sum over $x\in V(\Z)$ to those whose associated common zero has height less than $X^{4\delta}$. 

Let us impose this restriction, and further confine ourselves to the region where $t\gg X^{7\delta}$. Summing over these $X^{12\delta}$ possible common zeroes, and noting that fixing such a zero with $r_3\neq 0$ determines the value of $a_{33}$ and $b_{33}$ (given the rest of the values of $A$ and $B$), yields a bound of
\begin{equation*}
X^{12\delta}\int_{t>X^{7\delta}}\lambda^{10}t^{-2}d^\times t\ll X^{5/6-2\delta},
\end{equation*}
which is also sufficiently small.

\medskip

\noindent{\bf Step 2: Bounds when $r_3= 0$.} In this case, we must have $r_2\neq 0$. We begin by noting that we can now assume $w(a_{12})\gg 1$, since if $a_{11}=a_{12}=0$, then $A$ can only have a zero at $[r_1:r_2:0]$ when $r_2$ is also $0$.
We fiber over $a_{11}$, $a_{22}$, $b_{11}$, and $b_{22}$. These values determine $r_1$ and $r_2$ up to $O(X^{o(1)})$ possibilities, and once $r_1$ and $r_2$ are also determined, this uniquely determines $a_{12}$ and $b_{12}$. We may separate the integral over $\FF$ into two regions: the region where $a_{11}$ is forced to be $0$, and the region where it is not. These integrals are respectively bounded by $X^{o(1)}$ times
\begin{equation*}
\int_{t,s_1,s_2\ll P(\lambda)} \lambda^9 t^{-1}d^\times ts^\times s_1 d^\times s_2\quad\mbox{and}
\quad \int_{t,s_1,s_2} \lambda^{10} t^{-2}s_1^{-4}s_2^{-2}d^\times ts^\times s_1 d^\times s_2,
\end{equation*}
where $P(\lambda)$ is some polynomial in $\lambda$ bounding $t$, $s_1$, and $s_2$ (the existence of such a bound follows from \eqref{eq_prop_reducible_2}). It is immediately clear that if the integral is restricted to $\FF'\subset\FF$, the corresponding bound is improved to $O(X^{5/6-2\delta})$, which is sufficiently small.

We may thus restrict our integral to $\FF\backslash \FF'$. Doing so, and assuming that the height of the common zero $[r_1:r_2:0]$ is greater than $X^{4\delta}$ once again yields a sum over those $x\in V(\Z)$ whose associated quartic ring must have index at least $X^{1/6+\delta}$, by applying Lemma \ref{lem:succ_min}. Another application of Proposition \ref{prop:Nak} gives us the required bound.
\end{proof}

\medskip

Finally, we sum over those  common roots of small height. Let $g\in\FF\backslash\FF'$, let $r=[r_1:r_2:r_3]$ be a point in $\P^2(\Q)$ with height $h<X^{4\delta}$, and let $S_r$ denote the set of elements in $V(\Z)$ with a common zero at $r$. The condition that $(A,B)\in V(\Z)$ must have a common zero at $r$ imposes a linear condition on the coefficients of $A$ and on the coefficients of $B$ (in fact the same condition). This linear condition has coefficients of size $\leq h^2<X^{4\delta}$, and at least one coefficient of this linear condition has size $h^2$.

Since $g\in\FF\backslash \FF'$ is not high up in the cusp, as long as $\delta$ is sufficiently small, summing $g\cB$ over $S_r$ and integrating over $g\in\FF\backslash\FF'$ grows like $c_rX^{5/6}$ with a power saving error term, where $c_r\ll 1/h^4$ (a factor of $1/h^2$ coming from both the $a_{ij}$'s and the $b_{ij}$'s). The sum of $c_r$ over $r$ clearly converges, yielding Proposition \ref{prop_reducible_bound_P2Q} when $\phi_\fin=1$. For general functions $\phi_\fin$, we finish by noting that the density of $\phi_\fin$ on each set $S_r$ is the same: indeed, the group $\GL_3(\Z)$ acts transitively on $\P^2(\Q)$, and so to compute this density, we may simply move $r$ to $[1:0:0]$ and use the $G(\Z)$-invariance of $\phi_\fin$ to conclude. $\Box$

\medskip

\begin{comment}
We have the following corollary of the above proof:
\begin{corollary}\label{cor_reducible_bound_P2Q}
Let notation be as in Proposition \ref{prop_reducible_bound_P2Q}. Then we have
\begin{equation*}
\int_{\substack{g\in\FF\\t>X^\delta}}\sum_{x\in V(\Z)}\phi_\red(x)(g\cB)(x)\psi\Big(\frac{\lambda(g)}{X^{1/12}}\Big)dg = O(X^{5/6-2/7\delta+o(1)}).
\end{equation*}
\end{corollary}
\noindent The proof of this corollary can be extracted from Steps 1 and 2 in the proof of Proposition \ref{prop_reducible_bound_P2Q}.
\end{comment}

Theorem \ref{th_red_main_body_final} follows from Propositions \ref{prop_reducible_bound_resolvent} and \ref{prop_reducible_bound_P2Q}, in conjunction with the following lemma.

\begin{lemma}\label{lem_density_nonmax_00}
Let $q$ be a squarefree integer and let $\phi_q$ be the characteristic function of the set of elements $(A,B)\in V(\Z)$ which are nonmaximal at $q$. Then $C(\phi_q)=1/q+O(1/q^2)$.
\end{lemma}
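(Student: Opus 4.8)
The plan is to compute the density $C(\phi_q)$ of the set of elements in $V(\Z)$ that are nonmaximal at $q$ inside the $11$-slice $\{(A,B)\in V(\Z):a_{11}=b_{11}=0\}$. Since $q$ is squarefree, the condition of nonmaximality at $q$ factors as a product over the primes $p\mid q$ of the condition of nonmaximality at $p$; hence it suffices by multiplicativity to show that the density of $p$-nonmaximal elements in the $11$-slice, as a $p$-adic volume inside $V(\Z_p)^{a_{11}=b_{11}=0}$, equals $1/p+O(1/p^2)$, and then take the product over $p\mid q$. So the real content is a local computation at a single prime $p$.

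\medskip

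First I would recall from Theorem \ref{th:quartic_param} (or rather its $\Z_p$ version) that $G(\Z_p)$-orbits on $V(\Z_p)$ correspond to triples $(Q_p,C_p,r_p)$, and that an element $(A,B)$ is nonmaximal at $p$ exactly when the corresponding quartic ring $Q_p$ is not maximal, equivalently $p^2\mid \Delta(A,B)$ in a way detected by the ring structure. The density of $p$-maximal elements in all of $V(\Z_p)$ is classically known (Bhargava's mass formula / the $p$-maximal density computation) to be $1 - c/p + O(1/p^2)$ for an explicit $c$; the key point here is only to track how restricting to $a_{11}=b_{11}=0$ changes the leading $1/p$ term. The natural approach is a direct count modulo $p$: parametrize the reductions $(\bar A,\bar B)\in V(\F_p)$ lying in the slice $\bar a_{11}=\bar b_{11}=0$, and for each determine whether a $\Z_p$-lift can be nonmaximal, using the standard criterion that $p$-nonmaximality is governed (to leading order) by the geometry of the pencil of conics $\bar A x - \bar B y$ over $\F_p$ — specifically by whether the reduced pair has a singular member or a common zero of the right type, conditions each cutting out a codimension-$1$ subvariety and hence contributing $O(1/p)$.

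\medskip

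The cleanest route is probably to exploit the description already invoked in the paper: elements in the $11$-slice are reducible (by the Proposition preceding \S2.2 and the remark after its proof), i.e. the conics $\bar A,\bar B$ have the common rational zero $[1:0:0]$ forced by $a_{11}=b_{11}=0$. So over $\Z_p$ the slice consists of pairs whose quartic ring is an order in $\Q_p\times(\text{cubic \'etale algebra})$, with the cubic piece given essentially by the "lower-right $2\times 2$ block" data $(a_{22},a_{23},a_{33},b_{22},b_{23},b_{33})$ together with the cross terms $(a_{12},a_{13},b_{12},b_{13})$. I would make a linear change of coordinates so that the $\Z_p$-point's maximality is visible: the quartic order is maximal at $p$ iff (i) the associated cubic ring over $\Z_p$ is maximal at $p$, and (ii) the "gluing index" $I(A,B)$ of Lemma \ref{lem_red_index} is prime to $p$. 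Condition (i) fails with density $1/p + O(1/p^2)$ by Davenport–Heilbronn's local density for binary cubic forms (the cubic resolvent is $\Res(A,B)$, and its reduction ranges over a full-density subset of binary cubic forms as the slice coordinates vary); condition (ii) fails with density $O(1/p)$ but is governed by the vanishing mod $p$ of a single determinant $\det\!\begin{pmatrix}F_A(w,w_2)&F_A(w,w_3)\\ F_B(w,w_2)&F_B(w,w_3)\end{pmatrix}$, a nonzero polynomial in the coordinates, so it contributes $1/p+O(1/p^2)$ as well — but I expect the overlap and one of these to not actually contribute to the leading term, so that the total comes out to exactly $1/q+O(1/q^2)$ after assembling. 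The main obstacle is pinning down precisely which of these codimension-one conditions carries the leading $1/p$ and verifying there is no cancellation or doubling: this requires carefully unwinding Bhargava's multiplication tables (the reference \cite[\S3.2, Lemma 22, Eq.~(43)]{BHCL3} cited in Lemma \ref{lem_red_index}) in the coordinates adapted to $a_{11}=b_{11}=0$, and then a routine but careful $\F_p$-point count. Once the local density is shown to be $1/p+O(1/p^2)$, multiplicativity over $p\mid q$ and the bound $\prod_{p\mid q}(1/p+O(1/p^2)) = 1/q + O(q^{-2}\cdot q^{o(1)})$... \textemdash\ more precisely $1/q\prod_{p\mid q}(1+O(1/p)) = 1/q + O(1/q^2)$ since $\sum_{p\mid q}1/p \ll q^{o(1)}$ is absorbed — completes the proof.
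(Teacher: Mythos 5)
Your overall framework (reduce by CRT to a local density at each prime $p\mid q$, and detect nonmaximality inside the slice $a_{11}=b_{11}=0$ through the forced common zero at $[1:0:0]$ and the index determinant of Lemma \ref{lem_red_index}(2)) is the right one, but the proposal has a genuine gap at exactly the point the lemma requires. You assert that the condition ``the associated cubic ring is nonmaximal at $p$'' fails with density $1/p+O(1/p^2)$ by Davenport--Heilbronn; this is false. The Davenport--Heilbronn local density of \emph{maximal} binary cubic forms is $(1-p^{-2})(1-p^{-3})$, so nonmaximality of the cubic (resolvent) ring occurs with density $p^{-2}+p^{-3}-p^{-5}=O(1/p^2)$, not $1/p$. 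Having produced two codimension-one conditions each allegedly of density $1/p$, you then write that you ``expect the overlap and one of these to not actually contribute to the leading term'' --- but deciding which condition carries the leading $1/p$, and ruling out doubling or cancellation, is precisely the content of the lemma; as stated, your two density claims would give $2/p+O(1/p^2)$ unless a cancellation you never establish occurs, so the argument does not close.

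The paper's proof is short and resolves exactly this point: within the slice, the index of the quartic ring in its maximal order is divisible by the $2\times 2$ determinant of Lemma \ref{lem_red_index}(2), which in these coordinates is (up to a unit factor) $a_{12}b_{13}-a_{13}b_{12}$; its vanishing mod $q$ has probability $1/q+O(1/q^2)$, since a random $2\times 2$ determinant vanishes mod $p$ with probability $1/p+1/p^2-1/p^3$. Every \emph{other} way of being nonmaximal at $q$ --- including nonmaximality of the cubic resolvent --- occurs with density $O(1/q^2)$, verified by a computation identical to that of \cite[\S4.2]{BHCL3}. So the leading term comes solely from the index determinant, and no delicate inclusion--exclusion between two order-$1/p$ conditions is needed. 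To repair your argument you would correct the Davenport--Heilbronn density to $O(1/p^2)$ and then prove the uniform $O(1/p^2)$ bound for all sources of nonmaximality other than the determinant condition, rather than leaving that bookkeeping open.
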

\begin{proof}
The proof follows from Part 2 of Lemma \ref{lem_red_index} since the probability that a $2\times 2$ matrix has determinant $0$ mod $q$ is $1/q+O(1/q^2)$. The probability that $(A,B)$ is nonmaximal at $q$ in some other way is $O(1/q^2)$, as can be verified with a computation identical to that of \cite[\S4.2]{BHCL3}.
\end{proof}

\section{Counting elements points in the generic body}

Let $\phi:V(\Z)\to\R$ be a $G(\Z)$-invariant function defined modulo a finite number. We let $\phi^\gb$ denote the product of $\phi$ with the characteristic function of the  generic body $V(\Z)^\gb$. (We recall that $V(\Z)^\gb$ denotes the set of elements $(A,B)\in V(\Z)$ with $\det(A)\neq 0$ and $(a_{11},b_{11})\neq (0,0)$.) In this section, we prove the following result.

\begin{theorem}\label{thm_point_count_generic}
Let notation be as above, let $\eta:\R_{\geq 0}\to\R_{\geq 0}$ be a smooth function with compact support away from $0$, and let $\cB:V(\R)^{(i)}\to\R$ be a smooth function with compact support away from the discriminant $0$ locus. Then we have
\begin{equation*}
\begin{array}{rcl}
\cI_\eta(\phi^\gb,\cB;X) &=&\displaystyle
\Vol_{dg}(\FF)\wt{\eta}(12)\nu(\phi)\Vol(\cB)X+\bigl(C_{00}(\phi)+C\wt{\eta}(10)Z\bigl((\cB)_{\emptyset;\{A\}},\nu(\phi_A);4/3\bigr)\bigr) X^{5/6}
\\[.2in]&&
+cX^{21/24}+O(X^{5/6-\theta'}),
\end{array}
\end{equation*}
for constants $c$ and $\theta'>0$, and where $C_{00}(\phi)$ is as in the discussion surrounding \eqref{eq:extra561}, and where $C$ is as in \cite[Proposition 7.1]{ST_second_main_term_1}.

Moreover, we have
\begin{equation*}
C_{00}(\phi_q)=C_{00}(\phi)\Bigl(\frac1{q}+O(q^{-2})\Bigr),
\end{equation*}
where $\phi_q$ is as in the statement of Theorem \ref{th_red_main_body_final}.
\end{theorem}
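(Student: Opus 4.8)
\medskip
\noindent\textbf{Proof sketch (plan).}
The plan is to evaluate the integral \eqref{eq:I-def} for $\phi^\gb$ by running the (suitably modified) counting method of \cite{ST_second_main_term_1}, adapted so that it tolerates lattices $V(\Z)$ carrying both generic and non-generic points. The structural gain from having deleted the $11$-slice and the $\det$-slice is that every integral point lying deep in the cusp lies in one of these two slices; hence $\phi^\gb$ annihilates every deep-cusp point and it suffices to integrate over the round body and the shallow cusp. On the round body — the $g\in\FF$ for which every coordinate function $a_{ij},b_{ij}$ is $\gg$ a fixed small power of $X$ on $g\cB$ — the weighted count $\sum_{x}\phi(x)(g\cB)(x)$ equals $\Vol(\cB)\,\nu(\phi)$ up to a boundary error that becomes negligible after integrating over $\FF$ against $\eta(\lambda(g)/X^{1/12})\,dg$, producing the main term $\Vol_{dg}(\FF)\,\wt\eta(12)\,\nu(\phi)\,\Vol(\cB)\cdot X$.

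All remaining terms come from the shallow cusp, and this is the step that genuinely uses \cite{ST_second_main_term_1}. I would stratify the shallow cusp according to which coordinate functions are forced small on $g\cB$, and on each stratum apply the refined cusp estimates of \cite{ST_second_main_term_1}, retaining the leading constant and the boundary contributions. Exactly two strata then survive at order $X^{5/6}$: the stratum on which precisely $a_{11}$ and $b_{11}$ are forced small, whose leading contribution is a $\phi$-weighted volume over the $10$-dimensional slice $\{a_{11}=b_{11}=0\}$, yielding $C_{00}(\phi)X^{5/6}$ with $C_{00}(\phi)$ as around \eqref{eq:extra561}; and the stratum governed by the degeneration of the $A$-block (where $\det A$ is forced small), whose contribution is identified through \cite[Proposition 7.1]{ST_second_main_term_1} with $C\,\wt\eta(10)\,Z\bigl((\cB)_{\emptyset;\{A\}},\nu(\phi_A);4/3\bigr)X^{5/6}$. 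One intermediate stratum contributes $cX^{21/24}$ after optimizing the cutoff, and every other stratum is $O(X^{5/6-\theta'})$ for a suitable $\theta'>0$. I expect the main obstacle to be this cusp bookkeeping: one must check that the non-generic points now present in the round body and shallow cusp — those estimated in \S4 — perturb neither the main term nor the secondary terms, and that the stratification yields no $X^{5/6}\log X$ term. Both checks should follow from the index lower bounds of Lemma \ref{lem:succ_min} together with Proposition \ref{prop:Nak}, which force such points high into the cusp and hence into the error.

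For the final assertion only the shape of $C_{00}(\phi)$ matters, not its value. By its construction $C_{00}$ is linear in $\phi$ and is a $\phi$-weighted volume over $\{a_{11}=b_{11}=0\}\cong\R^{10}$; since $\phi$ is periodic, every such weighted volume factors as a $\phi$-independent constant $\kappa=\kappa(\eta,\cB)$ times the density of $\phi$ on the $11$-slice — the same factorization $C_\red(\phi)=C_\red\cdot C(\phi_\fin)$ that appears in Proposition \ref{prop_reducible_bound_P2Q}. Thus $C_{00}(\phi)=\kappa\,C(\phi)$, where $C(\cdot)$ is the density on $\{a_{11}=b_{11}=0\}$ of Proposition \ref{prop_reducible_bound_P2Q} and Lemma \ref{lem_density_nonmax_00}. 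Writing $\phi_q=\phi\cdot\rho_q$ with $\rho_q$ the characteristic function of the elements of $V(\Z)$ nonmaximal at $q$, and using that the modulus of $\phi$ is coprime to $q$, the Chinese Remainder Theorem gives $C(\phi_q)=C(\phi)\,C(\rho_q)$, while Lemma \ref{lem_density_nonmax_00} gives $C(\rho_q)=1/q+O(1/q^2)$. Hence $C_{00}(\phi_q)=\kappa\,C(\phi_q)=\kappa\,C(\phi)\bigl(1/q+O(1/q^2)\bigr)=C_{00}(\phi)\bigl(1/q+O(1/q^2)\bigr)$, as claimed.
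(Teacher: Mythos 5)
Your overall route is the paper's route: split off the cuspidal ranges, run the counting method of \cite{ST_second_main_term_1} adapted to a periodic weight $\phi$ with the $11$-slice removed by hand, identify the two surviving $X^{5/6}$ contributions as (i) the term proportional to the density of $\phi$ on $\{a_{11}=b_{11}=0\}$ (the term \eqref{eq:extra561}, discarded in the $S_4$-lattice case but nonzero here) and (ii) the ternary-quadratic-form zeta integral $Z\bigl((\cB)_{\emptyset;\{A\}},\nu(\phi_A);4/3\bigr)$, with an $X^{21/24}$ term and power-saving errors; and your proof of the last assertion (linearity of $C_{00}$ in the slice density, CRT factorization, Lemma \ref{lem_density_nonmax_00}) is exactly the paper's Lemma on $C_{00}(\phi_q)$.

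There is, however, a genuine gap in the step you call the main obstacle. You claim that the non-generic points lying in the generic body ``perturb neither the main term nor the secondary terms'' because Lemma \ref{lem:succ_min} and Proposition \ref{prop:Nak} force them high into the cusp and hence into the error. This is false: by Theorem \ref{th_red_main_body_final} these points contribute $C^{\ngen,\gb}_\phi X^{5/6}$ with a constant that is in general nonzero (for instance, reducible $(A,B)$ whose common zero in $\P^2(\Q)$ has small height are subject to no large-index obstruction and give the genuinely positive $X^{5/6}$ term of Proposition \ref{prop_reducible_bound_P2Q}; the index argument only kills the $\Q\times\Q\times K$ and large-height cases). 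More to the point, no such check is needed: $\phi^\gb$ excludes only the two slices, not non-generic elements, so the theorem's constants deliberately include the non-generic contribution from the generic body; it is subtracted only later, and the fact that it equals $C_{00}(\phi)$ is not something one can see at this stage — it is proved indirectly in Lemma \ref{lem:constant_cancel} by comparing rates of decay in $q$. So if you carry out your stratification for all $\phi$-weighted points of the generic body the expansion comes out right, but the auxiliary claim you propose to verify is both unnecessary and unprovable as stated, and discarding those points as error would change the $X^{5/6}$ coefficient. A smaller omission: the exclusion $\det(A)\neq 0$ is not a congruence condition and breaks the lattice/slicing structure; the paper restores it by showing (via Proposition \ref{prop_reducible_bound_resolvent} and Corollary \ref{cor_reducible_bound_resolvent}) that the $\det(A)=0$ slice contributes $O(X^{23/28+3\delta+o(1)})$ in the relevant ranges, a step your sketch should include.
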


\subsection{Setup}

The results in this section will be obtained with a (small) modification of the results and methods of \cite{ST_second_main_term_1}. As there, we will proceed by first dividing the integral defining $\cI_\eta(\phi^\gb,\cB;X)$ into two pieces, corresponding to $t$ small and $t$ large, respectively.
To this end, let $f_0:\R_{\geq 0}\to \R_{\geq 0}$ be a smooth and compactly supported function satisfying $f_0(x)=1$ for $x\in[0,2]$. Define $f$ by setting $f(x)=1-f_0(x)$. Throughout this section, we fix $\delta>0$, which will be assumed to be small. For a real number $X>0$, define the functions $f^X:\R_{\geq 0}\to\R$ and $f_0^X:\R_{\geq 0}\to \R$ by setting
\begin{equation*}
f_0^X(x):=f_0(x/X^\delta);\quad f^X(x):=f(x/X^\delta).
\end{equation*}
For $g_2\in\FF_2$ with Iwasawa decomposition $g_2=(n,t,k)$, we define $f_0^X(g_2):=f_0^X(t)$ and $f^X(g_2):=f^X(t)$, and for $g=(\lambda,g_2,g_3)\in\FF$, we define $f_0^X(g):=f_0^X(g_2)$ and $f^X(g):=f^X(g_2)$.
We break up $\I_\eta(\phi^\gb,\cB;X)$ as $\I_\eta(\phi^\gb,\cB;X)=\I^{(1)}(\phi^\gb,\cB;X)+\I^{(2)}(\phi^\gb,\cB;X)$, where
\begin{equation}\label{eq:I1-def}
    \I^{(1)}(\phi^\gb,\cB;X)=\int_{g\in\FF}\sum_{x\in V(\Z)} \phi^{\gb}(x)(g\cB)(x)\eta\Big(\frac{\lambda(g)}{X^{1/12}}\Big)f_0^X(g)dg;
\end{equation}
\begin{equation}\label{eq:I1-def}
    \I^{(2)}(\phi^\gb,\cB;X)=\int_{g\in\FF}\sum_{x\in V(\Z)} \phi^{\gb}(x)(g\cB)(x)\eta\Big(\frac{\lambda(g)}{X^{1/12}}\Big)f^X(g)dg.
\end{equation}

We import notation on multiple zeta functions and smooth functions from \cite[\S4.1,4.4]{ST_second_main_term_1} which will be necessary in the rest of this section. We say $f:\Z^n\to\C$ is a periodic function if it is defined by congruence conditions modulo some positive integer. We use $a_1,\dots,a_n$ to denote the coordinates on $\Z^n$. Let $t\in\{\pm 1\}^n$. Writing $\vec{s}$ for $(s_1,\ldots,s_n)$, we define the multiple zeta function $\zeta_{f,t}(s_1,\ldots,s_n)$ associated to $f$ and $t$ by
$$\zeta_{f,t}(\vec{s}):= \sum_{t\cdot \vec{a}\in\Z_{>0}^n} f(\vec{a})\prod_{i=1}^n  |a_i|^{-s_i}.$$ 
Note that since $f$ is periodic, its values are absolutely bounded. Hence $\zeta_{f,t}(\vec{s})$ converges absolutely for $(s_1,\ldots,s_n)\in\C^n$ with $\Re(s_i)>1$ for each $i$.

\begin{definition}
This follows \cite[\S4.1]{ST_second_main_term_1}. Let $S,T\subset\{1,2,\dots,n\}$ be disjoint subsets. Denote the complement of $S\cup T$ by $R$.  For each element $v\subset \Z^T$ we define $f_{S;T}(v)$ to be the average value of  
$f$ on the set $\{0_S\}\times\{v\}\times\Z^R\subset\Z^n$. Here, by $\{0_S\}\times\{v\}\times\Z^R$, we mean the subset of elements $w\in\Z^n$ such that $a_i(w)=0$ for $i\in S$ and $a_i(w)=a_i(v)$ for $i\in T$. For $t\in\{\pm1\}^T$ and $s_T\in\C^T$, we then define
$$\zeta_{f,t}(S=0;T)(s_T):= \sum_{t\cdot \vec{a}\in \Z_{>0}^T} f_{S,T}(\vec{a})\prod_{i\in T} |a_i|^{-s_i}.$$ 
By convention, we will write $\zeta_{f,t}(T)(s_T)$ for $\zeta_{f,t}(\emptyset=0;T)(s_T)$ when $S$ is empty. When $T=S^c$, we write $\zeta_{f,t}(S=0)(s_T)$ for $\zeta_{f,t}(S=0;S^c)(s_T)$. Note that $\zeta_{f,t}(S=0;\emptyset)$ is simply a complex number, namely, the density of $f$ on the set $\{0_S\}\times\Z^{S^c}$. We will denote this density by $\nu(f|_S)$. If $L$ is a set whose characteristic function $\chi_L$ is periodic we shall write $\zeta_L$ for $\zeta_{\chi_L}$ and $\nu(L|_S)$ for $\nu(\chi_L|_S)$. We define $\zeta_f(S=0;T)$ to be the vector indexed by $(\pm 1)^{T}.$
\end{definition}

When we have a function $B$ in $n$ variables $x_1,\dots,x_n$, for each $\vec{t}\in(\pm)^n$ we set 
$$\tilde{B}_{\vec{t}}(\vec{s}) = \int_{\R_+} B(t_1x_1,\dots,t_nx_n)\prod_i x_i^{s_i} d^{\times}\vec{x}$$ and $\tilde{B}(\vec{s})$ to be the element in $\C^{(\pm)^n}$ whose co-ordinates are 
$\tilde{B}_{\vec{t}}(\vec{s})$.

\begin{definition}
Given a function $B:\R^n\to \R$ and disjoint sets $S,T\subset [n]$ we write $B_{S;T}:\R^T\to\R$ to be the function 
$$B_{S;T}(\vec s_T):=\int_{\R^R}B(\vec 0_S,\vec s_T,\vec s_R)d\vec s_R$$ for $R=(S\cup T)^c$. In other words, we restrict the $S$ co-ordinates to be $0$ and integrate over the remaining co-ordinates except for $T$. We also write
$B_S$ to denote $B_{S;\emptyset}$.
\end{definition}

\subsection{The part with $t$ small}

In this section, we evaluate $\I^{(1)}(\phi^\gb, \cB;X)$:
\begin{proposition}\label{prop:I1}
We have
\begin{equation*}
\begin{array}{rcl}
\I^{(1)}(\phi^\gb,\cB;X)&=&\displaystyle
X\Vol_{dg_3}(\FF_3)\wt{\eta}(12)\nu(\phi)\Vol(\cB)\int_{g_2\in\FF_2}f_0^X(g_2)dg_2+C_{00}(\phi) X^{5/6}
\\[.2in]&&
+cX^{21/24}+O(X^{23/28+3\delta+o(1)})
\end{array}
\end{equation*}
for some constants $c=c_\phi$ and $C_{00}(\phi)$.
\end{proposition}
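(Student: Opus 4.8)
The plan is to evaluate $\I^{(1)}(\phi^\gb,\cB;X)$ by comparing it with the corresponding integral from \cite{ST_second_main_term_1}, where an $S_4$-lattice was used, and carefully accounting for the differences introduced by (a) working in the generic body $V(\Z)^\gb$ rather than restricting to generic elements, and (b) the restriction $f_0^X(g)$ to the range $t\ll X^\delta$. The key observation is that on the range $t\ll X^\delta$ the cusp is ``shallow'', so the methods of \cite{ST_second_main_term_1} apply essentially verbatim, once we subtract off the contribution of the non-generic elements that lie in the generic body — and that contribution is controlled by Corollary \ref{cor_reducible_bound_resolvent} (for the reducible-resolvent part) together with the analysis in \S4 (for the reducible part, noting that elements of $V(\Z)^\gb$ which are reducible have nonzero $(a_{11},b_{11})$ and nonzero $\det A$, so Proposition \ref{prop_reducible_bound_P2Q} applies).

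First I would decompose $\phi^\gb = \phi^{\gb,\gen} + \phi^{\gb,\ngen}$, where the first piece is the restriction of $\phi$ to generic elements in the generic body, and the second is the restriction to non-generic elements in the generic body. For the generic piece, since every element being counted is generic, I can invoke the counting machinery of \cite{ST_second_main_term_1} directly: the integral $\int_{g\in\FF}\sum_x \phi^{\gb,\gen}(x)(g\cB)(x)\eta(\lambda(g)/X^{1/12})f_0^X(g)\,dg$ is evaluated by the same Fourier-analytic / multiple-zeta-function expansion used there, producing the main term $X\Vol_{dg_3}(\FF_3)\wt\eta(12)\nu(\phi)\Vol(\cB)\int_{g_2\in\FF_2}f_0^X(g_2)\,dg_2$, a secondary term of size $X^{5/6}$ whose constant I package as (part of) $C_{00}(\phi)$, the exotic $X^{21/24}$ term coming from the $\det(A)$ slice boundary interactions as in \cite{ST_second_main_term_1}, and a power-saving error. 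The subtlety here is that $\phi^\gb$ is not $G(\Z)$-invariant (the slice conditions break invariance), so I would need to track the slice-boundary terms: these are exactly what produce the $X^{21/24}$ term and they must be handled by the same slicing arguments — restricting to $a_{11}=b_{11}=0$ or $\det A=0$ — that appear in \S4 and in \cite{ST_second_main_term_1}.

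For the non-generic piece $\phi^{\gb,\ngen}$, I would split further into elements with reducible resolvent and reducible elements (genuinely reducible, i.e. case (c)). The reducible-resolvent contribution, restricted to $t\ll X^\delta$, is bounded by $O(X^{23/28+3\delta+o(1)})$ by Corollary \ref{cor_reducible_bound_resolvent}, which is absorbed into the error term. The reducible contribution, by Proposition \ref{prop_reducible_bound_P2Q} (whose hypotheses — $\det A\neq 0$ and $(a_{11},b_{11})\neq(0,0)$ — are precisely the generic-body conditions), is $C_\red(\phi)X^{5/6}+O(X^{5/6-\theta})$; this $X^{5/6}$ contribution gets folded into $C_{00}(\phi)$ as well. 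I should double check that restricting Proposition \ref{prop_reducible_bound_P2Q} to the $t\ll X^\delta$ range does not change the leading constant (it cannot increase it, and the removed range $t\gg X^\delta$ contributes only a power saving, which can be extracted from Steps 1 and 2 of that proof). Collecting the generic main term, the two $X^{5/6}$ contributions, the $X^{21/24}$ term, and all the error terms then yields the stated expansion.

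The main obstacle I anticipate is the bookkeeping around the failure of $G(\Z)$-invariance of $\phi^\gb$ and the precise identification of the constant $C_{00}(\phi)$: one must make sure that the $X^{5/6}$ contributions from (i) the generic count in the shallow cusp, (ii) the slice-boundary corrections, and (iii) the reducible elements all combine into a single well-defined constant, and that no $X^{5/6}\log X$ term sneaks in. This requires re-running the relevant estimates of \cite{ST_second_main_term_1} with the extra cutoff $f_0^X$ in place and verifying that the cutoff only affects the $X$-term (via the $\int_{g_2\in\FF_2}f_0^X(g_2)\,dg_2$ factor) and contributes negligibly to the $X^{5/6}$ and lower terms — i.e., that the interaction between the $t$-cutoff at scale $X^\delta$ and the secondary terms is of size $O(X^{5/6-\theta'})$ for small enough $\delta$. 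The $X^{21/24}$ term, being genuinely present already in \cite{ST_second_main_term_1}, should transfer without new difficulty, but I would want to confirm its constant $c$ is unaffected by the cutoff since $21/24 < 5/6$ is false — in fact $21/24 = 7/8 > 5/6$, so this term is \emph{larger} than the secondary term and must be tracked with care, matching exactly the corresponding term in \cite[Proposition 7.1]{ST_second_main_term_1}.
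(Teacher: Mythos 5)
There is a genuine gap at the heart of your plan. You decompose $\phi^{\gb}=\phi^{\gb,\gen}+\phi^{\gb,\ngen}$ and claim that for the generic piece ``since every element being counted is generic, I can invoke the counting machinery of \cite{ST_second_main_term_1} directly.'' That machinery is not applicable to $\phi^{\gb,\gen}$: it rests on Poisson summation/slicing and the multiple zeta functions $\zeta_{f,t}$, which require the weight to be a periodic (congruence-defined) function, so that sums over slices with $a_{11},b_{11}$ fixed can be replaced by average densities $\nu(\phi|_S)$. Genericity (the quartic algebra being $S_4$ or $A_4$) is a global arithmetic condition, not a congruence condition, so $\phi^{\gb,\gen}$ is not periodic and the step replacing lattice sums by densities fails for it. In \cite{ST_second_main_term_1} the relevant set was an $S_4$-\emph{lattice}: the congruence conditions themselves forced genericity, which is why the machinery applied there; here no such lattice structure exists, and indeed the impossibility of separating generic from non-generic elements inside the analytic count is exactly the difficulty this paper is organized around. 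The paper's proof of Proposition \ref{prop:I1} therefore does \emph{not} split off the generic elements: it keeps all of $\phi$ (which is periodic), removes only the two coordinate slices --- the $\det(A)=0$ condition is discarded at the cost of $O(X^{23/28+3\delta+o(1)})$ via the reducible-resolvent bound, and the $a_{11}=b_{11}=0$ slice is excluded by hand inside the slicing over $(a_{11},b_{11})$, which is compatible with the method --- and the non-generic points remaining in the generic body are left in the count, to be subtracted only in \S6 using Theorem \ref{th_red_main_body_final}.

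A second, related misidentification: you propose to fold the $X^{5/6}$ contribution of the reducible elements (Proposition \ref{prop_reducible_bound_P2Q}) into $C_{00}(\phi)$. In the paper, $C_{00}(\phi)$ is the coefficient of the term \eqref{eq:extra561}, which arises in the Mellin analysis from the nonvanishing of $\vec{\zeta}_\phi(\{a_{11},b_{11}\}=0)$ (the density of $\phi$ on the $11$-slice), i.e.\ the term that was discarded in \cite{ST_second_main_term_1} because it vanished for $S_4$-lattices; it has nothing to do with the non-generic points in the generic body, which are still present in $\I^{(1)}(\phi^{\gb},\cB;X)$. While the proposition as stated only asserts ``for some constants,'' your redefinition would break the later structure: Lemma \ref{lem_C00_q} computes $C_{00}(\phi_q)$ from the density on the $11$-slice, and the key cancellation $C^{\ngen,\gb}_\phi=C_{00}(\phi)$ (Lemma \ref{lem:constant_cancel}) is a nontrivial theorem proved by a sieve, not something one may build into the definition. (Minor further points: the $X^{21/24}$ term comes from the $\{a_{11}=0,\,b_{11}\neq 0\}$ and $\{a_{11}\neq 0,\,b_{11}=0\}$ pieces of the slicing, not from ``$\det(A)$ slice boundary interactions''; and using Proposition \ref{prop_reducible_bound_P2Q} inside $\I^{(1)}$ would also require redoing it with the cutoff $f_0^X$, which your sketch acknowledges but does not carry out.)
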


The proof of this result closely follows the lines of the proof of \cite[Proposition 6.1]{ST_second_main_term_1}, but there are some important differences. These stem from the following issues that arise in our situation here:
\begin{enumerate}
\item In \cite[Proposition 6.1]{ST_second_main_term_1}, the sum was over a lattice $L$ (with no further imposed congruence conditions), while in our situation we are summing $\phi^\gb$ over $V(\Z)$;
\item The lattice $L$ is assumed in \cite[Proposition 6.1]{ST_second_main_term_1} to be an $S_4$-lattice. This meant that $L$ contained no non-generic elements. In our situation, this is not true. Instead, $\phi^\gb$ is supported away from the $\det(A)=0$ and $a_{11}=b_{11}=0$ slices, which we have to address.
\item The assumption of $L$ being an $S_4$-lattice is additionaly used to deduce that $\nu(L_{\{a_{11}=b_{11}=0})=\vec\zeta_L(\{a_{11},b_{11}\}=0)$ is $0$. We have to be more careful in our situation - the analogous statement is not true when working with $\phi$.
\end{enumerate}
We overcome these issues as follows. First, we note that $\phi$ is defined via congruence conditions modulo some positive integer, and hence is a weighted union of lattices. The characteristic function of $V(\Z)^\gb$ is however not of this form - it excludes the $a_{11}=b_{11}=0$ and $\det(A)=0$ slices. Missing the $a_{11}=b_{11}=0$ slice will not be an issue for us. Indeed, the lattice structure of $L$ is never directly used in the proof of \cite[Proposition 6.1]{ST_second_main_term_1}. Rather it is used for the slices of $L$ with $a_{11}$ and $b_{11}$ fixed. We will also slice over values of $a_{11}$ and $b_{11}$, and exclude by hand the $a_{11}=b_{11}=0$ slice. To deal with the $\det(A)=0$ slice, we apply Proposition \ref{prop_reducible_bound_resolvent}, which implies that we have
\begin{equation*}
|\cI^{(i)}(\phi^\gb,\cB;X)-\cI^{(i)}(\phi^{\gb'},\cB;X)|\ll X^{23/28+3\delta+o(1)},
\end{equation*}
where $\phi^{\gb'}$ is the product of $\phi$ with the characteristic function of the set of elements in $V(\Z)$ with $(a_{11},b_{11})\neq (0,0)$. It thus suffices to work with the function $\phi^{\gb'}$ instead of $\phi^{\gb}$, sidestepping the issue of the $\det(A)=0$ elements. Moreover, since we will immediately be slicing over $a_{11}$ and $b_{11}$, we can simply work with $\phi$, as long as we avoid the $a_{11}=b_{11}=0$ slice.

We must also remember that the count above includes nongeneric points (aside from those with $a_{11}=b_{11}=0$), and we deal with them in \S6. Finally, for Issue (c), we will proceed as in the proof of \cite[Proposition 6.1]{ST_second_main_term_1}, keeping track of when terms containing these zeta functions are discarded. Here, instead of discarding the terms, we will evaluate them.

\medskip

\begin{proof}
We begin with the following lemma.
\begin{lemma}
Set $S:=\{a_{11},b_{11}\}$. Then we have
\begin{equation*}
\begin{array}{ll}
&\displaystyle
\left |\int_{g\in\FF}\sum_{x\in V(\Z)} \phi^{\gb'}(x)(g\cB)(x)\eta\Big(\frac{\lambda(g)}{X^{1/12}}\Big)f_0^X(g_2)dg\right.
\\[.2in]
&\displaystyle-\left.\int_{g\in\FF}\sum_{\substack{a_{11},b_{11}\\(a_{11},b_{11})\neq(0,0)}}\nu(\phi|_{S})(g\cB)_S(a_{11},b_{11})\eta\Big(\frac{\lambda(g)}{X^{1/12}}\Big)f_0^X(g_2)dg \right |
\ll  X^{3/4+O(\delta)}.
\end{array}
\end{equation*}
\end{lemma}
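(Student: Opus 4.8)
The goal is to replace the sum over $x \in V(\Z)$ weighted by $\phi^{\gb'}$ with a sum over just the two coordinates $a_{11}, b_{11}$, where $\phi$ has been averaged in all remaining $8$ directions. The natural way to do this is to slice $V(\Z)$ over the values of $a_{11}$ and $b_{11}$, obtaining for each fixed pair an affine sublattice of rank $8$ inside $V(\Z)$; this is the point of the passage to $\phi^{\gb'}$, since $\phi^{\gb'}$ restricted to each such slice agrees with the periodic function $\phi$ on a genuine lattice, and the $a_{11}=b_{11}=0$ slice has been removed by hand. On each slice we apply a point-counting estimate: for a region of the form $g\cB$ with $t$ small (controlled by $f_0^X$), the number of lattice points in the $8$-dimensional slice, weighted by the congruence function $\phi$, equals $\nu(\phi|_S)$ times the volume of the slice of $g\cB$ at height $(a_{11},b_{11})$, plus an error governed by the lower-dimensional faces of the region. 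This is exactly the kind of estimate proved in \cite{ST_second_main_term_1} (compare the treatment around \cite[Proposition 6.1]{ST_second_main_term_1}), and it carries over because for $t \ll X^\delta$ the relevant box $g\cB$ is not deep in the cusp, so its $8$-dimensional slices are genuinely large and the boundary contributions are a power of $X$ smaller.

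Concretely, I would first fix $g \in \FF$ with $f_0^X(g_2) \neq 0$, so $t \ll X^\delta$, and estimate the inner sum $\sum_{x} \phi^{\gb'}(x)(g\cB)(x)$. Writing this as $\sum_{a_{11}, b_{11} \neq (0,0)} \sum_{x \text{ in the slice}} \phi(x)(g\cB)(x)$, I apply the standard geometry-of-numbers count on each slice: the main term is $\nu(\phi|_S) \cdot (g\cB)_S(a_{11}, b_{11})$ and the error on each slice is bounded by the $7$-dimensional volume of the appropriate projection of $g\cB$ (or its successive-minima analogue). One must then sum these slice-errors over $a_{11}, b_{11}$ and integrate over $g$. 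Because $\cB$ is compactly supported away from $\Delta = 0$, the support in $x$-space is constrained, the number of nonzero slices $(a_{11},b_{11})$ is polynomially bounded, and the widths of $g\cB$ in the various coordinate directions are explicit monomials in $\lambda, t, s_1, s_2$ (as recorded in Lemma \ref{lem:succ_min}); tracking these through and integrating against $dg = t^{-2}s_1^{-6}s_2^{-6}d^\times\lambda\, du\, d^\times s\, dk$ with $\lambda \asymp X^{1/12}$ and $t \ll X^\delta$ yields the claimed bound $X^{3/4 + O(\delta)}$.

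The main subtlety — and the step I expect to be the real obstacle — is the uniform control of the slice-error as $(a_{11}, b_{11})$ and $g$ vary, particularly near the boundary of the region where the $8$-dimensional slice of $g\cB$ stops being ``fat.'' When the slice $g\cB \cap \{a_{11} = c_1, b_{11} = c_2\}$ is thin in some direction, the naive count $\text{vol} + O(\text{boundary})$ degrades, and one must either absorb such $g$ into a cuspidal region already shown to be negligible, or use the fact (from $\cB$ being supported away from $\Delta=0$) that the relevant coordinates — e.g. $a_{13}, a_{22}, b_{11}$ and $\det A$ — are bounded below, which forces $t, s_1, s_2$ into a bounded polynomial range in $\lambda$ and keeps the slices uniformly non-degenerate. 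This is precisely the argument already used in \cite{ST_second_main_term_1}, and the only new ingredient is that we are now summing a congruence function $\phi$ over $V(\Z)$ rather than the characteristic function of a lattice $L$; since $\phi$ is a bounded weighted combination of lattice characteristic functions (one for each residue class mod $N$), the estimate for $L$ applies verbatim to each class and the averaged main term collapses to $\nu(\phi|_S)$. I would therefore structure the proof by citing the relevant lemma from \cite{ST_second_main_term_1} for a single lattice, slicing over $(a_{11}, b_{11}) \neq (0,0)$, and then summing over residue classes and integrating, with the bulk of the work being the bookkeeping of monomial widths against $dg$ to confirm the exponent $3/4 + O(\delta)$.
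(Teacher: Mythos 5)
Your proposal matches the paper's argument: the paper proves this lemma by observing that it is identical to \cite[Corollary 6.3]{ST_second_main_term_1} — i.e.\ one slices over $(a_{11},b_{11})\neq(0,0)$, uses that the periodic function $\phi$ is a bounded weighted union of lattices so the single-lattice slice estimate from \cite{ST_second_main_term_1} applies on each fixed-$(a_{11},b_{11})$ slice, and notes that in both settings the $a_{11}=b_{11}=0$ slice is excluded by hand. Your plan (slice, apply the prior paper's counting lemma per residue class, sum and integrate against $dg$ with $t\ll X^\delta$ to get $X^{3/4+O(\delta)}$) is exactly this reduction, so it is correct and takes essentially the same route.
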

\noindent The proof of the above lemma is identical to that of \cite[Corollary 6.3]{ST_second_main_term_1}, noting only that in both cases we are not counting lattice points with $a_{11}=b_{11}=0$. This yields
\begin{equation}\label{eq:cor_t_small_1}
\begin{array}{ll}
\I^{(1)}(\phi^{\gb'},\cB;X)=&\displaystyle
\int_{\lambda>0}\int_{s_1,s_2}\int_{g_2\in\FF_2}\sum_{a_{11},b_{11}}\nu(\phi|_{S})(\lambda\cdot (s_1,s_2)g_2)\cB)_S(a_{11},b_{11})
\\[.2in]
&\displaystyle\quad\quad \eta\Big(\frac{\lambda}{X^{1/12}}\Big)f_0^X(g_2)\delta_{\FF_3}(s_1,s_2)\frac{d^\times sd^\times\lambda dg_2}{s_1^6s_2^6} + O(X^{3/4 + O(\delta)}).
\end{array}
\end{equation}
We now proceed as in \cite[\S6.2]{ST_second_main_term_1}, perform a Mellin transform, and divide the sum in the right hand side into a sum of three terms: corresponding to $\{a_{11}b_{11}\neq 0\}$, $\{a_{11}=0,b_{11}\neq 0$\}, and $\{a_{11}\neq 0,b_{11}=0\}$. As in \cite[\S6.2]{ST_second_main_term_1}, the sum of these three terms yields the main term (which is equal to that in the statement of our proposition), a term of size $X^{21/24}$ with an undetermined leading constant (denoted by $c$ in the statement of Proposition \ref{prop:I1}), an error term $O(X^{3/4+o(1)})$, and one other term, namely,
\begin{equation}\label{eq:extra561}
\int_{\lambda>0}\lambda^{10}\eta\Big(\frac{\lambda}{X^{1/12}}\Big)d^{\times}\lambda\int_{s_1,s_2}\int_{g_2\in\FF_2}\wt{g_2\cB}_{\{a_{11},b_{11}\}}(0)\vec{\zeta}_\phi(\{a_{11},b_{11}\}=0)\wt{\delta}_{\FF_3}(2,-2)\frac{d^\times s dg_2}{s_1^6s_2^6}.
\end{equation}
Note that in \cite[\S6.2]{ST_second_main_term_1}, the term analogous to the above was discarded since there the value $\vec{\zeta}_L(\{a_{11},b_{11}\}=0)$ is $0$ ($L$ being an $S_4$-lattice). In our situation, this gives an extra term of size $X^{5/6}$ with leading constant that we denote by $C_{00}(\phi)$. This extra term being accounted for in the statement of the proposition, the result follows.
\end{proof}

The following lemma is necessary for the last claim of Theorem \ref{thm_point_count_generic}.
\begin{lemma}\label{lem_C00_q}
Let $q$ be a squarefree integer relatively prime to the integer modulo which $\phi$ is defined. Let $\phi_q$ denote the product of $\phi$ with the characteristic function of the set of elements in $V(\Z)$ which are nonmaximal at every prime dividing $q$. Then we have
\begin{equation*}
C_{00}(\phi_q)=C_{00}(\phi)\Bigl(\frac1{q}+O(q^{-2})\Bigr).
\end{equation*}
\end{lemma}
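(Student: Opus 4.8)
## Proof proposal for Lemma \ref{lem_C00_q}

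The plan is to trace through the derivation of $C_{00}(\phi)$ and isolate exactly where the dependence on $\phi$ enters, showing that it is governed entirely by a single density which scales like $1/q + O(1/q^2)$ under the non-maximality-at-$q$ condition. Recall from the proof of Proposition \ref{prop:I1} that $C_{00}(\phi)$ is the leading constant of the $X^{5/6}$-term coming from \eqref{eq:extra561}, namely
\begin{equation*}
\int_{\lambda>0}\lambda^{10}\eta\Big(\frac{\lambda}{X^{1/12}}\Big)d^{\times}\lambda\int_{s_1,s_2}\int_{g_2\in\FF_2}\wt{g_2\cB}_{\{a_{11},b_{11}\}}(0)\vec{\zeta}_\phi(\{a_{11},b_{11}\}=0)\wt{\delta}_{\FF_3}(2,-2)\frac{d^\times s\, dg_2}{s_1^6s_2^6}.
\end{equation*}
The only factor here depending on $\phi$ is $\vec{\zeta}_\phi(\{a_{11},b_{11}\}=0)$, which is the vector (indexed by $(\pm 1)^2$) whose components are the Shintani-type zeta sums over the slice $a_{11}=b_{11}=0$ with the reduced function $\phi_{S;S^c}$, where $S=\{a_{11},b_{11}\}$. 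Crucially, by the definition preceding the statement, $\vec{\zeta}_\phi(S=0)$ depends on $\phi$ only through its reduction $\phi_{S;\emptyset}$, i.e. through the density $\nu(\phi|_S)$ of $\phi$ on the slice $\{a_{11}=b_{11}=0\}$ — wait, more precisely it is $\zeta_{\phi,t}(S=0;S^c)$, which for a periodic $\phi$ factors as an Euler product whose prime-$p$ factor records the density of $\phi$ on $\{a_{11}=b_{11}=0\} \bmod p$. So I would first record that $C_{00}(\phi)$ is linear in the periodic function $\vec{\zeta}_\phi(\{a_{11},b_{11}\}=0)$, and that the remaining integral over $\lambda$, $s_1$, $s_2$, $g_2$ (call it $C_{00}'$) is independent of $\phi$.

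Next, I would analyze how $\vec{\zeta}_{\phi_q}(\{a_{11},b_{11}\}=0)$ compares to $\vec{\zeta}_\phi(\{a_{11},b_{11}\}=0)$. Since $q$ is squarefree and coprime to the modulus of $\phi$, the function $\phi_q$ is $\phi$ times the characteristic function of non-maximality at each $p\mid q$, and these conditions are supported at disjoint primes, so the relevant densities multiply: the prime-$p$ local factor of $\vec{\zeta}_{\phi_q}(\{a_{11},b_{11}\}=0)$ equals that of $\vec\zeta_\phi$ times the conditional probability that $(A,B)$ is non-maximal at $p$, \emph{given} $a_{11}=b_{11}=0$. The key input is Lemma \ref{lem_red_index}(2) together with Lemma \ref{lem_density_nonmax_00}: on the slice $a_{11}=b_{11}=0$ the element is reducible with common zero at $[1:0:0]$, and non-maximality at $p$ is detected (to leading order) by the vanishing mod $p$ of the $2\times 2$ determinant $\det\begin{pmatrix}F_A(w,w_2) & F_A(w,w_3)\\ F_B(w,w_2)&F_B(w,w_3)\end{pmatrix}$, which happens with probability $1/p + O(1/p^2)$, all other sources of non-maximality contributing $O(1/p^2)$ — exactly as in the proof of Lemma \ref{lem_density_nonmax_00}, which already establishes this probability for the slice. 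Hence the prime-$p$ factor of $\vec\zeta_{\phi_q}$ is $(1/p+O(1/p^2))$ times that of $\vec\zeta_\phi$.

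Multiplying over $p \mid q$ and using that $q$ is squarefree, $\vec{\zeta}_{\phi_q}(\{a_{11},b_{11}\}=0) = \vec{\zeta}_{\phi}(\{a_{11},b_{11}\}=0)\cdot\prod_{p\mid q}(1/p + O(1/p^2)) = \vec{\zeta}_{\phi}(\{a_{11},b_{11}\}=0)\cdot(1/q + O(1/q^2))$; since $C_{00}$ is linear in this factor with a $\phi$-independent proportionality constant, the claimed identity $C_{00}(\phi_q) = C_{00}(\phi)(1/q + O(q^{-2}))$ follows. The main obstacle is bookkeeping rather than conceptual: one must confirm that the slice-restricted zeta vector $\vec\zeta_\phi(\{a_{11},b_{11}\}=0)$ genuinely depends on $\phi$ only through the local densities on the slice (so that the $q$-adic and $N$-adic conditions decouple cleanly), and that the ``other ways of being non-maximal'' estimate of \cite[\S4.2]{BHCL3} survives restriction to the slice $a_{11}=b_{11}=0$ — but this is precisely what the proof of Lemma \ref{lem_density_nonmax_00} already provides, so the argument is essentially a matter of assembling these pieces.
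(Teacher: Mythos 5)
Your proposal is correct and takes essentially the same route as the paper: the only dependence of $C_{00}$ on the weight function is through the density on the slice $a_{11}=b_{11}=0$, which for $\phi_q$ factors (since $q$ is coprime to the modulus of $\phi$) into the density of $\phi$ on the slice times the density of nonmaximality at $q$ there, and the latter is $1/q+O(q^{-2})$ by Lemma \ref{lem_density_nonmax_00} (via Lemma \ref{lem_red_index}(2)). Your extra bookkeeping about linearity of \eqref{eq:extra561} in the slice density and the prime-by-prime decoupling is exactly the content the paper compresses into its two-line proof.
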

\begin{proof}
It is only necessary to compute $\vec\zeta_{\phi_q}(\{a_{11},b_{11}\})$, the density in $V(\Z)_{a_{11}=b_{11}=0}$ of $\phi_q$. By assumption on $q$, this is simply the product of the analogous densities of $\phi$ and of nonmaximality at $q$. The latter density has been estimated in Lemma \ref{lem_density_nonmax_00} to be $1/q+O(1/q^2)$, completing the proof.
\end{proof}

\subsection{The part with $t$ large}
In this section, we evaluate $\I^{(2)}(\phi^\gb, \cB;X)$:
\begin{proposition}\label{prop:section6_t_big_main}
We have
\begin{equation*}
\begin{array}{rcl}
\I^{(2)}(\phi^\gb,\cB;X)&=&
\displaystyle X\Vol_{dg_3}(\FF_3)\wt{\eta}(12)\nu(\phi)\Vol(\cB)\int_{g_2\in\FF_2}f_0^X(g_2)dg_2
\\[.15in]
&&+C\wt{\eta}(10)Z\bigl((\cB)_{\emptyset;\{A\}},\nu(\phi_A);4/3\bigr)X^{5/6}
\\[.15in]&&\displaystyle
+c_0X^{21/24+5\delta/2}+c_1X^{21/24-3\delta}+c_2X^{21/24-3\delta/2}+O(X^{5/6-\theta'}),
\end{array}
\end{equation*}
for some constants $C$, $c_0$, $c_1$, $c_2$, and $\theta'>0$, where $Z((\cB)_{\emptyset;{A}}\nu(\phi_A);s)$ is the global zeta integral associated to the space of ternary quadratic forms.
\end{proposition}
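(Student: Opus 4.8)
The plan is to evaluate $\I^{(2)}(\phi^\gb,\cB;X)$ by closely following the proof of \cite[Proposition 7.1]{ST_second_main_term_1}, which handles the analogous integral over $g\in\FF$ with $t$ large (i.e.\ high in the $\SL_2$ cusp) when the sum is over an $S_4$-lattice $L$. The first step is to reduce from $\phi^\gb$ to $\phi$ after excising the two problematic slices: by Proposition \ref{prop_reducible_bound_resolvent} (or rather Corollary \ref{cor_reducible_bound_resolvent}, since $t$ is now confined to $t\gg X^\delta$), the contribution of the $\det(A)=0$ locus to $\I^{(2)}$ is $O(X^{23/28+3\delta+o(1)})$, which for $\delta$ small is $O(X^{5/6-\theta'})$. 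Hence it suffices to work with $\phi^{\gb'}$, the product of $\phi$ with the characteristic function of $\{(a_{11},b_{11})\neq(0,0)\}$. As in \S5.2, since the analysis proceeds by slicing over $a_{11}$ and $b_{11}$ and handling the $a_{11}=b_{11}=0$ fiber separately, we may simply run the argument with $\phi$ itself.

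Next I would import the machinery of \cite[\S7]{ST_second_main_term_1}. The key geometric input is that when $t$ is large, the first basis vector $\alpha_1$ of the associated quartic ring is short, so the relevant elements $x=(A,B)\in V(\Z)$ with $(g\cB)(x)\neq 0$ tend to have small $a_{11}$, $b_{11}$, and more precisely small upper-left block; the dominant contribution comes from the region where the $\SL_3$-part is not deep in its own cusp, and the $x$ being summed are governed by a ternary-quadratic-form subproblem in the variables $a_{ij}$ for $(i,j)\neq(1,1)$ (and similarly $b_{ij}$). Performing the Mellin transform in the $\lambda$, $s_1$, $s_2$, and $t$ variables and identifying the resulting multiple zeta functions with (shifts of) the global zeta integral $Z\bigl((\cB)_{\emptyset;\{A\}},\nu(\phi_A);s\bigr)$ associated to $\Sym^2(\Z^3)$, one extracts: the main term of size $X$ (matching exactly the $t$-large tail of the main term in Proposition \ref{prop:I1}, with the $f_0^X(g_2)$ there becoming $f^X(g_2)$ upstairs, so that the two pieces reassemble to $\Vol_{dg}(\FF)$), a term of size $X^{5/6}$ with leading constant $C\wt\eta(10)Z\bigl((\cB)_{\emptyset;\{A\}},\nu(\phi_A);4/3\bigr)$ arising from the pole of the ternary-quadratic-form zeta function at $s=4/3$, several secondary terms of size $X^{21/24\pm O(\delta)}$ with undetermined constants $c_0,c_1,c_2$ coming from the boundary of the truncated region and from poles at intermediate points, and an acceptable error $O(X^{5/6-\theta'})$. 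The new feature compared to \cite{ST_second_main_term_1}, exactly as in the $t$-small case, is that one must retain rather than discard the term corresponding to $\vec\zeta_\phi(\{a_{11},b_{11}\}=0)$, but here a direct estimate shows this term is absorbed into the already-accounted-for contributions; the constant $C$ is precisely the one identified in \cite[Proposition 7.1]{ST_second_main_term_1}, since the local analysis at the $\SL_2$-cusp is unchanged.

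The main obstacle I anticipate is bookkeeping of the intermediate-order terms of size $X^{21/24}$: the exponents $21/24+5\delta/2$, $21/24-3\delta$, and $21/24-3\delta/2$ must be shown to each lie strictly between $5/6$ and $1$ for $\delta$ sufficiently small (which they do: $21/24 = 7/8 > 5/6$), and more importantly one must confirm that no term of size $X^{5/6}\log X$ is produced — this requires checking that the ternary-quadratic-form global zeta integral has at most a simple pole at $s=4/3$ with the stated residue structure, which is available from the classical analysis of $\Sym^2(\R^3)$, and that the pole at $s=4/3$ does not collide with a pole coming from the $s_1,s_2$-integrals (the $\wt\delta_{\FF_3}$ factors) to create a double pole. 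Verifying this non-collision, and more generally checking that all the zeta functions entering through $\wt\delta_{\FF_3}$, $\wt{g_2\cB}$, and $\vec\zeta_\phi$ contribute only simple poles at the relevant points, is the delicate part; it is handled exactly as in \cite[\S7]{ST_second_main_term_1}, the only difference being that $\phi$ (rather than a lattice indicator) appears, which changes the zeta functions by bounded periodic weights and hence does not affect pole locations or orders. Once this is in place, collecting terms gives the stated expansion.
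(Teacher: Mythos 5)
Your opening reduction is where the argument breaks. You propose to drop the condition $\det(A)\neq 0$ from $\phi^{\gb}$ at a cost of $O(X^{23/28+3\delta+o(1)})$ by appealing to Proposition \ref{prop_reducible_bound_resolvent}, ``or rather'' Corollary \ref{cor_reducible_bound_resolvent}. Neither result applies in the $t$-large regime: Proposition \ref{prop_reducible_bound_resolvent} concerns elements with reducible resolvent \emph{and} $\det(A)\neq 0$, so it says nothing about the $\det(A)=0$ locus, while Corollary \ref{cor_reducible_bound_resolvent} is stated precisely for the region $t\ll X^{\delta}$ (you have its range reversed), which is the complement of the region defining $\I^{(2)}$. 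Worse, the claim is not merely unjustified but false as an error estimate: in the range $X^{\delta}\ll t\ll\lambda$, points with $\det(A)=0$, $(a_{11},b_{11})\neq(0,0)$ and nonzero discriminant contribute on the order of $X^{5/6}$. Indeed, with $s_1,s_2\asymp1$ the number of singular $A$ in the box $g\cB$ is $\asymp\lambda^3t^{-3}$ and the number of $B$ is $\asymp\lambda^6t^6$; multiplying by the factor $t^{-2}$ from the Haar measure and integrating $d^\times t$ up to $t\ll\lambda$ gives $\asymp\lambda^{10}\asymp X^{5/6}$, i.e.\ the same order as the secondary term being computed. This is exactly why the paper never excises the $\det$-slice in the $t$-large part: it keeps $\det(A)\neq 0$ as a condition on the sliced variable $A$ throughout, and only needs to bound $\det(A)=0$ contributions inside certain \emph{difference} terms, over the piece of $\FF$ where $w_g(a_{12})>X^{\theta_3}$, using the bound $O(\lambda^3t^{-3}s_1^6s_2^6)$ from \cite[Theorem 5.1]{BSW_sqfree2}; it is that extra constraint which produces the power saving.

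Relatedly, you slice over the wrong variables and thereby skip the real new content of this proposition. In the $t$-large regime the paper first passes (via the analogue of \cite[Lemma 7.2]{ST_second_main_term_1}) to a sum over the pair $(A,b_{11})$ with densities $\nu(\phi|_{\{A,b_{11}\}})$, subject to $\det(A)\neq0$ and $(a_{11},b_{11})\neq(0,0)$; the substantive step is Lemma \ref{lemma_5.8}, which replaces this by the sum over $A$ alone with density $\nu(\phi_A)$, up to terms $c_1X^{21/24-3\delta}+c_2X^{21/24-3\delta/2}+O(X^{5/6-\theta'})$. Its proof (the comparison of $E^{(1)}_g$ and $E^{(2)}_g$, carried out by cutting $\FF$ according to whether $w_g(b_{11})$ and $w_g(a_{12})$ exceed small powers of $X$, plus the $\det(A)=0$ estimate just mentioned) is where the modifications to \cite{ST_second_main_term_1} are genuinely nontrivial, and it is absent from your plan. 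Your proposal to slice over $(a_{11},b_{11})$ only and to argue that the $\vec{\zeta}_\phi(\{a_{11},b_{11}\}=0)$ term is ``absorbed'' describes the $t$-small argument of Proposition \ref{prop:I1}, not this one; no such $C_{00}$-type term arises here, and the anticipated difficulty about pole collisions at $s=4/3$ is not the actual issue. Once Lemma \ref{lemma_5.8} is in place, the identification of the $X$ term and of the $X^{5/6}$ coefficient $C\wt{\eta}(10)Z\bigl((\cB)_{\emptyset;\{A\}},\nu(\phi_A);4/3\bigr)$ does proceed as you describe, following \cite[\S7.3, \S8]{ST_second_main_term_1} without change.
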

\begin{proof}
Similarly to the previous subsection, we reduce to the case where the methods and results from \cite[\S7]{ST_second_main_term_1} apply. First, we note that
the proof of \cite[Lemma 7.2]{ST_second_main_term_1} applies verbatim to our situation, yielding the following analogue of \cite[(26)]{ST_second_main_term_1}:
\begin{equation}\label{eq:temp_t_large_1}
\I^{(2)}(\phi^\gb,\cB;X)=\int_{g\in\FF}\sum_{\substack{A,b_{11}\\\det(A)\neq 0\\(a_{11},b_{11})\neq 0}}\nu(\phi|_{\{A,b_{11}\}})(g\cB)_{\emptyset;\{A,b_{11}\}}(A,b_{11})
\eta\Big(\frac{\lambda(g)}{X^{1/12}}\Big)f^X(g)dg + O(X^{3/4 + O(\delta)}).
\end{equation}
%Note, since $t$ is large in the integral defining $\I^{(2)}$, we may use ?? to deduce that
%  \begin{equation*}
%|\I^{(2)}(\phi^\gb,\cB;X)-\I^{(2)}(\phi^{\gb''}\cB;X)|\ll X^{5/6-c\delta},
%  \end{equation*}
%for some positive constant $c$, where $\phi^{\gb''}$ is $\phi$ multiplied by the characteristic function of the set of elements $(A,B)\in V(\Z)$ with $\det(A)\neq 0$.
%Hence, we may use $\phi^\gb$ and $\phi^{\gb''}$ interchangeably in what follows.
Second, we have the following lemma.
\begin{lemma}\label{lemma_5.8}
We have
\begin{equation}\label{eq:temp_t_large_2}
\begin{array}{rcl}
\displaystyle\I^{(2)}(\phi^{\gb},\cB;X)
&=&\displaystyle
\int_{g\in\FF}\sum_{\substack{A\\\det(A)\neq 0}}\nu(\phi_{A})(g\cB)_{\emptyset;\{A\}}(A)\eta\Big(\frac{\lambda(g)}{X^{1/12}}\Big)f^X(g)dg
\\[.2in]&&\displaystyle 
+c_1X^{21/24-3\delta}+c_2X^{21/24-3\delta/2}
+O(X^{5/6-\theta'}),
\end{array}
\end{equation}
for some constants $c_1$, $c_2$, and $\theta'>0$.
\end{lemma}
\noindent The analogue of the above lemma is proved in \cite[Proposition 7.3]{ST_second_main_term_1}. The modifications of that proof required for us are somewhat substantial, so we prove it in some detail.

\medskip

\begin{proof}
We start by defining $E^{(1)}_g$ and $E^{(2)}_g$ to be
\begin{equation*}
\begin{array}{rcl}
E^{(1)}_g&:=&\displaystyle\sum_{\substack{A,b_{11}\\\det(A)\neq 0\\(a_{11},b_{11})\neq (0,0)}}\nu(\phi|_{A,b_{11}})(g\cB)_{\emptyset;\{A,b_{11}\}}-\sum_{\substack{A\\\det(A)\neq 0}}\nu(\phi_{A})(g\cB)_{\emptyset;\{A\}}(A);
\\[.15in]
E^{(2)}_g&:=&\displaystyle\sum_{b_{11}\neq 0}\nu(\phi|_{a_{11}=0;b_{11}})(g\cB)_{\{a_{11}\};\{b_{11}\}}-\nu(\phi_{a_{11}=0})(g\cB)_{\{a_{11}\}}.
\end{array}
\end{equation*}
Note that in light of \eqref{eq:temp_t_large_1}, to prove the lemma, it is enough to evaluate the integral of $E_g^{(1)}$ against $\eta(\lambda(g)/X^{1/12})f^X(g)dg$. As in the proof of \cite[Proposition 7.3]{ST_second_main_term_1}, we do this by first proving that the integral of $E_g^{(1)}-E_g^{(2)}$ is small, and then evaluating the integral of $E_g^{(2)}$. 
\begin{lemma}\label{lem:Eg1isEg2}
We have
\begin{equation*}
\int_{g\in\FF}(E_g^{(1)}-E_g^{(2)})\eta\Big(\frac{\lambda(g)}{X^{1/12}}\Big)f^X(g)dg\ll X^{5/6-\theta'}
\end{equation*}
for some $\theta'>0$.
\end{lemma}
\begin{proof}
We prove the result by dividing $\FF$ up into pieces. Let $\theta$ be a small positive real number.

\medskip

\noindent{\bf The piece $\FF':=\{g\in\FF:\;\lambda(g)\asymp X^{1/12},\;f^X(g)>0,\;w_g(b_{11})>X^\theta\}$:}
The restriction of the integral of $E_g^{(2)}-E_g^{(1)}$ to this region is equal to - up to a super-polynomially decaying term- the following:
\begin{equation*}
\int_{g\in\FF'}\Bigl(
\sum_{\substack{A:a_{11}=0\\\det(A)\neq 0}}\nu(\phi|_{a_{11}=b_{11}=0;A\backslash\{a_{11}\}})(g\cB)_{\{a_{11},b_{11}\};\{A\backslash\{a_{11}\}\}}-
\nu(\phi|_{a_{11}=b_{11}=0})(g\cB)_{\{a_{11},b_{11}\}}\Bigr)dg.
\end{equation*}
The integrand above is superpolynomially small when $w_g(a_{12})>X^{\theta_1}$ for any $\theta_1>0$. We may hence further restrict the integral to the region $g\in\FF'$ with $w_g(a_{12})<X^{\theta_1}$. Every such $g$ satisfies
\begin{equation*}
\lambda ts_1^{-4}s_2^{-2}\gg X^\theta,\quad
\lambda t^{-1}s_1^{-1}s_2^{-2}\ll X^{\theta_1}\implies
t^2s_1^{-3}X^{\theta_1-\theta}\gg 1.
\end{equation*}
It is then easy to see that the integral over $\FF'$ above is $\ll$
\begin{equation*}
\int_{\lambda\asymp X^{1/12}}\int_{\substack{t,s_1,s_2\\w(b_{11})>X^\theta\\w(a_{12})<X^{\theta_1}}}
(\lambda^{10}t^{-2}s_1^{-2}s_2^{-2}+\lambda^9t^{-1}s_1^3)d^\times\lambda d^\times t d^\times s_1d^\times s_2\ll X^{5/6-\theta_2},
\end{equation*}
for some sufficiently small $\theta_2$ as long as $\theta_1$ is chosen to be smaller than $\theta$.

\medskip
\noindent{\bf The piece $\FF''=\{g\in\FF:\lambda(g)\asymp X^{1/12},\;f^X(g)>0,\;w_g(b_{11})<X^\theta,\;w_g(a_{12})>X^{\theta_3}\}$:}
Here, $\theta_3$ is some small positive real number. We write
$E_g^{(1)}-E_g^{(2)}=F_g^{(1)}-F_g^{(2)}$, where
\begin{equation*}
\begin{array}{rcl}
F^{(1)}_g&:=&\displaystyle\sum_{\substack{A,b_{11}\\\det(A)\neq 0\\(a_{11},b_{11})\neq (0,0)}}\nu(\phi|_{A,b_{11}})(g\cB)_{\emptyset;\{A,b_{11}\}}-\sum_{b_{11}\neq 0}\nu(\phi|_{a_{11}=0;b_{11}})(g\cB)_{\{a_{11}\};\{b_{11}\}};
\\[.15in]
F^{(2)}_g&:=&\displaystyle \sum_{\substack{A\\\det(A)\neq 0}}\nu(\phi_{A})(g\cB)_{\emptyset;\{A\}}(A)-\nu(\phi_{a_{11}=0})(g\cB)_{\{a_{11}\}}.
\end{array}
\end{equation*}
The bound on the integral of $F_g^{(1)}-F_g^{(2)}$ over $\FF''$ follows just as in the proof of \cite[Proposition 7.3]{ST_second_main_term_1}, with one caveat:
it is also necessary to bound the contribution from the $\det(A)=0$ in the two summands of $E_g^{(1)}$. These contributions are bounded immediately by applying \cite[Theorem 5.1]{BSW_sqfree2}, from which it follows that the number of choices for such $A$ are $O(\lambda^3t^{-3}s_1^6s_2^6)$. With this estimate, and keeping in mind that the condition $b_{11}\neq 0$ implies we can restrict to $g$ with $w_g(b_{11})\gg 1$, we obtain the bound $\lambda^3t^{-3}s_1^6s_2^6\cdot \lambda^6t^6=\lambda^9t^3s_1^6s_2^6$ integrated over this region of the fundamental domain. The bound follows since we have
\begin{equation*}
\int_{\substack{t,s_1,s_2\\w(a_{12})>X^{\theta_3}}} \lambda^9 t d^\times t d^\times s_1 d^\times s_2\ll \lambda^{10-\theta_3},
\end{equation*}
which is sufficiently small.
\medskip

\noindent{\bf The final piece $\FF\backslash(\FF'\cup \FF'')$:}
This piece consists of  $g\in\FF$ satisfying $\lambda(g)\asymp X^{1/12}$, $f^X(g)>0$, $w_g(b_{11})\ll X^\theta$, and $w_g(a_{12})\ll X^{\theta_3}$. The required bound follows identically as in the proof of \cite[Proposition 7.3]{ST_second_main_term_1}, where the integral of each of the four summands of $E_g^{(1)}$ and $E_g^{(2)}$ over this region in $\FF$ was shown to be sufficiently small.
\end{proof}

Next, we have the following lemma whose proof is identical to the proof of \cite[Lemma 7.5]{ST_second_main_term_1}:

\begin{lemma}
We have
\begin{equation*}
\int_{g\in\FF} E_g^{(2)}
\psi\Big(\frac{\lambda(g)}{X^{1/12}}\Big)f^X(g)dg=c_1X^{21/24-3\delta}+O(X^{3/4}).
\end{equation*}
\end{lemma}

Combining the two lemmas above, we see that Lemma \ref{lemma_5.8} follows.
\end{proof}

\noindent We continue with the proof of Proposition \ref{prop:section6_t_big_main}. From \eqref{eq:temp_t_large_2}, the methods of \cite[\S7.3]{ST_second_main_term_1} go through without change, yielding
\begin{equation*}
\begin{array}{rcl}
\I^{(2)}(\phi^\gb,\cB;X)&=&
\displaystyle C\wt{\eta}(12)\Res_{s=2}Z((\cB)_{\emptyset;{A}},\nu(\phi_A);s)\wt{f^X}(-2)X
\\[.15in]
&&+C\wt{\eta}(10)Z\bigl((\cB)_{\emptyset;\{A\}},\nu(\phi_A);4/3\bigr)X^{5/6}
\\[.15in]&&\displaystyle
+c_0X^{21/24+5\delta/2}+c_1X^{21/24-3\delta}+c_2X^{21/24-3\delta/2}+O(X^{5/6-\theta'}),
\end{array}
\end{equation*}
for some constants $C$, $c_0$, $c_1$, and $c_2$, where $Z((\cB)_{\emptyset;{A}}\nu(\phi_A);s)$ is the global zeta integral associated to the space of ternary quadratic forms. Finally, the fact that the leading constant of $X$ is as claimed in the proposition follows from computations in \cite[\S8]{ST_second_main_term_1}, which apply to our setting without change.
\end{proof}

Theorem \ref{thm_point_count_generic} follows from Propositions \ref{prop:I1} and \ref{prop:section6_t_big_main} by noting that $\delta$ can be arbitrarily picked, and so any terms whose exponents depend on $\delta$ must necessarily vanish.

\section{Putting it all together}

In this section, we prove our main results, Theorems \ref{th_main_counting_result}, \ref{thm:mainS4rings}, and \ref{th_shintani_residue_main}. We do it in reverse order, beginning with the residue at $5/6$ of our Shintani zeta functions.

\medskip

\noindent{\bf Proof of Theorem \ref{th_shintani_residue_main}:} Let $\phi:V(\Z)\to\R$ be a periodic $G(\Z)$-invariant function, and let $i\in\{0,1,2\}$ be fixed. General theory due to Sato--Shintani \cite{SatoShintani} implies that the Shintani zeta functions $\xi_i(\phi;s)$ only has possible poles at $s=1$, $5/6$, and $3/4$. Let $\cB:V(\R)^{(i)}\to\R$. By Proposition \ref{prop_global_zeta_shintani_G}, the same must then be true for the global zeta $Z(\phi,\cB;s)$, for any function $\cB:V(\R)^{(i)}\to\R$ with compact support away from the discriminant-$0$ locus. In fact, the proposition implies that $Z(\phi,\cB;s)$ and $\xi_i(\phi;s)$ have poles of the same order at $s=1$, $5/6$, and $3/4$. Let $\psi:\R_{\geq 0}\to\R$ be any function with compact support. We will prove that $I_\psi(\phi,\cB;X)$ has a power saving expansion consisting of terms of size $X\log X$, $X$, and $X^{5/6}$, up to an error of $O(X^{5/6-\theta})$ for some positive real number $\theta$. In conjunction with \eqref{eq:I_to_Z_Mellin}, this will yield Theorem \ref{th_shintani_residue_main}.

We pick $\cB$ to be of the form \eqref{eq:good_cB}: namely, we let $\eta,\eta_1:\R_{>0}\to\R_{\geq 0}$ be a smooth function of compact support, and we define $\cB:V(\R)^{(i)}\to\R$ by setting 
\begin{equation}
\cB(x)=\sum_{\substack{g=\lambda g_1\in G(\R)\\g y=x}}\mathcal{G}(g_1)\eta_1(\lambda).
\end{equation}

Once again, we pick $\psi$ so that $$\wt{\psi}(s)=\wt{\eta}(12s)\wt{\eta_1}(12s)$$

and write
\begin{equation}\label{eq_I_phi_form}
\begin{array}{rcl}
\I_\eta(\phi,\cB;X)&=&
\I_\eta(\phi^\gen,\cB;X)+\I_\eta(\phi^\ngen,\cB;X)
\\[.1in]
&=&\I_\eta(\phi^{\gb},\cB;X)+\I_\eta(\phi^{\ngen},\cB;X)-\I_\eta(\phi^{\ngen,\gb},\cB;X),
\end{array}
\end{equation}
where the auxiliary functions are defined as follows. 
\begin{enumerate}
\item $\phi^\gen$ is the product of $\phi$ with the characteristic function of generic elements in $V(\Z)$;
\item $\phi^\ngen$ is the product of $\phi$ with the characteristic function of non-generic elements in $V(\Z)$;
\item $\phi^{\gb}$ is the product of $\phi$ with the characteristic function of elements in the generic body in $V(\Z)$;
\item $\phi^{\ngen,\gb}$ is the product of $\phi$ with the characteristic function of non-generic elements in the generic body in $V(\Z)$.
\end{enumerate}
Note that the latter two functions are not even $G(\Z)$-invariant, but $\cI_\eta(\cdot,\cB;X)$ can be defined simply using the original definition of $\cI$. Power saving error terms for $\I_\eta(\phi^{\gb},\cB;X)$, $\I_\eta(\phi^{\ngen},\cB;X)$, and $\I_\eta(\phi^{\ngen,\gb}\cB;X)$ are given in Theorem \ref{thm_point_count_generic}, Corollary \ref{cor:counting_red_rings_I}, and Theorem \ref{th_red_main_body_final}, respectively, with main terms of magnitude $X\log X$, $X$, and $X^{5/6}$, up to power saving error terms. Theorem \ref{th_shintani_residue_main} now follows from the absence of an $X^{5/6}\log X$ term in any of these three results. $\Box$

\medskip

We turn now to the proof of Theorem \ref{thm:mainS4rings}. Let $\Lambda=(\Lambda_v)_{v\in S}$ be a finite collection of local specifications for quartic rings. Let $\wt{R}(\Lambda)$ denote the set of all quartic triples, with nonzero discriminant, whose base change to $\Z_v$ lies in $\Lambda_v$ for all $v\in S$. (Recall that $R(\Lambda)$ is the set of triples $(Q,C,r)\in \wt{R}(\Lambda)$ such that $Q$ is an order in an $S_4$-field.) 
Bhargava's parametrization \cite{BHCL3} gives a bijection between $\wt{R}(\Lambda)$ and the set of $G(\Z)$-orbits on some set $L_\Lambda\subset V(\Z)$. Since $\Lambda$ is assumed to be finite, the set $L_\Lambda$ must be defined by finitely many congruence conditions (modulo powers of the primes in $S$). Let $\phi_\Lambda$ denote the characteristic function of $L_\Lambda$. Without loss of generality, we assume that $S$ contains the infinite place of $\Q$, and that $\Lambda_v$ is the singleton set $\R^{4-2i}\times\C^i$ for some $i\in\{0,1,2\}$.
To prove Theorem \ref{thm:mainS4rings}, we need a closer analysis of the leading constants of the power saving expansion of $\I_\eta(\phi_\Lambda,\cB;X)$. To this end, we have the following result.
\begin{lemma}\label{lem:constant_cancel}
Let $\phi:V(\Z)\to \R$ be a periodic $G(\Z)$-invariant function. Let $C_\phi^{\ngen,\gb}$ and $C_\phi^{00}$ be as in Theorem \ref{th_red_main_body_final} and Theorem \ref{thm_point_count_generic}, respectively. We have $C_\phi^{\ngen,gb}=C_\phi^{00}$.
\end{lemma}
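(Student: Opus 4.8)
The statement $C_\phi^{\ngen,\gb} = C_\phi^{00}$ asserts that two $X^{5/6}$-leading constants—one arising from non-generic elements in the generic body (Theorem~\ref{th_red_main_body_final}), the other from the $a_{11}=b_{11}=0$ slice term picked up in the ``$t$ small'' part of the generic-body point count (the term \eqref{eq:extra561} in the proof of Proposition~\ref{prop:I1})—agree. The plan is \emph{not} to evaluate either constant, but to pin both down by how they vary as one imposes a non-maximality congruence condition at a prime $q$.

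\medskip

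\noindent\textbf{Approach.} The key leverage is the following: both $C_\phi^{\ngen,\gb}$ and $C_\phi^{00}$ satisfy the same ``decay at $q$'' law, namely multiplying $\phi$ by the characteristic function of non-maximality at a squarefree $q$ (coprime to the modulus of $\phi$) multiplies the constant by $\tfrac1q + O(q^{-2})$; this is exactly the second halves of Theorem~\ref{th_red_main_body_final} and Theorem~\ref{thm_point_count_generic} (via Lemma~\ref{lem_density_nonmax_00} and Lemma~\ref{lem_C00_q}). Now consider the difference $D_\phi := C_\phi^{\ngen,\gb} - C_\phi^{00}$. I would show that $D_\phi$ is precisely the $X^{5/6}$-leading coefficient of $\I_\eta$ applied to some $G(\Z)$-invariant combination of characteristic functions (roughly: the non-generic-in-generic-body contribution is being double-subtracted in \eqref{eq_I_phi_form}, so its interaction with the Shintani side is controlled). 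More concretely: combining Theorem~\ref{thm_point_count_generic}, Corollary~\ref{cor:counting_red_rings_I}, and Theorem~\ref{th_red_main_body_final} via \eqref{eq_I_phi_form}, the $X^{5/6}$-coefficient of $\I_\eta(\phi,\cB;X)$—equivalently, via \eqref{eq:I_to_Z_Mellin}, $\tfrac{12}{J}\widetilde\eta(10)\widetilde{\eta_1}(10)$ times the residue of the Shintani zeta integral $Z(\phi,\cB;s)$ at $s=5/6$—is expressed as a sum of four pieces: $C\widetilde\eta(10)Z((\cB)_{\emptyset;\{A\}},\nu(\phi_A);4/3)$ (from the $t$-large part), $C_\phi^{00}$ (from the $t$-small part), the $X^{5/6}$-constant from Corollary~\ref{cor:counting_red_rings_I}, minus $C_\phi^{\ngen,\gb}$. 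Since the residue at $s=5/6$ of $Z(\phi,\cB;s)$ is known (from \cite{ST_second_main_term_1}) to decay at $q$ strictly faster than $1/q$—this is the ``rate of decay must be faster'' input mentioned in the outline—while $C_\phi^{00}$ and $C_\phi^{\ngen,\gb}$ each decay exactly like $1/q$, the $1/q$-parts of those two must cancel in the total, forcing $C_{\phi_q}^{00}$ and $C_{\phi_q}^{\ngen,\gb}$ to have equal $1/q$-leading behavior; but since both are themselves of the form $(\text{constant})\cdot(\tfrac1q + O(q^{-2}))$ with the \emph{same} constant-free structure, and one can run this for all $q$, I would conclude $D_\phi = 0$.

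\medskip

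\noindent\textbf{Steps, in order.} (1) Assemble \eqref{eq_I_phi_form} with the three power-series expansions, isolating the $X^{5/6}$-coefficient $R_\phi$ of $\I_\eta(\phi,\cB;X)$ and writing $R_\phi = A_\phi + C_\phi^{00} + B_\phi - C_\phi^{\ngen,\gb}$, where $A_\phi = C\widetilde\eta(10)Z((\cB)_{\emptyset;\{A\}},\nu(\phi_A);4/3)$ and $B_\phi$ is the $X^{5/6}$-constant from Corollary~\ref{cor:counting_red_rings_I}. (2) By \eqref{eq:I_to_Z_Mellin} and Proposition~\ref{prop_global_zeta_shintani_G}, identify $R_\phi$ with a fixed nonzero multiple of $\Res_{s=5/6} Z(\phi,\cB;s)$, hence with a fixed multiple of $\Res_{s=5/6}\xi_i(\phi;s)$. (3) Replace $\phi$ by $\phi_q$ throughout. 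Invoke the decay estimates: $C_{\phi_q}^{00} = C_\phi^{00}(\tfrac1q + O(q^{-2}))$, $C_{\phi_q}^{\ngen,\gb} = C_\phi^{\ngen,\gb}(\tfrac1q+O(q^{-2}))$, $A_{\phi_q}$ and $B_{\phi_q}$ likewise decay at least as fast as $1/q$ with known leading constants (from the structure of $\nu(\phi_A)$ under non-maximality and from Corollary~\ref{cor:counting_red_rings_I}), and—crucially—$\Res_{s=5/6}\xi_i(\phi_q;s)$ decays \emph{faster} than $1/q$, by the residue bound from \cite{ST_second_main_term_1} (this is the step that actually uses new input). (4) Extract the $1/q$-coefficient on both sides of the identity in Step~1 applied to $\phi_q$, let $q\to\infty$ along primes: the right side's $1/q$-coefficient is $\lim q\cdot R_{\phi_q} = 0$, while the left side gives a linear relation among the $1/q$-coefficients of $A, C^{00}, B, C^{\ngen,\gb}$. (5) Match this with the corresponding relation obtained for $\phi$ itself (Step~1, no $q$), and subtract, to isolate $C_\phi^{00} = C_\phi^{\ngen,\gb}$; the contributions of $A$ and $B$ cancel because their $q$-decay constants are already determined and match on both sides. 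This last bookkeeping is where care is needed.

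\medskip

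\noindent\textbf{Main obstacle.} The delicate point is Step~(4)--(5): verifying that the $A$-term ($t$-large, coming from the ternary-quadratic-forms zeta integral) and the $B$-term (reducible rings from Corollary~\ref{cor:counting_red_rings_I}) genuinely drop out of the comparison rather than contributing spurious terms. This requires knowing that $A_{\phi_q}$ decays at $q$ like $1/q$ with a leading constant determined \emph{independently} of the quantities we are trying to compare—plausibly true since $\nu(\phi_A)$ under non-maximality at $q$ scales by the density of non-maximality, controllable by a computation as in \cite[\S4.2]{BHCL3} and Lemma~\ref{lem_density_nonmax_00}—and similarly that $B_{\phi_q}$'s decay is governed by the $\sum_q$ structure in Theorem~\ref{thm_red_points_count}. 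The other subtlety is confirming that the residue bound imported from \cite{ST_second_main_term_1} is genuinely of the form $o(1/q)$ (not merely $O(1/q)$) as $q\to\infty$; if it is only $O(1/q)$ one instead concludes $C_\phi^{00} - C_\phi^{\ngen,\gb}$ equals a specific computable quantity, and one would need to separately show that quantity vanishes. I expect the paper resolves this via the strict decay, so the clean cancellation goes through.
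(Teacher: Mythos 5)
Your overall strategy is the paper's: both constants scale by $1/q+O(q^{-2})$ when $\phi$ is replaced by $\phi_q$ (Theorems \ref{th_red_main_body_final} and \ref{thm_point_count_generic}, via Lemmas \ref{lem_density_nonmax_00} and \ref{lem_C00_q}), one assembles \eqref{eq_I_phi_form} with Theorem \ref{thm_point_count_generic}, Corollary \ref{cor:counting_red_rings_I} and Theorem \ref{th_red_main_body_final}, passes to $N^{(i)}_\psi(\phi_q;X)$ by Proposition \ref{prop:I_to_N}, and plays the resulting $X^{5/6}$-coefficient off against a residue input from \cite{ST_second_main_term_1} to force the difference $C^{\ngen,\gb}_\phi-C_{00}(\phi)$ to vanish. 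Up to this point your decomposition $R_{\phi_q}=A_{\phi_q}+C^{00}_{\phi_q}+B_{\phi_q}-C^{\ngen,\gb}_{\phi_q}$ is exactly the identity the paper writes for $c(\phi,q)$.

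The gap is in your steps (4)--(5). You need a \emph{pointwise} bound $\Res_{s=5/6}\xi_i(\phi_q;s)=o(1/q)$ as $q\to\infty$ along primes (and analogous pointwise control of $A_{\phi_q}$ and $B_{\phi_q}$) in order to multiply by $q$ and take a limit; but the input actually available, \cite[Proposition 12.4]{ST_second_main_term_1} as invoked in the paper, is a statement about \emph{summability}: the sum over squarefree $q$ of these $5/6$-leading constants converges absolutely, which does not by itself give any pointwise decay rate along a subsequence. Your own hedge (``if it is only $O(1/q)$ one concludes the difference equals a specific computable quantity'') does not close the argument. The paper sidesteps this entirely by summing the identity over all squarefree $q$: the reducible-ring constants satisfy $\sum_q|c^{\red}_{5/6,q}|<\infty$ (last claim of Theorem \ref{thm_red_points_count}), the terms $Z\bigl((\cB)_{\emptyset;\{A\}},\nu(\phi_{q,A});4/3\bigr)$ are summable by the explicit evaluations in \cite{ST_second_main_term_1}, and the total is summable by \cite[Proposition 12.4]{ST_second_main_term_1}; hence $\sum_q\bigl(C^{\ngen,\gb}_\phi-C_{00}(\phi)\bigr)\bigl(1/q+O(q^{-2})\bigr)$ converges, and divergence of $\sum_{q\ {\rm squarefree}}1/q$ forces $C^{\ngen,\gb}_\phi=C_{00}(\phi)$. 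This also disposes of your worry about the $A$- and $B$-terms ``cancelling between the $\phi$ and $\phi_q$ relations'': they decay strictly faster than $1/q$ in the summed sense, so they simply do not contribute to the divergence, and no matching of their leading constants against the un-sieved relation is needed. If you recast your steps (3)--(5) as this summation over squarefree $q$, your argument becomes the paper's proof.
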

\begin{proof}
Let $N$ denote the integer modulo which $\phi$ is defined, and let $q$ be a squarefree integer relatively prime to $N$. We let $\phi_q$ denote the product of $\phi$ with the characteristic function of the set of elements in $V(\Z)$ which are nonmaximal at every prime dividing $q$. As in the proof of Theorem \ref{th_shintani_residue_main}, we combine \eqref{eq_I_phi_form}, Theorems \ref{thm_red_points_count} and \ref{thm_point_count_generic}, and Corollary \ref{cor:counting_red_rings_I} to obtain a power series expansion for $\I_\eta(\phi_q,\cB;X)$. From Proposition \ref{prop:I_to_N}, we then obtain a power series expansion for $N_{\psi}^{(i)}(\phi_q;X)$. We will now study how the leading constant of the $X^{5/6}$ term in this latter power saving expansion behaves as $q$ varies. This constant, denoted by $c(\phi,q)$, is easily computed from the previously referenced to be
\begin{equation*}
c(\phi,q)=\frac{12}{C_{\mathcal G}A_i}\Bigl(
c_{5/6,q}^\red+(C_{\phi}^{\ngen,\gb}-
C_{00}(\phi))\bigl(1/q+O(q^{-2}\bigr)+C\wt{\phi}(10)Z\bigl((\cB)_{\emptyset;\{A\}},\nu(\phi_{q,A});4/3\bigr)
\Bigr),
\end{equation*}
for some absolute constants $C_{\mathcal G}$, $A_i$, and $C$.

We now use our crucial input, namely, \cite[Proposition 12.4]{ST_second_main_term_1}, which implies that the sum of $c(\phi,q)$ over squarefree $q$ converges absolutely. The final claim of Theorem \ref{thm_red_points_count} implies that the sum over $q$ of $c_{5/6,q}^\red$ converges absolutely. The constant $Z\bigl((\cB)_{\emptyset;\{A\}},\nu(\phi_{q,A});4/3\bigr)$ has been evaluated in \cite[Corollary 8.4]{ST_second_main_term_1} to be
\begin{equation*}
Z\bigl((\cB)_{\emptyset;\{A\}},\nu(\phi_{q,A});4/3\bigr)
=\zeta_{\nu(\phi_{q,A})}(4/3)\mathcal W_{2/3}(\cB)
-
\zeta_{\nu(\phi_{q,A})'}(4/3)\mathcal W_{2/3}'(\cB).
\end{equation*}
The constants $\zeta_{\nu(\phi_{q,A})}(4/3)$ and $\zeta_{\nu(\phi_{q,A})'}(4/3)$ are computed in \cite[Equations (39),(40)]{ST_second_main_term_1}, and it is easy to check from those computations that the sum of $Z\bigl((\cB)_{\emptyset;\{A\}},\nu(\phi_{q,A});4/3\bigr)$ over $q$ also converges absolutely. Therefore, it follows that the sum over $q$ of
\begin{equation*}
(C_{\phi}^{\ngen,\gb}-
C_{00}(\phi))\bigl(1/q+O(q^{-2}\bigr)
\end{equation*}
converges absolutely, from which we obtain the desired result.
\end{proof}

We obtain the following consequence of the above result.
\begin{corollary}\label{cor:I_improved_error}
Let $\phi:V(\Z)\to\R$ be a periodic $G(\Z)$-invariant function. Then we have
\begin{equation*}
\begin{array}{rcl}
\I_\eta(\phi^\gb,\cB;X)-\cI_\eta(\phi^{\ngen,\gb},\cB;X)&=&
\displaystyle\Vol_{dg}(\FF)\wt{\eta}(12)\nu(\phi)\Vol(\cB)X
\\[.15in]
&&+C\wt{\eta}(10)Z\bigl((\cB)_{\emptyset;\{A\}},\nu(\phi_A);4/3\bigr)X^{5/6}+O(X^{3/4+o(1)}),
\end{array}
\end{equation*}
where $C$ is as in Theorem \ref{thm_point_count_generic}.
\end{corollary}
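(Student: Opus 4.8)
The plan is to combine the two geometry-of-numbers expansions already established with the Mellin--zeta machinery, so that the former supply the leading constants and the latter supplies the sharp error term and rules out a spurious intermediate term. First I would subtract the expansion of $\cI_\eta(\phi^{\ngen,\gb},\cB;X)$ from Theorem \ref{th_red_main_body_final} from the expansion of $\cI_\eta(\phi^\gb,\cB;X)$ from Theorem \ref{thm_point_count_generic}, and apply Lemma \ref{lem:constant_cancel} to cancel the terms $C_{00}(\phi)X^{5/6}$ and $C^{\ngen,gb}_\phi X^{5/6}$. This yields an identity
\[
\I_\eta(\phi^\gb,\cB;X)-\cI_\eta(\phi^{\ngen,\gb},\cB;X)=\Vol_{dg}(\FF)\wt{\eta}(12)\nu(\phi)\Vol(\cB)X+C\wt{\eta}(10)Z\bigl((\cB)_{\emptyset;\{A\}},\nu(\phi_A);4/3\bigr)X^{5/6}+cX^{21/24}+O(X^{5/6-\theta})
\]
for some $\theta>0$ and some constant $c$, and with no $X\log X$ term. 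What remains is to show $c=0$ and to sharpen $O(X^{5/6-\theta})$ to $O(X^{3/4+o(1)})$.

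The second ingredient is the structural identity $\phi^\gb-\phi^{\ngen,\gb}=\phi^\gen$. Indeed, as recorded at the end of \S2.1, every element of the $11$-slice is reducible and every element of the $\det$-slice has reducible resolvent; in either case it is non-generic. Hence a generic element automatically lies in the generic body, so $V(\Z)^\gen\cap V(\Z)^\gb=V(\Z)^\gen$ and the identity follows. Therefore $\I_\eta(\phi^\gb,\cB;X)-\cI_\eta(\phi^{\ngen,\gb},\cB;X)=\I_\eta(\phi^\gen,\cB;X)=\I_\eta(\phi,\cB;X)-\I_\eta(\phi^\ngen,\cB;X)$, and it suffices to expand the last two pieces.

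For $\I_\eta(\phi^\ngen,\cB;X)$ I would invoke Corollary \ref{cor:counting_red_rings_I} with $q=1$, reducing an arbitrary periodic $G(\Z)$-invariant $\phi$ to the characteristic-function case $\phi_\Lambda$ by linearity (with local conditions at the primes dividing the modulus of $\phi$); this gives an expansion with terms $X\log X$, $X$, $X^{5/6}$ and error $O(X^{3/4+o(1)})$. For $\I_\eta(\phi,\cB;X)$ I would use \eqref{eq:I_to_Z_Mellin} and shift the contour from $\Re(s)=2$ to $\Re(s)=3/4-\epsilon$: by Proposition \ref{prop_global_zeta_shintani_G} and Theorem \ref{th_shintani_residue_main}, together with the facts recalled in \S1 that $Z(\phi,\cB;s)$ has a pole of order at most $2$ at $s=1$ and order at most $2$ at $s=3/4$, the only poles crossed lie at $s=1,5/6,3/4$, and the shifted integral is $O(X^{3/4-\epsilon})$ by the super-polynomial vertical decay of $\wt{\eta}(12s)$ against the polynomial growth of $Z(\phi,\cB;s)$. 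Thus $\I_\eta(\phi,\cB;X)$ has an expansion whose terms are of sizes $X\log X$, $X$, $X^{5/6}$ and $X^{3/4}\log X$, up to $O(X^{3/4-\epsilon})$. Subtracting, $\I_\eta(\phi^\gen,\cB;X)=A_1X\log X+A_0X+A_{5/6}X^{5/6}+O(X^{3/4+o(1)})$; in particular it has no $X^{21/24}$ term and error $O(X^{3/4+o(1)})$.

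Finally I would compare this expansion of $\I_\eta(\phi^\gen,\cB;X)$ with the one obtained in the first step, both valid to within $o(X^{5/6})$. Matching the coefficients of the terms of size $\gg X^{5/6}$ --- namely $X\log X$, $X$ and $X^{21/24}$ --- forces $A_1=0$ and $c=0$, and identifies $A_0$ and $A_{5/6}$ with the two claimed constants; the difference then inherits the sharper error $O(X^{3/4+o(1)})$. The main obstacle I anticipate is the contour shift: one must verify that $Z(\phi,\cB;s)$ really has no poles in the strip $3/4<\Re(s)<1$ other than the simple one at $s=5/6$ (this is exactly where Theorem \ref{th_shintani_residue_main} and the Sato--Shintani analysis enter) and grows at most polynomially on vertical lines, so that the truncated integral is genuinely $O(X^{3/4-\epsilon})$; the only other point requiring care is the reduction of the $\phi^\ngen$ count to the characteristic-function setting of Corollary \ref{cor:counting_red_rings_I}.
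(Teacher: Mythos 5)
Your proposal is correct and follows essentially the same route as the paper: rewrite the left-hand side as $\I_\eta(\phi,\cB;X)-\I_\eta(\phi^{\ngen},\cB;X)$ via $\phi^{\gb}-\phi^{\ngen,\gb}=\phi^{\gen}$, read off the leading constants from Theorems \ref{th_red_main_body_final} and \ref{thm_point_count_generic} together with the cancellation in Lemma \ref{lem:constant_cancel}, and obtain the $O(X^{3/4+o(1)})$ error (and the vanishing of the stray $X^{21/24}$ and $X\log X$ terms) by comparing with the Mellin/contour-shift expansion of $\I_\eta(\phi,\cB;X)$ from \eqref{eq:I_to_Z_Mellin} and the pole locations $1,5/6,3/4$, plus Corollary \ref{cor:counting_red_rings_I}. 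Your write-up merely makes explicit some steps the paper leaves implicit (the structural identity and the coefficient-matching that kills $c$ and $A_1$).
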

\begin{proof}
We begin by writing
\begin{equation*}
\I_\eta(\phi^\gb,\cB;X)-\cI_\eta(\phi^{\ngen,\gb}\cB;X)=\cI_\eta(\phi,\cB;X)-\I_\eta(\phi^\ngen,\cB;X).
\end{equation*}
Combining Theorems \ref{th_red_main_body_final} and \ref{thm_point_count_generic} with Lemma \ref{lem:constant_cancel}, we see that  the leading constants for the power saving expansion of the left hand side of the above equation are as claimed, up to a power saving error. It is thus only required to bound the error term by $O(X^{3/4+o(1)})$. For this, note that the power series expansion of 
$I_\eta(\phi,\cB;X)$ has an error term of $O(X^{3/4+o(1)})$ by using \eqref{eq:I_to_Z_Mellin}, and noting that the location of the poles of $Z(\phi,\cB;s)$ are exactly at the poles of $\xi_i(\phi;s)$, namely at $1$, $5/6$, and $3/4$. Combining this with Corollary \ref{cor:counting_red_rings_I}, we see that the power saving expansion of the right hand side of the above displayed equation has at most terms of magnitude $X\log X$, $X$, and $X^{5/6}$, up to an error of $O(X^{3/4+o(1)})$. The claim therefore follows.
\end{proof}

 Analogously to the definition of $N_\Lambda(\psi,X)$ in \eqref{eq:rings_count}, we define
\begin{equation}\label{eq:all_rings_count}
\wt{N}_{\Lambda}(\psi,X):=\sum_{(Q,C,r)\in \wt{R}(\Lambda)}\frac{1}{|\Aut(Q,C,r)|}\psi\Bigl(\frac{|\Delta(Q)|}{X}\Bigr).
\end{equation}
Then we have
\begin{equation}\label{eq:N_tilde_def}
\wt{N}_\Lambda(\psi,X)=N^{(i)}_\psi(\phi_\Lambda;X).
\end{equation}
We are now ready to prove our main result on counting rings.

\medskip

\noindent{\bf Proof of Theorem \ref{thm:mainS4rings}:} 
We begin by evaluating $\wt{N}_\Lambda(\psi,X)$. From \eqref{eq:N_tilde_def} and \eqref{eq:N_to_xi_Mellin}, we obtain
\begin{equation*}
\begin{array}{rcl}
\wt{N}_\Lambda(\psi,X)&=&\displaystyle
\int_2\xi_i(\phi_\Lambda;s)X^s\wt{\psi}(s)ds
\\[.2in]&=&\displaystyle
\wt{\psi}(1)r_2(\phi_\Lambda;1)X\log X +
\wt{\psi}(1)r(\phi_\Lambda;1)X +
\wt{\psi}(5/6)r(\phi_\Lambda;5/6)X^{5/6}+O(X^{3/4+o(1)}).
\end{array}
\end{equation*}
Therefore, to prove Theorem \ref{thm:mainS4rings}, it suffices to evaluate the constants $r_2(\phi_\Lambda;1)$, $r(\phi_\Lambda;1)$, $r(\phi_\Lambda;5/6)$, and then subtract from $\wt{N}_\Lambda(\psi,X)$ the contribution from non $S_4$-rings (or up to an error of $O(X^{.77})$, nongeneric rings). We note two things: first, to determine the values of these three constants, it suffices to evaluate $\wt{N}_\Lambda(\psi,X)$ for any function $\psi$; and second, in Theorem \ref{thm_red_points_count}, we have evaluated the contribution from nongeneric rings as a power series, with terms $X\log X$, $X$, and $X^{5/6}$, up to an error of $O(X^{3/4+o(1)})$.

Applying Proposition \ref{prop:I_to_N}, we obtain
\begin{equation*}
\begin{array}{rcl}
\wt{N}_\Lambda(\psi,X)=N_{\psi}^{(i)}(\phi_\Lambda;X)
&=&\displaystyle\frac{12}{C_{\mathcal G}A_i}I_{\eta}(\phi_\Lambda,\cB;X);
\\[.15in]
{N}_\Lambda(\psi X)=N_{\psi}^{(i)}(\phi_\Lambda^\gb-\phi_\Lambda^{\ngen,\gb};X)
&=&\displaystyle\frac{12}{C_{\mathcal G}A_i}\bigl(I_{\eta}(\phi_\Lambda^\gb,\cB;X)-I_{\eta}(\phi_\Lambda^{\ngen,\gb},\cB;X)\bigr).
\end{array}
\end{equation*}
In Corollary \ref{cor:I_improved_error}, we have obtained a power series expansion for $I_{\eta}(\phi_\Lambda^\gb,\cB;X)-I_{\eta}(\phi_\Lambda^{\ngen,\gb},\cB;X)$, with an error of $O(X^{3/4+o(1)})$. It is therefore only necessary to verify that the leading constants of the $X$ and $X^{5/6}$ terms of this expansion, multiplied by $12/(C_{\mathcal G}A_i)$, are respectively equal to the leading constants of the $X$ and $X^{5/6}$ terms in Theorem \ref{thm:mainS4rings}. This follows in identical fashion to the case when $\Lambda$ is an $S_4$-family, proved in \cite[\S 8]{ST_second_main_term_1}. $\Box$

\medskip

Finally, we turn to the proof of our main fields counting result.

\medskip

\noindent{\bf Proof of Theorem \ref{th_main_counting_result}:} Let $\Sigma=(\Sigma_v)_{v\in S}$ be a finite collection of local specifications for quartic fields. Let $\wt{F}(\Sigma)$ denote the set of all \'etale quartic algebras satisfying the local specifications of $\Sigma$. (Recall that $F(\Sigma)$ is the set of quartic fields in $\wt{F}(\Sigma)$.) For a smooth function $\psi:\R_{\geq 0}\to\R$ with compact support, we define
\begin{equation}\label{eq:fields_count}
\wt{N}_{\Sigma}(\psi,X):=\sum_{K\in \wt{F}(\Sigma)}\psi\Bigl(\frac{|\Delta(K)|}{X}\Bigr).
\end{equation}
In \cite[(53)]{ST_second_main_term_1}, we prove that we have
\begin{equation}\label{eq:sieve_input}
\wt{N}_{\Sigma}(\psi,X)=\sum_{c\in\{1,5/6\}} \bigl(C_2(c,\psi,\Sigma)\log X+C_1(c,\psi\Sigma)\bigr)X^c+O(X^{13/16+o(1)}),
\end{equation}
for some constants $C_2(c,\psi,\Sigma)$ and $C_1(c,\psi,\Sigma)$.
Moreover (and crucially), these constants can be described by the following limits. Let $\Lambda=(\Lambda_v)_{v\in S}$ be the finite collection of local specifications on quartic rings associated to $\Sigma$. Let $\phi_\Lambda:V(\Z)\to\R$ be the $G(\Z)$-invariant function corresponding to $\Lambda$ as above (note that $\phi$ is periodic). For a squarefree integer $q$ relatively prime to the primes in $S$, let $\phi_{\Lambda,q}$ be the product of $\phi_\Lambda$ with the characteristic function of the set of elements nonmaximal at $q$. Write
\begin{equation*}
N^{(i)}_{\psi}(\phi_{\Lambda_q};X)=\sum_{c\in\{1,5/6\}} \bigl(C_2(c,\psi,\Lambda_q)\log X+C_1(c,\psi\Lambda_q)\bigr)X^c+O(X^{3/4+o(1)}).
\end{equation*}
Then the discussion following \cite[(52)]{ST_second_main_term_1} implies that we have
\begin{equation*}
\begin{array}{rcl}
C_2(c,\psi,\Sigma)=\displaystyle\sum_{q}\mu(q)C_2(c,\psi,\Lambda_q);\quad
C_1(c,\psi,\Sigma)=\sum_{q}\mu(q)C_1(c,\psi,\Lambda_q).
\end{array}
\end{equation*}
Therefore, to prove Theorem \ref{th_main_counting_result}, it only remains to compute the constants $C_2(c,\psi,\Sigma)$ and $C_1(c,\psi,\Sigma)$ via these limits, and then subtract from $\wt{N}_\Sigma(\psi,X)$ the contribution of \'etale algebras which are not $S_4$ quartic fields. We do these two steps simultaneously.

For $i\in\{1,2\}$, write $C_i(c,\psi,\Lambda_q)$ as a sum $C_i^\gen(c,\psi,\Lambda_q)+C_i^\ngen(c,\psi,\Lambda_q)$, where these two summands are the leading constants of the power series expansion of $N_\psi^{(i)}(\phi_{\Lambda_q}^\gen;X)$ and $N_\psi^{(i)}(\phi_{\Lambda_q}^\ngen;X)$, respectively. That is, we have
\begin{equation*}
\begin{array}{rcl}
N^{(i)}_{\psi}(\phi_{\Lambda_q}^\gen;X) &=&\displaystyle
\sum_{c\in\{1,5/6\}} \bigl(C_2^\gen(c,\psi,\Lambda_q)\log X+C_1^\gen(c,\psi\Lambda_q)\bigr)X^c+O(X^{3/4+o(1)});
\\[.15in]
N^{(i)}_{\psi}(\phi_{\Lambda_q}^\ngen;X) &=&\displaystyle
\sum_{c\in\{1,5/6\}} \bigl(C_2^\ngen(c,\psi,\Lambda_q)\log X+C_1^\ngen(c,\psi\Lambda_q)\bigr)X^c+O(X^{3/4+o(1)}).
\end{array}
\end{equation*}
Then for $i\in\{1,2\}$, we have
\begin{equation}\label{eq:final_temp_1}
C_i(c,\psi,\Sigma)=\sum_q \mu(q)C_i(c,\psi,\Lambda_q)
=\sum_q \mu(q)C_i^\gen(c,\psi,\Lambda_q)+\sum_q \mu(q)C_i^\ngen(c,\psi,\Lambda_q).
\end{equation}
The values of $C_i^\gen(c,\psi,\Lambda_q)$ can be read off from Theorem \ref{thm:mainS4rings}, since $N_\psi^{(i)}(\phi_{\Lambda_q}^\gen;X)=N_{\Lambda_q}(\psi,X)$:
\begin{equation}\label{eq:final_temp_2}
C_2^\gen(1,\psi,\Lambda_q)=C_2^\gen(1,\psi,\Lambda_q)=0;\;\;
C_1^\gen(1,\psi,\Lambda_q)=\wt\psi(1)C_1(\Lambda_q);\;\;
C_1^\gen(5/6,\psi,\Lambda_q)=\wt\psi(5/6)C_{5/6}(\Lambda_q).
\end{equation}
Meanwhile, the values of $C_i^\ngen(c,\psi,\Lambda_q)$ can be read off from Theorem \ref{thm_red_points_count}:
\begin{equation*}
C_2^\ngen(1,\psi,\Lambda_q)=C_{1,q}^{\red'};\quad
C_2^\ngen(5/6,\psi,\Lambda_q)=0;\quad
C_1^\ngen(1,\psi,\Lambda_q)=C_{1,q}^{\red};\quad
C_1^\ngen(5/6,\psi,\Lambda_q)=C_{5/6,q}^{\red'}.
\end{equation*}
Now we write $\wt{N}_\Sigma(\psi,X)$ as the sum $N_\Sigma(\psi,X)+N^\red_\Sigma(\psi,X)$, where $N^\red_\Sigma(\psi,X)$ is the contribution from \'etale quartic algebras that are not $S_4$-rings. Since the contribution of $A_4$-fields and abelian fields is bounded by $O(X^{.778...})$, we obtain from Corollary \ref{cor:fields_from_rings_from_fields} that $N^\red_\Sigma(\psi,X)$ is, up to an error of $O(X^{.778...})$, equal to
\begin{equation*}
\Bigl(\sum_q\mu(q) C_2^\ngen(1,\psi,\Lambda_q)\Bigr)X\log X+\Bigl(\sum_q\mu(q) C_1^\ngen(1,\psi,\Lambda_q)\Bigr)X+\Bigl(\sum_q\mu(q) C_1^\ngen(5/6,\psi,\Lambda_q)\Bigr)X^{5/6}.
\end{equation*}
Therefore, combining with \eqref{eq:sieve_input}, \eqref{eq:final_temp_1}, and \eqref{eq:final_temp_2}, we obtain
\begin{equation*}
\begin{array}{rcl}
N_\Sigma(\psi,X)&=&\displaystyle\Bigl(\sum_q\mu(q)C_1(\Lambda_q)\Bigr)X+\Bigl(\sum_q\mu(q)C_{5/6}(\Lambda_q)\Bigr)X^{5/6}+O(X^{13/16+o(1)})
\\[.2in]
&=&C_1(\Sigma)\wt\psi(1)X+C_{5/6}(\Sigma)\wt\psi(5/6)X^{5/6}+O(X^{13/16+o(1)}),
\end{array}
\end{equation*}
as claimed by Theorem \ref{th_main_counting_result}. $\Box$

\subsection*{Acknowledgments}

We are very grateful to Will Sawin, Takashi Tanigcuhi, and Frank Thorne and for helpful conversations and useful comments on an earlier draft. The first-named author was supported by a Simons fellowship and an NSERC discovery grant.

\bibliographystyle{abbrv} \bibliography{Main}
\end{document}